\documentclass[11pt,a4paper,reqno]{amsart}
\usepackage{amsfonts}
\usepackage{amsthm}
\usepackage{amsmath}
\usepackage{amscd}
\usepackage[latin2]{inputenc}
\usepackage{t1enc}
\usepackage[mathscr]{eucal}
\usepackage{indentfirst}
\usepackage{graphicx}
\usepackage{graphics}
\usepackage{pict2e}
\usepackage{dsfont}
\usepackage{epic}
\numberwithin{equation}{section}
\usepackage[margin=2.9cm]{geometry}
\usepackage{epstopdf} 
\usepackage{amssymb}
\usepackage{setspace}
\usepackage{enumerate}
\usepackage{bigstrut}
\usepackage{multirow}
\usepackage{mathtools,xparse}
\usepackage{mathrsfs}
\usepackage{hyperref}
\newtheorem{theorem}{Theorem}[section]
\newtheorem{corollary}[theorem]{Corollary}
\newtheorem{lemma}[theorem]{Lemma}
\newtheorem{proposition}[theorem]{Proposition}
\theoremstyle{definition}
\newtheorem{definition}[theorem]{Definition}
\newtheorem{remark}[theorem]{Remark}
\allowdisplaybreaks
\usepackage{chngcntr}

\def\be{\begin{equation}}
\def\ee{\end{equation}}
\def\bepro{\begin{proposition}}
\def\enpro{\end{proposition}}
\def\belemma{\begin{lemma}}
\def\enlemma{\end{lemma}}
\def\it{\textit}

\newcommand{\E}{\mathop{\mathbb{E}}}
\newcommand{\1}{\mathds{1}}

\newcommand{\R}{\mathbb{R}}
\newcommand{\norm}[1]{\left\lVert#1\right\rVert}

\begin{document}

\title{Hypoelliptic diffusions with singular drifts}

\author{Kyeongsik Nam}

\begin{abstract}
We establish the well-posedness  of  stochastic differential equations possessing  degenerate diffusions and singular drifts. We prove that SDEs defined on the homogeneous Carnot group, whose hypoelliptic diffusion part is given by the \it{horizontal Brownian motion}, admit a unique strong solution for a large class of singular  drifts. It considerably generalizes the classical well-posedness results of  singular SDEs with non-degenerate diffusions. It also provides an intermediate result between the Cauchy-Lipschitz theorem in ordinary differential equations and the result proved by Krylov and R\"ockner \cite{KR}, which states the well-posedness of SDEs with the additive noise and singular drifts. 
\end{abstract}

\address{ Department of Mathematics, Evans Hall, University of California, Berkeley, CA
94720, USA} 

\email{ksnam@math.berkeley.edu}

 \subjclass[2010]{35H10, 35H20, 35R03, 60H05, 60H10.}

 \keywords{Stochastic differential equations,  hypoelliptic operators,  homogeneous Carnot group} 
 
\maketitle

\section{Introduction}
The theory of stochastic differential equations (SDE)
\be \label{SDE1}
\begin{cases}
dX_t=b(t,X_t)dt+\sigma(t,X_t)dB_t, \quad 0\leq t\leq T, \\
X_0=x_0\in \R^d,
\end{cases}
\ee
where $B_t$ denotes a standard Brownian motion, has been central in the probability  theory for a long time due to its wide applications in partial differential equations (PDE). In particular,  there are several interesting connections between SDEs and PDEs in the fluid dynamics. For instance, the 3D incompressible Navier-Stokes equations  can be represented in terms of the  stochastic Lagrangian systems (see \cite{nse}). Also, Rezakhanlou  \cite{re, re2}  established the probabilistic interpretations of a certain class of solutions to the Navier-Stokes equations using the Hamiltonian dynamics approach. Therefore, it is crucial to establish the qualitative properties of solutions to SDEs \eqref{SDE1} for a broad class of coefficients $b$ and $\sigma$ due to its broad applications to other problems.

In particular, the theory of   SDEs \eqref{SDE1} with singular drifts $b$ and the additive noise ($\sigma=\text{Id}$) has been successfully applied to answer the well-posedness questions of PDEs having singular coefficients. For example, Flandoli et al. \cite{gub} showed that for singular vector fields $b$ on $\R^d$ satisfying $b\in L^\infty([0,T],C^\alpha_x)$ and $\text{div}b\in L^p([0,T]\times \R^d)$ with $\alpha\in (0,1), p\in (2,\infty)$, the following stochastically perturbed transport equation: 
\begin{align} \label{transport}
d_tu(t,x)+b(t,x)\cdot Du(t,x)dt + \sum_{i=1}^d e_i\cdot Du(t,x)\circ dB_t^i = 0,\quad u(0,\cdot)=u_0\in L^\infty
\end{align}
($e_i$'s are standard vectors in the Euclidean space and $\circ$ denotes the Stratonovich integral) admits a unique $L^\infty$-weak solution. This result can be interpreted as an regularization effect by the noise since in the absence of randomness, the classical transport equation may have several weak solutions under the same condition on $b$. Authors proceed with the proof by first establishing the fact that SDE \eqref{SDE1} with the additive noise and a  H\"older continuous drift $b\in L^\infty([0,T],C^\alpha)$ admits a unique strong solution possessing a rich regularity. The improved regularity of a solution to SDE \eqref{SDE1} plays a crucial role to obtain the commutator estimate which is a key ingredient to establish the uniqueness of a weak solution to the equation \eqref{transport}.

 In the absence of noise ($\sigma=0$), it is a classical fact in ordinary differential equations (ODE) that Lipschitz continuity of $b(t,\cdot)$ and continuity of $b(\cdot,x)$ ensure the existence and uniqueness of a solution to \eqref{SDE1} (see also \cite{amb, lions} for the theory of Lagrangian flows). On the other hand, in the presence of additive noise ($\sigma = Id$), the well-posedness of SDE \eqref{SDE1} is known for a broader class of drifts $b$. For instance, Veretennikov \cite{ver} proved the well-posedness  for the bounded drifts $b$ in dimension one. Breakthrough was made by Krylov and R\"ockner \cite{KR}: SDE \eqref{SDE1} with the additive noise is well-posed for singular drifts $b$ satisfying
\be \label{KR}
b\in L^q([0,T],L^p(\R^d)), \quad \frac{2}{q}+\frac{d}{p}<1, \ 1<p,q<\infty.
\ee
This is a striking result considering that the well-posedness of SDE can be obtained even when no regularity condition is imposed on the singular and unbounded drift $b$.
The \it{Yamada-Watanabe Principle} \cite{YW,YW1} plays a crucial role in their argument: the existence of a weak solution combined with the uniqueness of a strong solution to \eqref{SDE1} implies the existence of a strong solution and uniqueness of a weak solution to \eqref{SDE1} (see Definition \ref{definition2} and Theorem \ref{a.3} for details). The result \cite{KR} has been successfully extended to the  general type of the multiplicative non-degenerate noise \cite{zhang0, zhang1}. Also, qualitative properties of a solution to singular SDE \eqref{SDE1} with the non-degenerate diffusion have been obtained: for instance, the Sobolev regularity is proved under the assumption \eqref{KR} (see for example \cite{ff2}). We refer to \cite{att,gub2,da, zhang2, malliavin,zhang3} for the further results in this direction.

A natural and important question is whether or not one can establish the well-posedness property of SDE \eqref{SDE1} for a large class of  singular drifts $b$ even when the diffusion coefficient is degenerate. To gain some advantages from the randomness, we require the noise to be hypoelliptic in order that the  regularization effect happens. Consider the following Stratonovich SDE with  smooth coefficients possessing the hypoelliptic diffusion:
\be \label{SDE0}
\begin{cases}
dX_t=b(t,X_t)dt+\sum_{i=1}^m Z_i(X_t) \circ dB^i_t, \\
X_0=x_0.
\end{cases}
\ee 
 Here, $B^i$'s are independent standard one dimensional Brownian motions. The reason why SDEs of the Stratonovich formulation rather than  It\^o formulation \eqref{SDE1} is more suitable to describe the hypoelliptic diffusion is that it has a nice control theoretical interpretation thanks to the Stroock-Varadhan support theorem (see \cite{support}): if we denote $x^h$ by a solution of the following ODE:
\begin{align*}
dx^h_t=b(t,x^h_t)dt+\sum_{i=1}^m Z_i(x^h_t)dh^i,
\end{align*}
then the support of the law of a solution $X_t$ of SDE \eqref{SDE0} is  the closure of a set $\{x^h\,|\,\frac{dh}{dt}\in L^2([0,T],\R^d)\}$ in $C^\alpha$ topology. Here, the support of the law of $X_t$ is regarded as a measure on the path space $C^\alpha$. It is a classical theory that  under the celebrated H\"ormander's condition \cite{hor}, the regularization effect happens in the sense that the law of a solution to SDE \eqref{SDE0} possesses a smooth density. We refer to \cite{hor} for a purely analytical approach and \cite{m} for a probabilistic approach which is known as Malliavin Calculus. There are analogous results when the standard Brownian motion  is replaced with the fractional Brownian motion \cite{fbm1,fbm2}, or a rough path \cite{rough2,rough1}.

 It is  crucial to understand the qualitative properties of SDE \eqref{SDE0} with the hypoelliptic diffusion since it appears naturally in various areas of mathematics such as sub-Riemannian geometry as well as phase space problems. For instance, in the context of the sub-Riemannian geometry, several properties such as Log-Sobolev inequalities and the heat kernel estimates for the hypoelliptic diffusions have been established in \cite{bbbc,bb,bgm,b,dem}. In particular, from the functional inequality associated with the hypoelliptic diffusion, which is called  the \it{generalized curvature-dimension inequalities}, the classical PDE results such as Harnack's inequality and the Li-Yau type inequalities for the hypoelliptic operators can be obtained (see \cite{cd1} for details).
 This  shows that a qualitative theory for the  hypoelliptic diffusions can provide an essential ingredient to study hypoelliptic operators. 
 
  Therefore, it is natural and important to develop a qualitative theory for a broad class of hypoelliptic diffusions on the sub-Riemannian manifold, in particular hypoelliptic diffusions with singular drifts.  The first step to accomplish this is to establish the well-posedness result of SDE \eqref{SDE0} for a large class of singular drifts.  Note that as explained right after in the SPDE \eqref{transport}, the well-posedness result of the hypoelliptic SDE \eqref{SDE0} with singular drifts can also provide a key tool to prove the well-posedness of SPDE with singular coefficients and hypoelliptic diffusions.

Recently, several authors obtained the well-posedness results of SDEs with non-smooth drifts and the degenerate noise. For instance, Zhang \cite{zhang4} considered the phase space problem:  the stochastic Hamiltonian system
\begin{align} \label{phase}
dY_t=V_tdt,  \quad dV_t=F(t,Y_t)dt+\sigma(t,Y_t)dB_t.
\end{align} 
 It is proved in \cite{zhang4} that if a non-smooth drift $F$ possesses some Sobolev regularity, then SDE \eqref{phase} is well-posed. In addition, in \cite{hw}, authors found the conditions on the singular drifts for which the corresponding SDEs with  hypoelliptic diffusions defined on the generalized Heisenberg groups are well-posed. We also refer to  \cite{xz} for other well-posedness results  of SDEs with the non-uniformly  elliptic noise.

 However, only specific classes of SDEs are considered in all of the previously mentioned works. For example, in the case of phase space problem \eqref{phase} studied in \cite{zhang4}, the drift of a process $X_t=(Y_t,V_t)$  is of a very particular form. Also, in \cite{hw}, the underlying space on which the singular SDEs are considered is only a special form of the general Lie groups. Therefore, these results are too special to be regarded as a general theory on the well-posedness of singular SDEs with the  degenerate diffusions.

 In this paper, we provide a large class of singular drifts $b$ for which we can ensure the existence and uniqueness of a strong solution to the SDE \eqref{SDE0} with  hypoelliptic diffusions. We consider the SDEs with singular drifts  on the \it{homogeneous Carnot group} (see Section \ref{section 3} for the definition), which is an example of the sub-Riemannian manifolds. We assume that vector fields $Z_1,\cdots,Z_m$ in SDE \eqref{SDE0} are left-invariant and form a basis of the first layer of stratified Lie algebra. In the terminology of sub-Riemannian geometry, the diffusion part of SDE \eqref{SDE0} is called the \it{horizontal Brownian motion}, and it is hypoelliptic.  We prove that if a singular drift $b$ belongs to the suitable mixed-norm parabolic Sobolev spaces associated with the vector fields generating  randomness, then SDE \eqref{SDE0} is locally well-posed (see Theorem \ref{main}). Briefly speaking, we impose some regularity and integrability conditions on the drift. The conditions are expressed in terms of  \it{nilpotency} and \it{homogeneous dimension} of the group, respectively (see the condition \eqref{condition2}  for details).  
Surprisingly,  this result completely recovers the classical well-posedness result of  SDEs with the additive noise and singular drifts \cite{KR}, and considerably generalizes the previously known results (see Section \ref{section 2} for details). 
 
 To the author's knowledge, the main result of this paper provides the first well-posedness result for singular SDEs with the degenerate diffusion in a great generality. Therefore, the main result of this paper can be regarded as a starting point to study the qualitative properties of general SDEs with hypoelliptic diffusions and singular drifts. For instance, many questions about the smooth hypoelliptic diffusions that arise in the sub-Riemannian geometry mentioned before can also be recast in the context of the hypoelliptic diffusions with singular drifts.

We should point out that the main result in this paper provides a  beautiful intermediate result between the well-posedness result in the classical ODE theory (Cauchy-Lipschitz theorem) and the well-posedness result of SDEs with the additive noise \cite{KR} mentioned above. Roughly speaking,  as the noise gets degenerate in the sense that a nilpotency of the homogeneous Carnot group converges to infinity, the conditions imposed on the singular drift formally become the Lipschitzness condition. On the other hand, if the homogeneous Carnot group is just a Euclidean space $\R^d$  and vector fields generating the randomness are the standard vector fields on $\R^d$ (which corresponds to the additive noise case), then Theorem \ref{main} covers the classical result of Krylov and R\"ockner \cite{KR}. More details about these interpretations can be found in Section \ref{section 2}. 

The structure of the paper is as follows. We state the main theorem and provide a brief outline of the proof in Section \ref{section 2}. In Section \ref{section 3}, we  briefly overview the theory of analysis on the nilpotent Lie group. Also, as applications of the classical Calder\'on-Zygmund theory and the theory of subelliptic operators \cite{f2,f3}, we establish the well-posedness of the parabolic equation involving the sub-Laplacian. In Section \ref{section 4}, we study the Kolmogorov equation  associated with the SDE \eqref{SDE0}, which is a key ingredient to prove the uniqueness of a strong solution to SDE \eqref{SDE0}. In Section \ref{section 5}, we establish the Krylov-type estimates and then prove the It\^o's formula for  singular functions. Finally, applying the Zvonkin's transformation method \cite{zvo}, we conclude  the proof of the main theorem  in Section \ref{section 6}.

Throughout this paper, $\nabla=(\frac{\partial}{\partial x_1},\cdots, \frac{\partial}{\partial x_N})$ denotes the standard gradient on the Euclidean space $\R^N$. $B_t$ denotes the standard Brownian motion on a filtered space $(\Omega, \mathcal{F}, \mathcal{F}_t, P)$ with the filtration $\mathcal{F}_t=\sigma\{B_r|0\leq r\leq t\}$. Also, we say that for functions $f$ and $g$ defined on $\R \times \mathbb{G}$, $f$ is a version of $g$ provided that $f=g$ for $(t,x)$-a.e. Furthermore, $f\in C_b$ means that $f$ is a bounded and continuous function. Finally, for a matrix $A$, $\norm{A}$ denotes the Hilbert-Schmidt norm. 

\section{Main results} \label{section 2}
Consider the homogeneous Carnot group $\mathbb{G}=(\R^N,\circ,D(\lambda))$, where $\{D(\lambda)\}_{\lambda>0}$ denotes the dilation structure. Assume that $\mathbb{G}$ has a homogeneous dimension $Q$, nilpotency $r$, and $Z_i$'s ($1\leq i\leq m$) are left invariant vector fields that form a basis of the first layer of the Lie algebra $\mathfrak{g}$ (see Section \ref{section 3} for details). 

 Let us consider the following Stratonovich SDE on the homogeneous Carnot group $\mathbb{G}$:
\be \label{SDE}
\begin{cases}
dX_t=b(t,X_t)dt+\sum_{i=1}^m Z_i(X_t) \circ dB^i_t, \\
X_0=x_0.
\end{cases}
\ee 
Here, $\mathbb{G}$ is identified with $\R^N$ and $Z_i$'s ($1\leq i\leq m$) are regarded as vector fields on $\R^N$.
Also, suppose that two exponents $p$ and $q$ satisfying
\be \label{pq}
\frac{2}{q}+\frac{Q}{p}<1, \quad 1<p,q<\infty,
\ee
are given, and that the drift $b$ satisfies
\be \label{condition1}
b\in \text{span}\{Z_1,Z_2,\cdots Z_m\}, \\
\ee
\be \label{condition2}
Z_I b^i \in L^q([0,T],L^p(\mathbb{G}))\quad \text{for} \quad 1\leq i\leq m,\ |I|\leq r-1.\\
\ee
Here, $Z_If$ denotes $Z_{i_1}\cdots Z_{i_k}f$ (distribution derivatives) for a multi-index $I=(i_1,\cdots,i_k)$, $1\leq i_1,\cdots,i_k\leq m$, and $b^i$'s ($1\leq i\leq m$) are given by the expression:
\begin{align*}
b=\sum_{i=1}^m b^iZ_i.
\end{align*} Our main result Theorem \ref{main} below claims that one can  construct a (unique) solution to SDE \eqref{SDE} for a broad class of singular drifts $b$.

\begin{theorem} \label{main}
Let $(\mathbb{G},\circ,D(\lambda))$ and $\{Z_i| 1\leq i\leq m\}$ be as above. Assume that a singular drift  $b$ satisfies the conditions \eqref{condition1} and \eqref{condition2} for the exponents $p,q$ satisfying \eqref{pq}. Then, for some open set $U$ containing $x_0$, there exists a unique strong solution $X_t$ to SDE \eqref{SDE} before the time at which $X_t$ exits $U$.
\end{theorem}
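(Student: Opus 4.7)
The plan is to carry out Zvonkin's transformation, as previewed by the author: construct a near-identity, $(t,x)$-dependent change of coordinates $\Phi$ that absorbs the singular drift, apply the It\^o formula to $Y_t:=\Phi(t,X_t)$, and deduce well-posedness of \eqref{SDE} from that of the transformed SDE. The global shape of the proof is then: solve a backward Kolmogorov PDE to produce $\Phi$ (Sections \ref{section 3}--\ref{section 4}); verify an It\^o formula for the resulting singular $u$ via Krylov estimates (Section \ref{section 5}); read off strong well-posedness through the Yamada--Watanabe principle.

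First I would introduce the sub-Laplacian $\mathcal{L}=\tfrac{1}{2}\sum_{i=1}^m Z_i^2$ generating the horizontal Brownian motion, and look, for $\lambda>0$ to be chosen large, for a vector-valued solution $u\colon[0,T]\times\mathbb{G}\to\R^N$ of
\be
\partial_t u+\mathcal{L}u+\sum_{i=1}^m b^iZ_iu-\lambda u=-b,\qquad u(T,\cdot)=0.
\ee
Combining the parabolic theory for $\partial_t-\mathcal{L}$ built in Section \ref{section 3} with the hypothesis $Z_Ib^i\in L^q([0,T],L^p(\mathbb{G}))$ for every $|I|\leq r-1$, I would establish that this PDE has a (unique) solution whose horizontal derivatives $Z_Iu$ for $|I|\leq r$ belong to suitable mixed-norm spaces. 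Because the family $\{Z_I:|I|\leq r\}$ spans the entire tangent space of $\mathbb{G}$ (nilpotency $r$), the subcriticality $\frac{2}{q}+\frac{Q}{p}<1$ upgrades this to $\nabla u\in C_b$ with $\|\nabla u\|_\infty\to0$ as $\lambda\to\infty$. For $\lambda$ large enough, the map $\Phi(t,x):=x+u(t,x)$ is thus a uniform bi-Lipschitz homeomorphism of $\R^N$.

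Next I would apply the generalized It\^o formula developed in Section \ref{section 5} to $Y_t=\Phi(t,X_t)$ for any weak solution $X$ of \eqref{SDE}. The Kolmogorov equation makes the singular term $b\,dt$ cancel, producing
\be
dY_t=\lambda u\bigl(t,\Phi^{-1}(t,Y_t)\bigr)\,dt+\widetilde\sigma(t,Y_t)\circ dB_t,
\ee
with $\widetilde\sigma_i(t,y)=(I+\nabla u)(t,\Phi^{-1}(t,y))\,Z_i(\Phi^{-1}(t,y))$ continuous and $\lambda u\circ\Phi^{-1}$ bounded and Lipschitz. Pathwise uniqueness for this transformed system reduces to an essentially classical argument because its coefficients have precisely the regularity the original drift lacked. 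Weak existence for \eqref{SDE} is then obtained by approximating $b$ with smooth drifts and passing to the limit using the Krylov-type bounds of Section \ref{section 5}. The Yamada--Watanabe principle (Theorem \ref{a.3}) yields a unique strong solution of the transformed SDE, and pulling this back through $\Phi^{-1}$ returns the unique strong solution of \eqref{SDE}.

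The main obstacle, in my view, is securing $\nabla u\in L^\infty$ in the PDE step. The drift $b=\sum b^iZ_i$ and its transport operator $\sum b^iZ_i$ are purely horizontal, so the Kolmogorov equation naturally delivers only horizontal regularity for $u$; yet It\^o's formula in Euclidean coordinates needs the full gradient. One is therefore forced to differentiate the equation iteratively along $Z_1,\dots,Z_m$ up to order $r$, and to control each commutator term $Z_J(b^iZ_iu)$ with $|J|\leq r-1$ --- this is exactly where \eqref{condition2} is used in a sharp way, and where \eqref{pq} kicks in to convert the resulting mixed-norm bounds into a sup-norm bound on $\nabla u$. Localization to a precompact neighbourhood $U\ni x_0$ (needed because the contraction argument only closes after $b$ is cut off outside $U$) is then a standard stopping-time argument and produces the local statement of Theorem \ref{main}.
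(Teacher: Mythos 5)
Your outline follows the same Zvonkin route as the paper (Kolmogorov PDE for $u$, $\Phi=\mathrm{id}+u$, It\^o formula via Krylov estimates, Yamada--Watanabe), but it has a genuine gap at the uniqueness step for the transformed SDE. You assert that pathwise uniqueness for $Y_t=\Phi(t,X_t)$ ``reduces to an essentially classical argument because its coefficients have precisely the regularity the original drift lacked.'' That is not the case: the Zvonkin transform regularizes the \emph{drift}, but the new diffusion coefficient $\tilde{\sigma}_i(t,y)=(Z_i+Z_iu)(t,\Phi^{-1}(t,y))$ is \emph{not} Lipschitz. The PDE step only gives $u\in\tilde{S}^{r+1,(q,p)}$, hence sup-norm control of $Z_Iu$ for $|I|\le r$; the Euclidean gradient of $Z_iu$ lies merely in $L^q_tL^p_x$ (locally), so $\tilde\sigma(t,\cdot)$ has only Sobolev regularity and a Gronwall argument on $|Y^1_t-Y^2_t|$ does not close classically. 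This is exactly why the paper needs Lemma \ref{4.4}: one controls $\int_0^t |\tilde{\sigma}(s,Y^1_s)-\tilde{\sigma}(s,Y^2_s)|^2/|Y^1_s-Y^2_s|^2\,ds$ through the pointwise bound $|f(x)-f(y)|\le C|x-y|(\mathcal{M}\nabla f(x)+\mathcal{M}\nabla f(y))$, applies the Krylov-type estimate (Proposition \ref{4.0}, Remark \ref{remark 5.3}) to $|\mathcal{M}\nabla(\cdot)|^2$, and invokes the exponential-moment Lemma \ref{a.2}, before estimating $\E\,e^{-A_t}|Y^1_t-Y^2_t|^a$. Without this maximal-function/exponential-integrability ingredient (the Fedrizzi--Flandoli/Krylov--R\"ockner mechanism), your uniqueness step has no proof.

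A second, smaller but conceptually important point: your claim that $\Phi$ is a uniform bi-Lipschitz homeomorphism of all of $\R^N$ is not justified when $r>1$. Writing $\partial_{x_j}$ as combinations of commutators of $Z_1,\dots,Z_m$ involves coefficients that grow polynomially in $x$, so the uniform smallness of $\norm{Z_Iu}_{L^\infty}$, $|I|\le r$, yields $\norm{\nabla u}_{L^\infty}\le \tfrac12$ only on bounded sets; the paper's Proposition \ref{3.5} accordingly produces only a local $C^1$ diffeomorphism on an open set $\Omega\ni x_0$, and this --- not merely the cutoff of $b$ --- is the reason Theorem \ref{main} is local and why exit-time stopping enters the argument. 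Two further points to tighten: the transformed drift must carry the It\^o correction $\tfrac12\sum_i Z_i'Z_i$ (or you must justify a Stratonovich chain rule for the non-$C^2$ map $\Phi$, which the paper avoids by proving the It\^o-form formula, Theorem \ref{A.1}); and weak existence is obtained in the paper directly from the continuity of a version of $b$ (Remark \ref{remark1.3}, since $(r-1)p>Q$) together with the classical existence Theorem \ref{4.5} after a cutoff, which is simpler than your proposed smooth-approximation scheme and avoids an unaddressed tightness/limit-identification step.
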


Note that since Theorem \ref{main} is a local statement, the condition \eqref{condition2} can be replaced by the weaker local $L^p$ spaces:
\begin{align*}
Z_I b^i \in L^q([0,T],L^p_{\text{loc}}(\mathbb{G}))\quad \text{for} \quad 1\leq i\leq m,\ |I|\leq r-1.
\end{align*}

Remarkably, Theorem \ref{main} provides a  beautiful intermediate well-posedness result between  ODE case (absence of the randomness) and SDE with the additive noise case \cite{KR}.  Recall that as explained in the introduction,   SDE with an additive noise ($\sigma=Id$):
\begin{align} \label{additive}
dX_t=b(t,X_t)dt+dB_t,\quad X_0=x_0,
\end{align}
admits a unique strong solution if a singular drift $b$ satisfies the condition \eqref{KR}.
This result  can be regarded as a special case of  Theorem \ref{main}. In fact, if we consider the homogeneous Carnot group given by
\begin{align*}
(\mathbb{G},\circ)=(\R^d,+), \quad Z_i=\frac{\partial}{\partial x_i}\ (1\leq i\leq d),
\end{align*}
with a standard dilation structure, then the homogeneous dimension  $Q$ is   $d$  and nilpotency $r$ is 1. Also, SDE \eqref{SDE} becomes the additive noise SDE. Therefore, by comparing \eqref{KR} with the conditions \eqref{pq}-\eqref{condition2}, Theorem \ref{main} can be regarded as a considerable generalization of the well-posed result of singular SDEs from the non-degenerate diffusion case $r=1$ to the  degenerate diffusion cases $r>1$. 

In addition, one can formally check that in the  limit $r\rightarrow \infty$, Theorem \ref{main}  covers the classical well-posed result in the ODE theory.  Note that if we write $W^{k,p}$ for the standard Sololev spaces and $S^{k,p}$ for the Sobolev spaces with respect to vector fields $\{Z_i| 1\leq i\leq m\}$: 
\begin{align*}
S^{k,p}(\mathbb{G}):=\{f|Z_If \in L^p(\mathbb{G}), |I|\leq k\},
\end{align*}
then for $1<p<\infty$, the following relation holds:
\begin{align} \label{local embedding}
W^{k,p}_{\text{loc}} \subset S^{k,p}_{\text{loc}} \subset W^{k/r,p}_{\text{loc}}
\end{align} 
 (see \cite[Theorem 2]{f3}).
 Also, it is obvious that under the conditions \eqref{condition1} and \eqref{condition2}, each Euclidean coordinate of a singular drift $b$ belongs to the space $L^q([0,T],S^{r-1,p}_{\text{loc}})$.  
 Thus, using this fact and the relation \eqref{local embedding}, one can conclude that
\begin{align*}
b\in L^q([0,T],W^{\frac{r-1}{r},p}_{\text{loc}}).
\end{align*}
 Thus, if the noise becomes more degenerate in the sense that $r \rightarrow \infty$, it follows that $Q\rightarrow \infty$, and thus we have
\begin{align*}
\frac{r-1}{r} \rightarrow 1, \quad p \rightarrow \infty,
\end{align*}
due to the condition \eqref{pq}. Since the space $L^\infty([0,T],W^{1,\infty}_x)$ is the class of drifts for which the corresponding ODE
\begin{align*}
x'(t)=b(t,x(t)), \quad x(0)=x_0
\end{align*}
is well-posed, the formal limit $r\rightarrow \infty$ in Theorem \ref{main} covers the classical well-posedness result in the ODE theory.

\begin{remark}
In the terminology of the sub-Riemannian geometry, the condition \eqref{condition1} means that the vector field $b$ belongs to the \it{horizontal distribution} which is  the linear span of $Z_1,\cdots,Z_m$. It is completely non-integrable, and arbitrary two points can be connected by a horizontal path according to the Chow-Rashevskii theorem.  A stochastic process whose generator is $\frac{1}{2}\sum_{i=1}^m Z_i^2$ is called the \it{horizontal Brownian motion}. Theorem \ref{main} claims that if we perturb the horizontal Brownian motion by a certain class of singular drifts contained in the horizontal distribution, then it is still locally well-defined. We refer to \cite{mont} for the monograph of the sub-Riemannian geometry.
\end{remark}

\begin{remark} \label{remark1.3}
Since Theorem \ref{main} is a local statement, throughout the paper, we  assume that each $b^i$ has a compact support, which is  uniform in $t$.
Note that in the case $r=1$, we have $(\mathbb{G},\circ)=(\R^N,+)$ and $Z_i = \frac{\partial}{\partial x_i}$ for $1\leq i\leq N$, which corresponds to the additive noise case, and this case is considered  in \cite{KR}.  Therefore, throughout this paper, we only consider the case $r>1$ which corresponds to the  degenerate diffusion case. 

 The condition  \eqref{condition2} implies that $b^i(t,\cdot)\in S^{r-1,p}(\mathbb{G})$ for $t$-a.e. Since $(r-1)p \geq p > Q$ under the condition \eqref{pq} and $r>1$, according to the Sobolev embdding Theorem \ref{2.8}, there is a version of $b$ such that $b(t,\cdot)$ is continuous for $t$-a.e. In this paper, we prove Theorem \ref{main} for such drifts.
\end{remark}

We provide a brief outline of the proof of Theorem \ref{main}. According to the Yamada-Watanabe Principle (see Theorem \ref{a.3}), it suffices to prove the existence of a weak solution and uniqueness of a strong solution. Weak existence immediately follows from the continuity of coefficients of SDE \eqref{SDE} (see Theorem \ref{4.5}), and we partially follow the argument in \cite{ff3,KR} to prove the strong uniqueness. The essential ingredient to show the uniqueness of a strong solution is  a nice estimate on a solution $u$  to the Kolmogorov PDE:
\begin{align} \label{kol}
\begin{cases}
u_t +\frac{1}{2} Lu+\sum_{i=1}^m b^iZ_iu+\lambda u=f , \quad 0\leq t\leq T,\\
u(T,x)=0,
\end{cases}
\end{align}
where $L$ denotes the \it{sub-Laplacian} on $\mathbb{G}$ defined by $L:=\sum_{i=1}^m Z_i^2$. This Kolmogorov PDE \eqref{kol} appears when we apply the Zvonkin's transformation method \cite{zvo} to obtain an auxiliary SDE possessing a more regular drift. The key equation to  study PDE \eqref{kol}  is the following PDE:
\begin{align}  \label{heat equation} 
\begin{cases}
u_t - \frac{1}{2} Lu=f , \quad 0\leq t\leq T,\\
u(0,x)=0.
\end{cases}
\end{align}
Since $L$ is not elliptic and we are working with the mixed-norm parabolic Sobolev spaces of type \eqref{condition2}, we need to develop a new theory on PDE \eqref{heat equation}. We accomplish this by employing some tools from the harmonic analysis. In particular,   making use of the techniques to study subelliptic equations \cite{f2,f3} and applying the Calder\'on-Zygmund theory, we establish the well-posedness result of PDE \eqref{heat equation} in the suitable Sobolev spaces of type \eqref{condition2}. This is done in Section \ref{section 3}.

Next, we establish the well-posedness of the Kolmogorov PDE \eqref{kol} possessing singular coefficients.  The key ingredient is the Sobolev embedding theorem for the mixed-norm parabolic Sobolev spaces  (see Appendix \ref{section b}). Combining this with the result of PDE \eqref{heat equation}, we obtain a priori estimate of a solution to PDE \eqref{kol}. The well-posedness result  of  parabolic equations \eqref{kol}, involving the sub-Laplacian and singular coefficients, in the class of  mixed-norm parabolic Sobolev spaces   is also one of the main achievements of the paper. This is done in Section \ref{section 4}.

The next essential step to show the uniqueness of a strong solution to SDE \eqref{SDE} is to derive an auxiliary  SDE  from the original SDE \eqref{SDE}, which is called the Zvonkin's transformation method \cite{zvo}. This auxiliary SDE is more tractable than the original SDE \eqref{SDE} since it possesses a more regular drift. It is obtained by applying the It\^o's formula to a function $u$  which is a solution to the Kolmogorov PDE \eqref{kol}. 
 However, in our setting, it is not obvious to apply the It\^o's formula to  SDE \eqref{SDE} and a function $u$ since $u$ is not as regular. In order to overcome this problem, we establish the It\^o's formula for singular functions in Section \ref{section 5}. The main ingredient to prove this is a Krylov-type estimate  \cite{krylov}. It involves technical difficulties since we are working on the homogeneous Carnot group and SDE \eqref{SDE} involves the degenerate diffusion (see Section \ref{section 5}). 

Once the It\^o's formula for non-smooth functions is established, with the aid of an auxiliary SDE mentioned above, one can finally prove the uniqueness of a strong solution to the SDE \eqref{SDE}. We show this by controlling a difference of two strong solutions to SDE \eqref{SDE}. This is done in Section \ref{section 6}.

\section{Analysis on the nilpotent Lie group} \label{section 3}
In this section, we briefly overview the theory of analysis on the homogeneous Carnot group. We also introduce the mixed-norm parabolic Sobolev spaces and obtain their crucial properties. In particular, we establish the well-posedness result of  parabolic equations in parabolic Sobolev spaces.

\subsection{Preliminaries : homogeneous Carnot group} \label{section 3.1}
\begin{definition} \label{definition}
We say that $\mathbb{G}=(\R^N,\circ,D(\lambda))$, endowed with a Lie group structure by the composition law $\circ$, is called a \it{homogeneous group} if it is equipped with a one parameter family $\{D(\lambda)\}_{\lambda>0}$ of automorphisms of the following form
\begin{align*}
D(\lambda):(u_1,u_2,\cdots, u_N) \rightarrow (\lambda^{\alpha_1} u_1, \lambda^{\alpha_2} u_2, \cdots, \lambda^{\alpha_N} u_N)
\end{align*}
for some exponents $0<\alpha_1\leq \cdots \leq \alpha_N$. \it{Homogeneous dimension} $Q$ of $\mathbb{G}$ is defined by $Q=\alpha_1+\cdots +\alpha_N$. For $1\leq i\leq N$, let $Z_i$ be a left-invariant vector field which coincides with $\frac{\partial}{\partial x_i}$ at the origin. If Lie algebra generated by $Z_1,\cdots,Z_m$, which are 1-homogeneous left-invariant vector fields, is the whole Lie algebra $\mathfrak{g}$ of $\mathbb{G}$, then $\mathbb{G}=(\R^N,\circ,D(\lambda))$ is called a \it{homogeneous Carnot group}.
\end{definition}

If $\mathbb{G}$ is a homogeneous Carnot group, then its Lie algebra $\mathfrak{g}$ has a natural stratification (see \cite[Chapter 2.2]{sub} for details). In fact, if $\alpha_j=1$ for $j\leq m$ and  \begin{gather*}
V_1 = \text{span} (Z_1,Z_2,\cdots,Z_m),\\
V_{i+1}=[V_i,V_1],\ i>1,
\end{gather*}
 then  there exists $r$, which is called  a \it{nilpotency} of $\mathbb{G}$, satisfying
\begin{align*}
\mathfrak{g}=V_1 \oplus V_2 \oplus \cdots \oplus V_r.
\end{align*}

We assume for a moment that $\mathbb{G}=(\R^N,\circ,D(\lambda))$ is a homogeneous group with a homogeneous dimension $Q$. One can associate the \it{homogeneous norm} $\norm{\cdot}:\mathbb{G}\rightarrow \R$ to $\mathbb{G}$, smooth away from the origin, satisfying
\begin{gather*}
\norm{u} \geq 0, \quad \norm{u}=0 \Longleftrightarrow u=0, \quad \norm{D(\lambda)u}=\lambda\norm{u}.
\end{gather*}
If we denote $|\cdot|$ by a Euclidean norm, then $|x|=\mathcal{O}(\norm{x})$ as $x\rightarrow 0$. The Lebesgue measure on $\mathbb{G}=\R^N$ is a bi-invariant haar measure,  and once we make a change of coordinate $x=D(\lambda)y$,  we have $dx=\lambda^Q dy$. This implies that the homogeneous group $\mathbb{G}$ can be regarded as a homogeneous space in the sense of Coifman and Weiss \cite{cw}. This fact plays an important rule in developing a singular integral theory on the homogeneous group. 

Now, let us define the kernels of type $\alpha$ and the operators of type $\alpha$.  A function $f$ is said to be  \it{homogeneous of degree} $\alpha$ provided that for $\lambda>0$,
\begin{align*}
f(D(\lambda)x)=\lambda^\alpha f(x).
\end{align*}
\begin{definition}\cite[p. 917]{f3} \label{2.2}
$K$ is called a \it{kernel of type} $\alpha$ ($\alpha>0$) if it is smooth away from the origin and homogeneous of degree $\alpha
-Q$.
Also, $K$ is called a \it{singular integral kernel} if it is smooth away from the origin, homogeneous of degree $-Q$, and satisfies
\begin{align*}
\int_{a<|x|<b} K(x)dx=0
\end{align*}
for any $0<a<b<\infty$.   $T$ is called the \it{operator of type $\alpha$} ($0\leq \alpha<Q$) if $T$ is given by
\begin{align*}
T:f \rightarrow f*K
\end{align*} 
for some kernel $K$ of type $\alpha$. In the case $\alpha=0$, convolution is understood as a principal value sense.
\end{definition}

From now on, we assume that $\mathbb{G}=(\R^N,\circ,D(\lambda))$ is a homogeneous Carnot group with a homogeneous dimension $Q$ and nilpotency $r$. Recall that $Z_1,\cdots,Z_m$ is a (linear) basis of $V_1$, and define the \it{sub-Laplacian} $L$ by 
\begin{align*}
L=Z_1^2+\cdots+Z_m^2.
\end{align*}
According to the result by Folland \cite[Theorem 2.1]{f2}, there exists a fundamental solution of $\frac{\partial}{\partial t}-L$, which is called the heat kernel.
It turns out that the heat kernel $p$ possesses  a nice Gaussian upper bound.
\begin{theorem}\cite[Theorem 1]{jerison} \label{2.4}
For any $k\geq 0$ and indices $I=(i_1,\cdots,i_s)$ with $|I|=s \geq 0$,
\begin{align} \label{heat}
|\partial_t^k Z_Ip(t,x)| \leq C(k,I)t^{-k-\frac{s+Q}{2}} e^{-c\norm{x}^2/t}
\end{align}
holds for some constant $c$ independent of $k,s,i_1,\cdots,i_s$.
\end{theorem}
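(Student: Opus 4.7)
The plan is to combine scaling from the dilations of $\mathbb{G}$, ultracontractivity of the heat semigroup, and Davies' perturbation trick for the off-diagonal Gaussian factor. First I would exploit homogeneity: since $L$ is $2$-homogeneous under $D(\lambda)$ and each $Z_j$ in the first layer is $1$-homogeneous, uniqueness of the fundamental solution of $\partial_t - L/2$ forces $p(t,x) = t^{-Q/2}\, p(1, D(t^{-1/2})x)$, and differentiating yields
\begin{equation*}
\partial_t^k Z_I p(t,x) \,=\, t^{-k-(Q+s)/2}\,(\partial_t^k Z_I p)\bigl(1, D(t^{-1/2})x\bigr), \qquad s=|I|.
\end{equation*}
Because $\norm{D(t^{-1/2})x} = t^{-1/2}\norm{x}$, the target estimate \eqref{heat} reduces to proving, for some fixed $c > 0$ independent of $k$ and $I$,
\begin{equation*}
|\partial_t^k Z_I p(1,y)| \,\leq\, C(k,I)\,e^{-c\norm{y}^2}.
\end{equation*}

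Next, to get the Gaussian bound on $p(1,\cdot)$ itself, I would run Davies' perturbation method. The Folland--Stein embedding $S^{1,2}(\mathbb{G}) \hookrightarrow L^{2Q/(Q-2)}(\mathbb{G})$ yields a Nash inequality, from which the standard semigroup argument produces the ultracontractive bound $\|e^{tL/2}\|_{L^1\to L^\infty} \leq C\,t^{-Q/2}$. For any smooth $\psi$ with $\sum_i (Z_i \psi)^2 \leq 1$, the twisted semigroup $e^{\psi}\, e^{tL/2}\, e^{-\psi}$ is bounded on $L^2$ by $e^{t/2}$, which one sees by differentiating $\|e^{\psi} e^{sL/2} e^{-\psi} f\|_2^2$ in $s$ and integrating by parts against each $Z_i$. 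Composing with ultracontractivity gives $\|e^{\psi} e^{tL/2} e^{-\psi}\|_{L^1 \to L^\infty} \leq C\, t^{-Q/2}\, e^t$. Choosing $\psi(z) := \alpha\,\varphi(z)$ for a smooth horizontally $1$-Lipschitz approximant $\varphi$ of the Carnot--Carath\'eodory distance $d_{CC}(\cdot, y)$, evaluating the resulting kernel bound against $\delta_0$ and $\delta_y$, and optimizing in $\alpha > 0$ produce
\begin{equation*}
p(1,y) \,\leq\, C \exp\bigl(-c\, d_{CC}(0,y)^2\bigr) \,\leq\, C'\exp\bigl(-c'\norm{y}^2\bigr),
\end{equation*}
using the well-known comparability of $d_{CC}(0,\cdot)$ with the homogeneous norm $\norm{\cdot}$ on a homogeneous Carnot group.

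For the derivatives I would use subelliptic parabolic regularity. Fix $y$ with $\norm{y}\geq 2$, take a smooth cutoff $\chi$ supported in the unit ball about $y$ with $\chi \equiv 1$ on the half-sized ball, and set $v := \chi\, p$. Then $(\partial_t - L/2)v$ is supported on the annulus, where the Gaussian bound of the previous step already controls $p$ and finitely many horizontal derivatives pointwise. Interior hypoelliptic estimates for $\partial_t - L/2$ in the style of Folland~\cite{f2} then convert this $L^p$ control on the forcing into pointwise control of $\partial_t^k Z_I v(1,y) = \partial_t^k Z_I p(1,y)$, preserving a Gaussian factor $e^{-c''\norm{y}^2}$ with a constant that may be slightly smaller than $c$. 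For $\norm{y} \leq 2$ the bound is automatic from the smoothness of $p$ away from $(0,0)$, which is part of Folland's parametrix construction.

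The main obstacle will be the Davies step. Ultracontractivity is painless once one has the Folland--Stein embedding, but the perturbation argument hinges on the existence of a smooth horizontally $1$-Lipschitz approximant of $d_{CC}(\cdot, y)$. Constructing such a function uses Chow--Rashevskii (to know the CC distance is finite and locally comparable to $\norm{\cdot \circ y^{-1}}$) combined with a left-invariant mollification scaled by the dilations $D(\lambda)$, so that horizontal derivatives remain uniformly bounded. With that analytic input in hand, the scaling and derivative steps are essentially bookkeeping.
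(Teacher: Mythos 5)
The paper offers no proof of this statement at all: Theorem \ref{2.4} is quoted directly from Jerison and S\'anchez-Calle \cite{jerison}, so your argument is necessarily a different route, and it is a legitimate, standard one for the group case. The reduction to $t=1$ is correct, since $\partial_t^k Z_I p$ inherits parabolic homogeneity of degree $-(Q+s+2k)$ from the uniqueness and $2$-homogeneity of the heat kernel; the on-diagonal bound via the Folland--Stein embedding and a Nash iteration, the off-diagonal Gaussian factor via Davies' perturbation in the Carnot--Carath\'eodory metric, the comparability of $d_{CC}(0,\cdot)$ with $\norm{\cdot}$ by $1$-homogeneity and compactness, and the passage to derivatives by interior hypoelliptic estimates with a uniform loss in the exponent (which is what keeps $c$ independent of $k$ and $I$, as the theorem requires) are all sound. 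This is essentially the treatment in Varopoulos--Saloff-Coste--Coulhon \cite{group} specialized to stratified groups, whereas Jerison--S\'anchez-Calle prove the estimate for general H\"ormander vector fields with no dilation structure; your route is more self-contained for the setting the paper actually uses, while the citation buys generality and brevity.

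Two steps should be tightened. First, the twisted $L^1\to L^\infty$ bound is not obtained by literally composing the untwisted ultracontractivity with the $L^2$ bound $e^{t/2}$: one must rerun the Nash (or logarithmic Sobolev) iteration for the perturbed semigroup $e^{\psi}e^{tL/2}e^{-\psi}$, absorbing the zeroth-order terms coming from $Z_i\psi$, which yields a bound of the form $Ct^{-Q/2}e^{c\alpha^2 t}$; this is the actual content of Davies' method rather than a consequence of the two separate bounds. Second, your cutoff argument for the derivative estimates is circular as written: the forcing $(\partial_t-\tfrac12 L)(\chi p)$ contains terms $Z_i\chi\, Z_i p$, so controlling it pointwise on the annulus presupposes bounds on horizontal derivatives of $p$, which is what is being proved. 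The standard fix is to apply interior subelliptic parabolic estimates directly to $p$, which solves the homogeneous equation on a unit parabolic cylinder around $(1,y)$, so that every $\partial_t^k Z_I p(1,y)$ is bounded by $\sup|p|$ over that cylinder (or to bootstrap through Caccioppoli-type $L^2$ estimates); either way the Gaussian factor survives with a slightly smaller but fixed constant, uniformly in $k$ and $I$. Finally, note the normalization: the paper's kernel is the fundamental solution of $\partial_t-L$, not $\partial_t-\tfrac12 L$; this only rescales time and is harmless.
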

Let us now define Sobolev spaces $S^{k,p}(\mathbb{G})$ associated with vector fields $\{Z_i| 1\leq i\leq m\}$:
\begin{align*}
S^{k,p}(\mathbb{G}):=\{f|Z_If \in L^p(\mathbb{G}), |I|\leq k\},
\end{align*}  
and the associated norm $\norm{\cdot}_{S^{k,p}}$ by
\begin{align*}
\norm{f}_{S^{k,p}}=\sum_{|I|\leq k} \norm{Z_If}_{L^p}.
\end{align*}
Note that $Z_If$ is understood as a distributional sense. Like the standard Sobolev embedding theorems in the Euclidean spaces,  $S^{k,p}(\mathbb{G})$ enjoy the embedding theorems as well. Let us define Lipschitz spaces $\Gamma^\alpha(\mathbb{G})$ as follows: for $0<\alpha<1$, 
\begin{gather*}
\Gamma^\alpha(\mathbb{G}) := \Big\{f\in C_b(\mathbb{G})  \Big \vert\sup_{x,y\in \mathbb{G}}\frac{|f(x\circ y)-f(x)|}{\norm{y}^\alpha}<\infty\Big\}, \\
\Gamma^1(\mathbb{G}) := \Big\{f\in C_b(\mathbb{G})  \Big \vert \sup_{x,y\in \mathbb{G}}\frac{|f(x\circ y)+f(x\circ y^{-1})-2f(x)|}{\norm{y}}<\infty \Big\}.
\end{gather*}
For $\alpha = n+\alpha'$ with a nonnegative integer $n$ and $0<\alpha'\leq 1$, define $\Gamma^\alpha(\mathbb{G})$ by
\begin{align*}
\Gamma^\alpha(\mathbb{G}) := \{ f\in \Gamma^{\alpha'}(\mathbb{G}) \ | \  X_If\in \Gamma^{\alpha'}(\mathbb{G})  \ \text{for} \  |I|\leq n\}.
\end{align*}
We refer to \cite[Section 5]{f2} for more details about the Lipschitz spaces.

We state the Sobolev embedding theorem for the spaces $S^{k,p}(\mathbb{G})$:

\begin{theorem}\cite[Theorem 9]{f3} \label{2.8}
Suppose that $l\leq k$. Then, the space $S^{k,p}(\mathbb{G})$ is continuously embedded into $S^{l,q}(\mathbb{G})$ for $1<p<q<\infty$ satisfying $k-l=Q(\frac{1}{p}-\frac{1}{q})$. Also, the space $S^{k,p}(\mathbb{G})$ is continuously embedded into $\Gamma^\alpha(\mathbb{G})$ for $\alpha=k-\frac{Q}{p}>0$. 
\end{theorem}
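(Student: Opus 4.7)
The plan is to reduce the Sobolev embedding to a Hardy--Littlewood--Sobolev (HLS) type inequality on $\mathbb{G}$, using the fundamental solution of the sub-Laplacian $L$ to invert horizontal differentiations. First I would establish the following HLS inequality: if $K$ is a kernel of type $\alpha$ with $0<\alpha<Q$, then $f\mapsto f*K$ extends to a bounded map $L^p(\mathbb{G})\to L^q(\mathbb{G})$ whenever $1<p<q<\infty$ and $\tfrac{1}{p}-\tfrac{1}{q}=\tfrac{\alpha}{Q}$. The proof uses the pointwise bound $|K(x)|\lesssim \|x\|^{\alpha-Q}$, splits $K$ at $\{\|x\|=R\}$, and combines a dyadic decomposition of the inner part with H\"older's inequality on the outer part to obtain
\[
|K*f(x)|\lesssim R^\alpha Mf(x)+R^{\alpha-Q/p}\|f\|_{L^p};
\]
pointwise optimization of $R$ then gives $|K*f(x)|\lesssim Mf(x)^{p/q}\|f\|_{L^p}^{1-p/q}$, and the strong-type bound follows from the $L^p$-boundedness of the Hardy--Littlewood maximal function $M$ on spaces of homogeneous type in the sense of Coifman--Weiss.

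Next I would derive a representation formula that inverts differentiations. By Folland's theorem, there is a fundamental solution $\Gamma$ of $L$ which is a kernel of type $2$. For $f\in C_c^\infty(\mathbb{G})$,
\[
f=-L\Gamma*f=-\sum_{i=1}^m Z_i^2\Gamma*f,
\]
and integration by parts in the convolution (using the left-invariance of $Z_i$ to shift one derivative onto $f$, at the price of turning the other left-invariant derivative on $\Gamma$ into a right-invariant one) rewrites this as $f=\sum_{i=1}^m \widetilde K_i*(Z_if)$, where each $\widetilde K_i$ is a kernel of type $1$. Iterating this identity $k-l$ times, applied to $Z_Jf$ in place of $f$, expresses $Z_Jf$ with $|J|\leq l$ as a finite linear combination of convolutions of kernels of type $k-l$ with $Z_If$ for $|I|\leq k$. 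Since $1\leq k-l<Q$, the HLS inequality of the first step applied with $\alpha=k-l$ yields the desired embedding $S^{k,p}(\mathbb{G})\hookrightarrow S^{l,q}(\mathbb{G})$.

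For the Lipschitz embedding $S^{k,p}\hookrightarrow\Gamma^\alpha$ with $\alpha=k-Q/p>0$, the same representation reduces matters to a Morrey-type estimate: if $K$ is a kernel of type $\beta$ with $\beta-Q/p>0$, then convolution with $K$ sends $L^p(\mathbb{G})$ into $\Gamma^{\beta-Q/p}(\mathbb{G})$. To prove this I would estimate
\[
(K*f)(x)-(K*f)(y)=\int\bigl[K(z^{-1}x)-K(z^{-1}y)\bigr]f(z)\,dz
\]
by splitting the integration domain according to whether $\|z^{-1}x\|$ is much larger than, or comparable to, $d(x,y):=\|x^{-1}y\|$, and combining H\"older's inequality with the off-origin size estimate $|K(u)-K(v)|\lesssim d(u,v)\cdot\min(\|u\|,\|v\|)^{\beta-Q-1}$ that comes from the smoothness and homogeneity of $K$.

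The main obstacle I anticipate lies in the careful handling of the representation formula in the noncommutative setting. Integrating by parts a left-invariant $Z_i$ inside a convolution does not land back on a left-invariant derivative of $\Gamma$, but produces a right-invariant derivative instead; one must verify that these right-invariant derivatives of $\Gamma$ are still kernels of type $1$, and that iterated convolutions of such kernels produce kernels of type $k-l$ with the correct homogeneity and no spurious singularities. This is a delicate bookkeeping step resting on Folland's structural description of $\Gamma$ and on the general fact that the convolution of kernels of types $\alpha_1$ and $\alpha_2$ with $\alpha_1+\alpha_2<Q$ is itself a kernel of type $\alpha_1+\alpha_2$.
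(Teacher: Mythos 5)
The paper does not prove this statement at all---it is quoted directly from Folland \cite[Theorem 9]{f3}---and your outline is essentially a reconstruction of Folland's original argument (fractional-integration bound for kernels of type $\alpha$ via a Hedberg-type splitting, inversion of horizontal derivatives through the fundamental solution of $L$, composition of kernels of types $\alpha_1,\alpha_2$ with $\alpha_1+\alpha_2<Q$, and a Morrey-type estimate for the Lipschitz part), so in the relevant sense it is the same approach as the source the paper relies on. Two small repairs: the transfer identity $(Z_i g)*h=g*(Z_i^R h)$ only moves a left-invariant derivative off the \emph{left} factor of a convolution, so the representation formula must be written with the kernel on the right, $f=\sum_{i=1}^m (Z_i f)*(Z_i^R\Gamma)$ with $Z_i^R\Gamma$ a kernel of type $1$ (which then matches your HLS statement for $f\mapsto f*K$); as literally written, $\widetilde K_i*(Z_i f)$ would instead produce right-invariant derivatives of $f$. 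Also, your Morrey step only treats $0<\beta-Q/p<1$: when $\alpha=k-Q/p\geq 1$ one must reduce to the fractional case through the inductive definition of $\Gamma^\alpha$ (applying the estimate to $Z_I f$, $|I|\leq n$), and the borderline exponent $1$ requires the second-difference (Zygmund) estimate, as in Folland's proof.
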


\begin{remark} \label{diff}
Let us define a different version of Lipschitz spaces $\tilde{\Gamma}^\alpha(\mathbb{G})$ ($0<\alpha<1$):
\begin{gather*}
\tilde{\Gamma}^\alpha(\mathbb{G}) := \Big\{f\in C_b(\mathbb{G})  \big \vert \sup_{x,y\in \mathbb{G}}\frac{|f(y\circ x)-f(x)|}{\norm{y}^\alpha}<\infty\Big\}.
\end{gather*}
Then, it is not hard to  check that $\Gamma^\alpha_{\text{loc}}(\mathbb{G}) \subset \tilde{\Gamma}^{\alpha/r}_{\text{loc}}(\mathbb{G}) $. In fact, if we define the Euclidean  Lipschitz spaces $\Lambda^\alpha(\mathbb{G})$ ($0<\alpha<1$):
\begin{align*}
\Lambda^\alpha(\mathbb{G}) := \Big\{f\in C_b(\mathbb{G})  \Big \vert \sup_{x,y\in \mathbb{G}}\frac{|f(x+ y)-f(x)|}{|y|^\alpha}<\infty\Big\},
\end{align*}
then according to \cite[Theorem 6]{f3},
$\Gamma^\alpha_{\text{loc}}(\mathbb{G}) \subset \Lambda^{\alpha/r}_{\text{loc}}(\mathbb{G})$.
Also, from the fact $|x|=\mathcal{O}(\norm{x})$, we can deduce that $\Lambda^{\alpha/r}_{\text{loc}}(\mathbb{G})\subset \tilde{\Gamma}^{\alpha/r}_{\text{loc}}(\mathbb{G})$. Thus, we obtain $\Gamma^\alpha_{\text{loc}}(\mathbb{G}) \subset \tilde{\Gamma}^{\alpha/r}_{\text{loc}}(\mathbb{G}) $.
\end{remark}
We refer to \cite{f1,f2,fs,group} for more details of the theory of analysis on the Lie groups. 

\subsection{Mixed-norm parabolic Sobolev spaces} \label{section 3.2}
In this section, we define the mixed-norm parabolic Sobolev spaces with respect to vector fields $\{Z_i| 1\leq i\leq m\}$ and study their properties. Note that if ($\mathbb{G},\circ,D(\lambda)$) is a homogeneous group with a homogeneous dimension $Q$ and a homogeneous norm $\norm{\cdot}$, then the homogeneous group structure on $\R\times \mathbb{G}$ can be endowed as follows:  $(s,x)\tilde{\circ} (t,y)=(s+t,x\circ y)$,  $\tilde{D}(\lambda)(t,x)=(\lambda^2t,D(\lambda)x)$. Also, the homogeneous dimension of ($\R \times \mathbb{G},\tilde{\circ},\tilde{D}(\lambda)$) is equal to $Q+2$ and $\norm{(t,x)}:=\sqrt{|t|+\norm{x}^2}$ defines a  homogeneous norm on $\R\times \mathbb{G}$.

\begin{definition}
For $1\leq p,q\leq \infty$ and the integer $k\geq 0$, let us define (inhomogeneous) mixed-norm Sobolev spaces $S^{k,(q,p)}([0,T]\times \mathbb{G})$ with respect to vector fields $\{Z_i| 1\leq i\leq m\}$:
\begin{align*}
S^{k,(q,p)}([0,T]\times \mathbb{G}):=\{f\in L^q([0,T],L^p(\mathbb{G})) \ |\ Z_If \in L^q([0,T],L^p(\mathbb{G})) , |I|\leq k \}.
\end{align*}
The corresponding norm is defined by 
\begin{align*}
\norm{f}_{S^{k,(q,p)}([0,T]\times \mathbb{G})}:=\sum_{|I|\leq k} \norm{Z_If}_{L^q([0,T],L^p(\mathbb{G}))}.
\end{align*}
One can also define the homogeneous mixed-norm Sobolev spaces $\dot{S}^{k,(q,p)}([0,T]\times \mathbb{G})$  and the corresponding norm  $\norm{f}_{\dot{S}^{k,(q,p)}([0,T]\times \mathbb{G})}:= \sum_{|I|= k}  \norm{Z_If}_{L^q([0,T],L^p(\mathbb{G}))}$ similarly.
\end{definition}

From now on, we assume that $\mathbb{G}=(\R^N,\circ,D(\lambda))$ is a homogeneous Carnot group with a homogeneous dimension $Q$ and nilpotency $r$.
We first study the boundedness properties of the operators of type 0 in the mixed-norm spaces. It is the classical theory that the  operators of type 0 are bounded on $L^p(\R\times \mathbb{G})$ for $1<p<\infty$ (see \cite[p. 917]{f3}). One can generalize this result to the mixed-norm spaces $L^q(\R,L^p( \mathbb{G}))$ as an  application of the vector-valued Calder\'on-Zygmund theory.
 
\begin{theorem} \label{2.11}
Operators $T$ of type 0 are bounded in $L^q(\R,L^p( \mathbb{G}))$ for any $1<p,q<\infty$.
\end{theorem}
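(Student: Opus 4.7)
The plan is to apply the vector-valued Calder\'on--Zygmund theorem of Benedek--Calder\'on--Panzone, viewing $T$ as a convolution operator in the temporal variable with an operator-valued kernel on $L^p(\mathbb{G})$. First I would unpack the convolution on the product group $(\R\times\mathbb{G},\tilde\circ)$: since $(t,x)\tilde\circ(-s,y^{-1})=(t-s, x\circ y^{-1})$, one has
\begin{equation*}
(Tf)(t,x)=\int_{\R}\bigl[f(t-s,\cdot)*_{\mathbb{G}}K(s,\cdot)\bigr](x)\,ds,
\end{equation*}
so that if one defines $\mathcal{K}(s)\in B(L^p(\mathbb{G}))$ by $\mathcal{K}(s)g:=g*_{\mathbb{G}}K(s,\cdot)$ for $s\neq 0$, then $T$ is temporal convolution with $\mathcal{K}(\cdot)$. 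To see that $\mathcal{K}(s)$ is well defined, one uses the bound $|K(s,x)|\lesssim \|(s,x)\|^{-(Q+2)}$ with $\|(s,x)\|=\sqrt{|s|+\norm{x}^2}$ and splits $\mathbb{G}$ at $\norm{x}=\sqrt{|s|}$ to deduce $\|K(s,\cdot)\|_{L^1(\mathbb{G})}\lesssim |s|^{-1}$; Young's inequality then gives $\|\mathcal{K}(s)\|_{B(L^p(\mathbb{G}))}\lesssim |s|^{-1}$.

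Next I would invoke the scalar result cited in the excerpt, which gives boundedness of $T$ on $L^p(\R\times\mathbb{G})=L^p(\R,L^p(\mathbb{G}))$ for every $1<p<\infty$. Fixing $p$, this furnishes the single-exponent hypothesis needed to run BCP with the Banach space $X=L^p(\mathbb{G})$, and the task then reduces to verifying the H\"ormander condition for the $B(L^p(\mathbb{G}))$-valued kernel $\mathcal{K}(\cdot)$, namely
\begin{equation*}
\sup_{s_0\neq 0}\int_{|s|>2|s_0|}\bigl\|\mathcal{K}(s-s_0)-\mathcal{K}(s)\bigr\|_{B(L^p(\mathbb{G}))}\,ds<\infty.
\end{equation*}
Since the operator norm is dominated by $\|K(s-s_0,\cdot)-K(s,\cdot)\|_{L^1(\mathbb{G})}$, the mean value theorem in $s$ together with the homogeneity identity $\partial_t K(\lambda^2 t, D(\lambda)x)=\lambda^{-(Q+4)}\partial_t K(t,x)$ and smoothness of $\partial_t K$ off the origin yields $|\partial_t K(s',x)|\lesssim \|(s',x)\|^{-(Q+4)}$. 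The same splitting at $\norm{x}=\sqrt{|s'|}$ gives $\|\partial_t K(s',\cdot)\|_{L^1(\mathbb{G})}\lesssim |s'|^{-2}$, and for $|s|>2|s_0|$ we obtain $\|K(s-s_0,\cdot)-K(s,\cdot)\|_{L^1(\mathbb{G})}\lesssim |s_0||s|^{-2}$. Integrating in $s$ over $\{|s|>2|s_0|\}$ produces a bound of order $1$, uniform in $s_0$, establishing the H\"ormander condition.

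The Benedek--Calder\'on--Panzone theorem then upgrades the diagonal boundedness $T:L^p(\R,L^p(\mathbb{G}))\to L^p(\R,L^p(\mathbb{G}))$ to $T:L^q(\R,L^p(\mathbb{G}))\to L^q(\R,L^p(\mathbb{G}))$ for every $1<q<\infty$, with $p$ kept fixed; since $p$ was arbitrary, this gives the full statement. I expect the main obstacle to be the H\"ormander verification, because it demands careful bookkeeping of the homogeneity degree of $\partial_t K$ with respect to the parabolic dilations $\tilde D(\lambda)$ and a two-regime decomposition of $\mathbb{G}$ according to whether $\norm{x}\lessgtr \sqrt{|s|}$; once this operator-valued estimate is in hand, the rest of the argument is a direct appeal to the vector-valued machinery.
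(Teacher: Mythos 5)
Your proposal is correct and follows essentially the same route as the paper: both reduce the mixed-norm bound to the vector-valued Calder\'on--Zygmund (Benedek--Calder\'on--Panzone) theorem, using the scalar $L^p(\R\times\mathbb{G})$ boundedness of operators of type $0$ for the diagonal case and verifying the H\"ormander condition for the $B(L^p(\mathbb{G}))$-valued kernel by dominating the operator norm with $\|K(s-s_0,\cdot)-K(s,\cdot)\|_{L^1(\mathbb{G})}$. The only (harmless) difference is in how the kernel-difference decay is obtained: you use the mean value theorem in $t$ together with the homogeneity of $\partial_t K$ of degree $-(Q+4)$, giving $|s_0|\,|s|^{-2}$, whereas the paper invokes the Kor\'anyi--V\'agi H\"older estimate for kernels homogeneous of degree $-(Q+2)$, giving $|s|^{\delta/2}|t|^{-1-\delta/2}$; both bounds integrate to a constant uniform in the shift, so the arguments are interchangeable.
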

\begin{proof}
Let us denote $K(t,x)$ by a singular integral kernel of $T$. For $t\in \R$, let us define the operator $P_t:L^p(\mathbb{G})\rightarrow L^p(\mathbb{G})$ by
\begin{align*}
P_tf(x) = \int_{\mathbb{G}} K(t,x\circ y^{-1})f(y) dy.
\end{align*}
Then, $Tf$ can be written as
\begin{align*}
Tf(t,x)=\int_{\R} P_sf(t-s,\cdot)(x) ds.
\end{align*} 
Since the operators of type 0  are bounded in $L^p$ for $1<p<\infty$ according to \cite[p. 917]{f3}, $T$ is bounded in $L^p(\R,L^p(\mathbb{G}))$. In order to extend this to the general cases $q\neq p$, it suffices to prove the following inequality: for some constant $c>0$ independent of $s$,
\be \label{calderon}
\int_{|t|\geq c|s|} \norm{P_t-P_{t-s}}_{L^p\rightarrow L^p}dt \leq C < \infty,
\ee
according to the vector-valued Calder\'on-Zygmund theory. One can represent $P_t-P_{t-s}$  in terms of the singular integral kernel $K$:
\begin{align} \label{2110}
(P_t-P_{t-s})f(x)=\int_{\mathbb{G}} [K(t,x\circ y^{-1})-K(t-s,x\circ y^{-1})]f(y)dy.
\end{align}
Let us define a homogeneous norm $\norm{\cdot}$ on $\R\times \mathbb{G}$  by $\norm{(t,x)}:=\sqrt{|t|+\norm{x}^2}$.  Since the singular integral kernel $K(t,x)$ is homogeneous of degree $-(Q+2)$, using \cite[Lemma 5.2]{kv}, there exist $C,\delta>0$ such that whenever $C\norm{(s,y)} \leq \norm{(t,x)}$,
\begin{align*}
|K((t,x)\circ (s,y))-K(t,x)| +|K((s,y)\circ (t,x))-K(t,x)| \leq C\frac{\norm{(s,y)}^\delta}{\norm{(t,x)}^{Q+2+\delta}}.
\end{align*}
 Therefore, for some constant $C_1$ depending  on $\delta$, whenever $|t|>C^2|s|$, 
\begin{align*}
\norm{K(t,x)-K(t-s,x)}_{L^1(\mathbb{G})}&\leq \norm{\frac{|s|^{\delta/2}}{(|t|+\norm{x}^2)^{\delta/2+(Q+2)/2}} }_{L^1(\mathbb{G})} \\
&=|s|^{\delta/2}\int_{\mathbb{G}} \frac{|t|^{Q/2}}{|t|^{\delta/2+(Q+2)/2}(1+\norm{z}^2)^{\delta/2+(Q+2)/2}}dz  = C_1\frac{|s|^{\delta/2}}{|t|^{\delta/2+1}}.
\end{align*}
Thus, applying the Young's convolution inequality to \eqref{2110},
\begin{align*}
\norm{P_t-P_{t-s}}_{L^p\rightarrow L^p} \leq  C_1\frac{|s|^{\delta/2}}{|t|^{\delta/2+1}}.
\end{align*} 
Integrating this in $t$, we obtain
\begin{align*}
\int_{|t|\geq C^2|s|} \norm{P_t-P_{t-s}}_{L^p\rightarrow L^p}dt \leq  C_1\int_{|t|\geq C^2|s|}\frac{|s|^{\delta/2}}{|t|^{\delta/2+1}} dt \leq \frac{4C_1}{\delta C^\delta },
\end{align*} 
which immediately implies \eqref{calderon}.
\end{proof}

We now focus on the following parabolic equation, involving the sub-Laplacian $L$:
\be \label{hypo heat}
\begin{cases}
u_t-L u=f, \quad 0\leq t\leq T, \\
u(0,x)=0.
\end{cases}
\ee 
 Since \eqref{hypo heat} serves as a toy equation to study the Kolmogorov PDE  \eqref{kol}, it is crucial to establish the well-posedness result of the equation \eqref{hypo heat} in the mixed-norm  parabolic Sobolev spaces. We introduce the auxiliary function spaces:  for $1\leq p,q\leq \infty$ and $k\geq 2$, $u$ belongs to the function space $\tilde{S}^{k,(q,p)}([0,T]\times \mathbb{G})$ when
\begin{align} \label{til}
u\in S^{k,(q,p)}([0,T]\times \mathbb{G}), \quad
u_t\in S^{k-2,(q,p)}([0,T]\times \mathbb{G}).
\end{align}
The corresponding norm $\norm{\cdot}_{\tilde{S}^{k,(q,p)}}$ is defined by
\begin{align*}
\norm{u}_{\tilde{S}^{k,(q,p)}([0,T]\times \mathbb{G})}:=\norm{u}_{S^{k,(q,p)}([0,T]\times \mathbb{G})}+\norm{u_t}_{S^{k-2,(q,p)}([0,T]\times \mathbb{G})}.
\end{align*}
One can prove the well-posedness result of the PDE \eqref{hypo heat} in the class $\tilde{S}^{k,(q,p)}([0,T]\times \mathbb{G})$:

\begin{theorem} \label{3.1}
Suppose that $1<p,q<\infty$. Then,  for any $f\in S^{k,(q,p)}([0,T]\times \mathbb{G})$, there exist a unique solution $u\in \tilde{S}^{k+2,(q,p)}([0,T]\times \mathbb{G})$ to PDE  \eqref{hypo heat}. Also, there exists some constant $C$ independent of $T$ such that for any $f\in S^{k,(q,p)}([0,T]\times \mathbb{G})$,
\be \label{a priori 0}
\norm{u}_{\dot{S}^{k+2,(q,p)}([0,T]\times \mathbb{G})}\leq C\norm{f}_{\dot{S}^{k,(q,p)}([0,T]\times \mathbb{G})},
\ee
\be \label{a priori 0'}
\norm{u}_{\tilde{S}^{k+2,(q,p)}([0,T]\times \mathbb{G})}\leq C\max \{T,1\}\norm{f}_{S^{k,(q,p)}([0,T]\times \mathbb{G})}.
\ee
\end{theorem}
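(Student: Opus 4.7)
The plan is to construct the solution of \eqref{hypo heat} by Duhamel's formula,
$$u(t,x)=\int_0^t \bigl(p(t-s,\cdot)*f(s,\cdot)\bigr)(x)\,ds,$$
where $p$ is the heat kernel provided by Theorem \ref{2.4} and $*$ denotes convolution on $\mathbb{G}$. The crucial observation is that for any multi-index $J$ with $|J|=k+2$, the map $f\mapsto Z_J u$ can be realized as convolution on the parabolic homogeneous group $(\R\times\mathbb{G},\tilde\circ,\tilde{D}(\lambda))$, whose homogeneous dimension equals $Q+2$, against a kernel of type $0$ in the sense of Definition \ref{2.2}. Granting this, Theorem \ref{2.11} applied in the parabolic setting delivers \eqref{a priori 0} at once, since convolution with a type-$0$ kernel is bounded on $L^q(\R,L^p(\mathbb{G}))$.

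The key reduction I would carry out is the following. By left-invariance of the $Z_i$'s and the identity $Z_J(p(t,\cdot)*g)=p(t,\cdot)*Z_J g$ for left-invariant derivatives acting on the right factor, one redistributes the $k+2$ derivatives by putting two onto the heat kernel and the remaining $k$ onto $f$. Thus $Z_J u = K*_{\R\times\mathbb{G}}(Z_{J''}f)$ with $|J''|=k$ and $K(t,x):=\1_{\{t>0\}}Z_{J'}p(t,x)$ where $|J'|=2$. I would then verify that $K$ satisfies Definition \ref{2.2}: smoothness and the size estimate $|K(t,x)|\leq C\norm{(t,x)}^{-(Q+2)}$ together with its $\tilde Z$-derivatives follow from the Gaussian bound \eqref{heat} combined with a direct scaling; parabolic homogeneity of degree $-(Q+2)$ is read off from $p(t,x)=t^{-Q/2}p(1,D(1/\sqrt t)x)$ and $|J'|=2$; the principal-value cancellation $\int_{a<\norm{(t,x)}<b}K\,dtdx=0$ is the delicate point and is extracted from the fact that for each $t>0$, $\int_{\mathbb{G}}Z_{J'}p(t,x)\,dx=0$ (as $Z_{J'}p(t,\cdot)$ is an exact derivative of an integrable function), coupled with the scaling invariance of the parabolic shell integral under $\tilde D(\lambda)$. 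Applying Theorem \ref{2.11} then yields \eqref{a priori 0}.

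The bound on $u_t$ is obtained directly from the equation, writing $u_t=Lu+f$ and invoking the already-established estimate $\|Lu\|_{S^{k,(q,p)}}\leq C\|f\|_{S^{k,(q,p)}}$. To upgrade \eqref{a priori 0} to the inhomogeneous \eqref{a priori 0'}, I would add control of $\|u\|_{L^q([0,T],L^p(\mathbb{G}))}$ and $\|Z_I u\|$ for $|I|<k+2$: using $u(0,\cdot)=0$ and the fundamental theorem of calculus in $t$, combined with Minkowski's inequality in $t$, produces a linear-in-$T$ factor on the lower-order terms, yielding the $\max\{T,1\}$ prefactor. Existence would be established by first approximating $f\in S^{k,(q,p)}$ by smooth compactly supported data, for which Duhamel's formula gives a classical solution, and then passing to the limit via the a priori estimate \eqref{a priori 0'}; uniqueness within $\tilde S^{k+2,(q,p)}$ follows by mollifying a hypothetical null solution in both time and space via $\tilde D(\lambda)$-compatible approximate identities and reducing to the classical uniqueness for the heat semigroup generated by $L$.

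The main obstacle is rigorously identifying $\1_{\{t>0\}}Z_{J'}p$ as a singular integral kernel on the parabolic group in the sense of Definition \ref{2.2}, specifically the principal-value cancellation on parabolic shells. Unlike the elliptic situation, the support constraint $t>0$ breaks the natural symmetry and the cancellation must be obtained by combining the $\mathbb{G}$-cancellation $\int Z_{J'}p(t,\cdot)\,dx=0$ (via left-invariance and integration by parts) with the $\tilde D(\lambda)$-invariance of the parabolic shell integrals, plus a careful estimate of the contribution near $t=0$. A secondary, more technical difficulty is justifying the commutation identity $Z_J(p*f)=p*Z_J f$ at the level of $S^{k,(q,p)}$ distributions and iterating the argument in the mixed-norm scale, which I would handle by density of smooth compactly supported functions combined with the a priori bound.
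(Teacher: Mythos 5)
Your overall scheme (Duhamel representation, reduction to a parabolic type-$0$ kernel via Theorem \ref{2.11}, recovering $u_t$ from the equation, a $T$-factor for the lower-order terms, approximation for existence and uniqueness) is the paper's scheme, and for $k=0$ your reduction agrees with Step 1 of the paper's proof. However, the central step for $k\geq 1$ contains a genuine gap: on a noncommutative homogeneous Carnot group you cannot "redistribute" the $k+2$ left-invariant derivatives by putting two on the heat kernel and $k$ on $f$. Left-invariant vector fields pass through a group convolution only onto the \emph{right-hand} factor, i.e. $Z(g*h)=g*(Zh)$; since the solution is $u=f*P$ (convolution on $\R\times\mathbb{G}$, kernel on the right, because the semigroup of the left-invariant operator $L$ is a right convolution), all $k+2$ derivatives initially land on $P$, and transferring any of them back onto $f$ converts them into \emph{right-invariant} derivatives via $(Z_if)*g=f*(Z_i^Rg)$ — they are not the same operators, except in the abelian case $r=1$. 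Consequently your identity $Z_Ju=\bigl(\1_{\{t>0\}}Z_{J'}p\bigr)*\bigl(Z_{J''}f\bigr)$ with $|J'|=2$, $|J''|=k$ is false in general, and the a priori estimate \eqref{a priori 0} for $k\geq 1$ (which is exactly the range needed later, since Theorem \ref{3.3} uses $k=r-1\geq 1$) is not established. The paper's proof fixes this by the subelliptic-estimates device: write $Z_iv=\sum_j Z_j^R(\beta_{ji}v)$ with $\beta_{ji}$ homogeneous of degree $\alpha_j-1$, decompose $Z_j^R$ into a generator $Z_{jl}^R$ composed with lower-order right-invariant derivatives $Z_{jI}^R$, and only then pull the generator derivative onto $f$; the resulting kernels are of the form $Z_{i_1}Z^R_{jI}(\beta_{ji_2}Z_{i_3}P)$ — not $Z_{J'}p$ — and one checks by homogeneity counting that these are singular integral kernels, after which Theorem \ref{2.11} applies. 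Your more careful discussion of the shell cancellation for $\1_{\{t>0\}}Z_{J'}p$ is a reasonable elaboration of what the paper asserts without proof, but it addresses the wrong kernel for $k\geq1$.

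A secondary, smaller issue: your claim that the fundamental theorem of calculus in $t$ plus Minkowski controls all lower-order terms does not cover the first-order norm $\norm{u}_{\dot{S}^{1,(q,p)}}$ when $k=0$, since that route would need $u_t\in S^{1,(q,p)}$, which is not available; the paper instead invokes an interpolation inequality for the Folland--Stein Sobolev scale to handle this intermediate term. With the right-invariant-field machinery inserted in the main step and the interpolation inequality for the first-order term, the rest of your argument (existence by density using \eqref{a priori 0'}, uniqueness by applying the estimate to approximations of a null solution) matches the paper's Steps 2--4.
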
 
\begin{proof}
Throughout the proof, we use a notation $P$  for the heat kernel in order to avoid a confusion with the exponent $p$. Also, $P_t$ denotes a semigroup generated by the sub-Laplacian $L$. For test functions $f\in C^\infty_c(\R \times \mathbb{G})$, let us define
\begin{align}\label{q}
Qf(t,x):=\int_{-\infty}^t P_{t-s}f(s)(x)ds.
\end{align}
Note that  $u:=Qf$ is a classical solution to the equation $u_t-Lu=f$.

Step 1. A priori estimate on $\norm{Qf}_{\dot{S}^{k+2,(q,p)}([0,T]\times \mathbb{G})}$: let us first prove that
\begin{align} \label{higher}
\norm{Qf}_{\dot{S}^{k+2,(q,p)}(\R \times \mathbb{G})} \leq C\norm{f}_{\dot{S}^{k,(q,p)}(\R \times \mathbb{G})}
\end{align}
for any test functions $f$. In the case  of  $k=0$, Theorem \ref{2.11} immediately implies \eqref{higher}. In fact, for any $1\leq i_1,i_2\leq m$, we have a representation formula:
\begin{align*}
Z_{i_1}Z_{i_2}Qf = f* Z_{i_1}Z_{i_2} P
\end{align*}
(convolution acts on $\R \times \mathbb{G}$), and note that $Z_{i_1}Z_{i_2} P$ is a singular kernel.

Now, let us prove \eqref{higher} when $k=1$. We make use of the  arguments in the theory of  subelliptic estimates (see for instance \cite[Section 3.3.5]{b}).
If we denote $Z^R_i$ ($1\leq i\leq N$) by a right-invariant vector field which coincides with $\frac{\partial}{\partial x_i}$ at the origin, then we have
\begin{align*}
(Z_i f)*g=f*(Z^R_ig)
\end{align*}
(convolution acts on $\mathbb{G}$). Also, for each $1\leq i\leq m$, there exist homogeneous functions $\beta_{ji}$ of degree $\alpha_j-1$ ($1\leq j\leq N$) such that
\begin{align} \label{b.5}
Z_iu=\sum_{j=1}^N Z^R_{j} (\beta_{ji}u)
\end{align}
holds for all test functions $u$  on $\mathbb{G}$ (see \cite[p.64]{b}). Note that
since each $Z_j$ ($1\leq j\leq N$) can be written as a commutator of $Z_i$'s ($1\leq i\leq m$) with order $\alpha_j$ and the following identity 
\begin{align*}
[Z_i^R,Z_j^R]=-[Z_i,Z_j]^R
\end{align*}
holds for any indices $i,j$, we can write $Z_j^R$ as
\begin{align*}
Z_j^R = \sum_{l,I} Z_{jl}^R  Z^R_{jI}.
\end{align*}
Here, each $Z_{jl}$ is one of $Z_i$'s ($1\leq i\leq m$), and each $Z_{jI}^R$ is of the form $Z_{s_1}^R\cdots Z_{s_{\alpha_j-1}}^R$ for $1\leq s_1,\cdots,s_{\alpha_j-1} \leq m$. 
Therefore, applying this to \eqref{b.5}, for any indices $1\leq i_2,i_3\leq m$,
\begin{align*}
f * Z_{i_2}Z_{i_3}P &= f *(\sum_{j=1}^N Z^R_{j} (\beta_{ji_2} Z_{i_3}P)) =\sum_{j=1}^N \sum_{l,I} Z_{jl}f * (Z^R_{jI} (\beta_{ji_2} Z_{i_3}P))
\end{align*}
(convolution acts on $\R \times \mathbb{G}$). Differentiating this in $Z_{i_1}$ ($1\leq i_1\leq m$) direction,
\begin{align*}
Z_{i_1}Z_{i_2}Z_{i_3}u = \sum_{j=1}^N \sum_{l,I} Z_{jl}f * (Z_{i_1}Z^R_{jI} (\beta_{ji_2} Z_{i_3}P)).
\end{align*}
Recall that $P$ is a kernel of type 2, $\beta_{ji_2}$ is homogeneous of degree $\alpha_j-1$,  $Z^R_{jI}$ is a differential operator of order $\alpha_j-1$, and $Z_{i_2}$, $Z_{i_3}$ are differential operators of order 1. From this, it follows that
$Z_{i_1}Z^R_{jI} (\beta_{ji_2} Z_{i_3}P)$
is a singular integral kernel. Since  the  operators of type 0 are bounded in $L^q(\R, L^p(\mathbb{G}))$ according to Theorem \ref{2.11},  we have
\begin{align*}
\norm{Z_{i_1}Z_{i_2}Z_{i_3}u}_{L^q(\R, L^p(\mathbb{G}))} \leq C \sum_{1\leq i\leq m} \norm{Z_if}_{L^q(\R, L^p(\mathbb{G}))}.
\end{align*} This concludes the proof when $k=1$. Similar arguments work for general $k$ as well.

Step 2. A priori estimate on  $\norm{Qf}_{\tilde{S}^{k+2,(q,p)}([0,T]\times \mathbb{G})}$: we prove that
\begin{align} \label{3131}
\norm{Qf}_{\tilde{S}^{k+2,(q,p)}([0,T]\times \mathbb{G})}\leq C\max \{T,1\}\norm{f}_{S^{k,(q,p)}([0,T]\times \mathbb{G})}
\end{align} 
for any test functions $f$. Since $u(t):=Qf(t)$ with $0\leq t\leq T$ depends only on  $f(s)$ with $s\leq t$, according to the estimate \eqref{higher}, for any $0\leq l\leq k$,
\begin{align*}
\norm{u}_{\dot{S}^{l+2,(q,p)}([0,T]\times \mathbb{G})}\leq C\norm{f}_{\dot{S}^{l,(q,p)}([0,T]\times \mathbb{G})}.
\end{align*}
Note that the constant $C$ can be chosen independently of $T$ due to the existence of scaling  $u(t,x) \mapsto u(\lambda^2t,D(\lambda) x)$
for  $\lambda>0$. Summing these inequalities over $0\leq l\leq k$,
\begin{align}\label{3132}
\sum_{l=0}^k \norm{u}_{\dot{S}^{l+2,(q,p)}([0,T]\times \mathbb{G})}\leq C\norm{f}_{S^{k,(q,p)}([0,T]\times \mathbb{G})}.
\end{align}
From the equation $u_t-Lu=f$ and the estimate \eqref{3132}, we have
\begin{align} \label{3135}
\norm{u_t}_{S^{k,(q,p)}([0,T]\times \mathbb{G})} \leq C \norm{f}_{S^{k,(q,p)}([0,T]\times \mathbb{G})}.
\end{align}
Applying this to  the trivial inequality $u(t,x)\leq \int_0^T |u_t(s,x)|ds$, we obtain
\begin{align} \label{3133}
\norm{u}_{L^q([0,T],L^p(\mathbb{G}))} \leq T\norm{u_t}_{L^q([0,T],L^p(\mathbb{G}))}\leq CT \norm{f}_{S^{k,(q,p)}([0,T]\times \mathbb{G})}.
\end{align} 
 Also, the interpolation type inequality  \cite[Theorem 3.3]{interpolation}
allows us to obtain 
\begin{align} \label{3134}
\norm{u}_{\dot{S}^{1,(q,p)}([0,T]\times \mathbb{G})}\leq C\max\{T,1\}\norm{f}_{S^{k,(q,p)}([0,T]\times \mathbb{G})}.
\end{align}  Thus, using \eqref{3132}, \eqref{3135}, \eqref{3133}, and \eqref{3134}, we obtain \eqref{3131}. 

Step 3. Existence of a solution:  it can be proved by a standard approximation argument thanks to the estimate \eqref{3131}. Also, \eqref{a priori 0} and \eqref{a priori 0'} hold for any $f\in S^{k,(q,p)}([0,T]\times \mathbb{G})$.

Step 4. Uniqueness of a solution: it suffices to prove that if $u\in \tilde{S}^{k+2,(q,p)}([0,T]\times \mathbb{G})$ is a solution to the equation \eqref{hypo heat} with $f=0$, then $u=0$. Choose the approximation $u_n$, each of which is smooth and has compact support, converging to $u$ in $\tilde{S}^{k+2,(q,p)}([0,T]\times \mathbb{G})$ norm. It follows that
\begin{align*}
\norm{(u_n)_t-Lu_n}_{S^{k,(q,p)}} \rightarrow 0
\end{align*}
as $n\rightarrow \infty$. Since $u_n = Q((u_n)_t-Lu_n)$, according to the estimate \eqref{3131}, we have
\begin{align*}
\norm{u_n}_{\tilde{S}^{k+2,(q,p)}([0,T]\times \mathbb{G})} \rightarrow 0
\end{align*}
as $n\rightarrow \infty$. Thus, $\norm{u}_{\tilde{S}^{k+2,(q,p)}([0,T]\times \mathbb{G})}=0$, which concludes the proof.
\end{proof}
One can also derive the Sobolev embedding theorems for the spaces $S^{k,(q,p)}([0,T]\times \mathbb{G})$. We refer to Appendix \ref{section b} for the statement and  proof. The Sobolev embedding Theorem \ref{2.13} is a key ingredient to prove the well-posedness of the Kolmogorov PDE, which will be done in  Section \ref{section 4}.

\section{Kolmogorov PDE results} \label{section 4}

In this section, we establish the well-posedness result of the following Kolmogorov PDE:
\be \label{Kolmogorov PDE}
\begin{cases}
u_t - \frac{1}{2} Lu+\sum_{i=1}^m b^iZ_iu+\lambda u=f , \quad 0\leq t\leq T,\\
u(0,x)=0.
\end{cases}
\ee
on the homogeneous Carnot group $\mathbb{G}$ for singular functions $b$, $f$ and $\lambda \in \R$. The solution $u$ to Kolmogorov PDE \eqref{Kolmogorov PDE} plays a crucial role in proving the uniqueness of a strong solution to SDE \eqref{SDE}. In fact, this PDE appears when we apply the Zvonkin's transformation method \cite{zvo} to obtain an auxiliary SDE.  From now on, for any Banach spaces $X$, let us define 
\begin{align*}
\norm{b}_X:=\sum_{i=1}^m \norm{b_i}_X.
\end{align*}

\begin{theorem} \label{3.3}
Assume that $b$ satisfies the conditions \eqref{condition1} and \eqref{condition2} for exponents $p$ and $q$ satisfying \eqref{pq}. Then, for any $f \in S^{r-1,(q,p)}([0,T]\times \mathbb{G})$, there exists a unique solution $u\in \tilde{S}^{r+1,(q,p)}([0,T]\times \mathbb{G})$ to PDE \eqref{Kolmogorov PDE}. Furthermore, we have the following  estimate:
\be \label{a priori for PDE}
\norm{u}_{S^{r+1,(q,p)}([0,T]\times \mathbb{G})}+\norm{u_t}_{S^{r-1,(q,p)}([0,T]\times \mathbb{G})}\leq C(b,\lambda) \norm{f}_{S^{r-1,(q,p)}([0,T]\times \mathbb{G})}
\ee
\end{theorem}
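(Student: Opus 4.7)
The plan is to view equation \eqref{Kolmogorov PDE} as a perturbation of the pure heat-type equation $u_t - \tfrac{1}{2}Lu = g$ solved in Theorem \ref{3.1}, treating the first-order term $\sum_i b^i Z_i u$ and the zeroth-order term $\lambda u$ as lower-order contributions to be absorbed by a careful a priori estimate. I would first establish the a priori bound \eqref{a priori for PDE}, then deduce existence either by the method of continuity along the family $L_\sigma u := u_t - \tfrac{1}{2}Lu + \sigma\bigl(\sum_i b^i Z_i u + \lambda u\bigr)$ (solvable at $\sigma = 0$ by Theorem \ref{3.1}, with the uniform a priori bound supplying the closedness/openness of the set of solvable $\sigma$'s), or by smoothing $b$ to $b^{(n)}$, solving the resulting regular problem by standard parabolic theory on the Carnot group, and passing to the weak limit using the uniform bound. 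Uniqueness then follows from the same a priori estimate applied to the difference of two solutions with $f=0$.

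For the a priori estimate, given a solution $u \in \tilde{S}^{r+1,(q,p)}$, rewrite \eqref{Kolmogorov PDE} as
\[
u_t - \tfrac{1}{2}Lu = f - \sum_{i=1}^m b^i Z_i u - \lambda u
\]
and apply Theorem \ref{3.1} with $k = r-1$. The central task reduces to controlling $\|b^i Z_i u\|_{S^{r-1,(q,p)}}$. By the Leibniz rule for the left-invariant vector fields $Z_j$, for $|I|\le r-1$,
\[
Z_I(b^i Z_i u) = \sum_{I_1 \sqcup I_2 = I} c_{I_1,I_2}\,(Z_{I_1} b^i)\,(Z_{I_2} Z_i u),
\]
where $|I_1|\le r-1$ (so $Z_{I_1}b^i \in L^q_t L^p_x$ by \eqref{condition2}) and $|I_2 \cup \{i\}| \le r$ (so $Z_{I_2}Z_i u$ is a Carnot derivative of $u$ of order at most $r$, controlled by $\|u\|_{\tilde{S}^{r+1,(q,p)}}$). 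Using Hölder's inequality in the mixed norm together with the parabolic Sobolev embeddings announced in Theorem \ref{2.13} (which, in the strictly subcritical regime $\frac{2}{q} + \frac{Q}{p} < 1$, embed the top-order factor of $u$ into $L^\infty_{t,x}$ and provide integrability gains for intermediate orders), plus a standard interpolation inequality between $\|u\|_{L^q_t L^p_x}$ and $\|u\|_{\tilde{S}^{r+1,(q,p)}}$, yields a bound of the form
\[
\|b^i Z_i u\|_{S^{r-1,(q,p)}} \le C\,\|b\|_{S^{r-1,(q,p)}}\,\bigl(\varepsilon \|u\|_{\tilde{S}^{r+1,(q,p)}} + C_\varepsilon\,\|u\|_{L^q_t L^p_x}\bigr).
\]
Plugged back into the estimate from Theorem \ref{3.1} and with $\varepsilon$ chosen small in terms of $\|b\|_{S^{r-1,(q,p)}}$, the top-order term is absorbed on the left-hand side. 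The residual $\|u\|_{L^q_t L^p_x}$ contribution (and the $\lambda u$ term) is dominated on a short interval $[0,T_0]$ via the trivial bound $\|u\|_{L^q_t L^p_x} \le T_0 \|u_t\|_{L^q_t L^p_x}$, exactly as in Step 2 of the proof of Theorem \ref{3.1}, and one iterates across $[0,T]$ by translating the initial time to arrive at \eqref{a priori for PDE}.

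The main obstacle I expect is calibrating the Hölder/interpolation splitting in the mixed-norm parabolic setting: each intermediate-order derivative of $u$ must be placed in exactly the Lebesgue space dual to where the corresponding derivative of $b^i$ sits, and the only slack available is the strictly subcritical margin in \eqref{pq}. This is precisely the role of the parabolic Sobolev embedding Theorem \ref{2.13} from Appendix \ref{section b}: it gives enough integrability gain per Carnot derivative to close the product estimates. Once this perturbation estimate is in place, the continuation/approximation argument for existence and the difference argument for uniqueness are routine, and the $\lambda u$ term plays a harmless role that only affects the constant $C(b,\lambda)$ in \eqref{a priori for PDE}.
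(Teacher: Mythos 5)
Your overall strategy coincides with the paper's: rewrite \eqref{Kolmogorov PDE} as $u_t-\tfrac12 Lu = f-\sum_i b^iZ_iu-\lambda u$, apply Theorem \ref{3.1} with $k=r-1$, control the product $b^iZ_iu$ in $S^{r-1,(q,p)}$ by pairing the $S^{r-1,p}$ norm of $b^i$ with $L^\infty$ bounds on the Carnot derivatives of $u$ up to order $r$ supplied by the parabolic embedding Theorem \ref{2.13} (this is exactly where the strict inequality in \eqref{pq} enters), and then conclude existence and uniqueness from the a priori bound by the method of continuity. The only real divergence is how the estimate is closed. The paper avoids any absorption or smallness-of-$T$ argument: it sets $I(t)=\norm{u}^q_{S^{r+1,(q,p)}([0,t]\times\mathbb{G})}+\norm{u_t}^q_{S^{r-1,(q,p)}([0,t]\times\mathbb{G})}$, notes that the embedding gives $\norm{u}^q_{S^{r,(\infty,\infty)}([0,t]\times\mathbb{G})}\le CI(t)$, so that the product and $\lambda u$ terms are bounded by $\int_0^t\bigl(\norm{b(s)}^q_{S^{r-1,p}}+\lambda^q\bigr)I(s)\,ds$, and finishes with Gr\"onwall on $[0,T]$ in one stroke, yielding the explicit constant $\exp\bigl[C\norm{b}^q_{S^{r-1,(q,p)}}+CT\lambda^q\bigr]$; no restarting at intermediate times with nonzero data is needed. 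By contrast, your $\varepsilon$-absorption inequality, as literally stated, is not justified: in the Leibniz expansion the worst term is $b^i\,Z_IZ_iu$ with $|I|=r-1$, where the underived $b^i\in L^p_x$ must be paired with the order-$r$ derivative of $u$ in $L^\infty_x$, and there is no spatial interpolation that makes the coefficient of $\norm{u}_{\tilde S^{r+1,(q,p)}}$ arbitrarily small uniformly in $T$; the genuine sources of smallness are the factor $T^\alpha$ in Theorem \ref{2.13} (available because $u(0,\cdot)=0$) and the smallness of $\norm{b}_{L^q}$ on short time intervals. Since you also propose the short-interval-plus-iteration route (which is exactly how the paper handles the analogous auxiliary PDE in Proposition \ref{4.0}), this is a repairable calibration issue rather than a fatal gap, but the Gr\"onwall formulation is both simpler and stronger here.
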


\begin{proof}
Let us first prove an a priori estimate \eqref{a priori for PDE}. For $0\leq t\leq T$, let us define
\begin{align*}
I(t)=\norm{u}^q_{{S}^{r+1,(q,p)}([0,t]\times \mathbb{G})}+\norm{u_t}^q_{{S}^{r-1,(q,p)}([0,t]\times \mathbb{G})}.
\end{align*}
Then, using the estimate \eqref{a priori 0'}, we have
\begin{align} \label{331}
I(t) &\leq C \norm{\sum_{i=1}^m b^iZ_iu+\lambda u+f}_{S^{r-1,(q,p)}([0,t]\times \mathbb{G})}^q.
\end{align}
Since $p$ and $q$ satisfy  \eqref{pq}, according to the parabolic Sobolev embedding Theorem \ref{2.13},
\begin{align*}
\norm{u}_{S^{r,(\infty,\infty)}([0,t]\times \mathbb{G})}^q \leq C (\norm{u}_{S^{r+1,(q,p)}([0,t]\times \mathbb{G})}^q+\norm{u_t}_{S^{r-1,(q,p)}([0,t]\times \mathbb{G})}^q)=CI(t).
\end{align*}
Therefore, one can deduce that for each $1\leq i\leq m$,
\begin{align} \label{3310}
&\norm{b^iZ_iu}_{S^{r-1,(q,p)}([0,t]\times \mathbb{G})}^q=\int_0^t \norm{b^iZ_iu(s)}_{S^{r-1,p}( \mathbb{G})}^q ds \nonumber \\
&\leq \int_0^t \norm{b^i(s)}_{S^{r-1,p}( \mathbb{G})}^q\norm{Z_iu(s)}_{L^\infty( \mathbb{G})}^q ds \leq C\int_0^t \norm{b^i(s)}_{S^{r-1,p}( \mathbb{G})}^qI(s)ds.
\end{align}
Also, using Minkowski's integral inequality,
\begin{align}\label{3311}
\norm{\lambda u}_{S^{r-1,(q,p)}([0,t]\times \mathbb{G})}^q  ds \leq   C\lambda ^q \int_0^t \Big[ \int_0^s \norm{u_t(l)}_{S^{r-1,p}(\mathbb{G})}^q dl\Big]ds \leq  C \lambda^q \int_0^t I(s)ds.
\end{align}
Therefore, applying \eqref{3310} and \eqref{3311} to \eqref{331}, 
\begin{align*}
I(t)\leq C\int_0^t (\norm{b(s)}_{S^{r-1,p}( \mathbb{G})}^q+\lambda^q  )I(s)ds+C\norm{f}_{S^{r-1,(q,p)}([0,T]\times \mathbb{G})}.
\end{align*}
Using the Gr\"onwall's inequality, for each $0\leq t\leq T$,
\begin{align*}
I(t)\leq  C\norm{f}_{S^{r-1,(q,p)}([0,T]\times \mathbb{G})}\exp\big[C\norm{b}_{S^{r-1,(q,p)}([0,t]\times \mathbb{G})}^q + Ct\lambda^q\big].
\end{align*}
In particular, the case $t=T$ implies \eqref{a priori for PDE}.

Once a priori estimate \eqref{a priori for PDE} is obtained, the existence and uniqueness of a solution to the PDE \eqref{Kolmogorov PDE}  immediately follows from  the standard method of continuity (see for example \cite[Theorem 4.7]{f}). 
\end{proof}

Assume that $b$ satisfies the conditions \eqref{condition1}, \eqref{condition2}  for the exponents $p,q$ satisfying \eqref{pq}. Also, suppose that a function $f\in S^{r-1,(q,p)}([0,T]\times \mathbb{G})$ taking values in $\mathbb{G}=\R^N$ is given. This means that each Euclidean coordinate of $f$ belongs to $S^{r-1,(q,p)}([0,T]\times \mathbb{G})$. Let us now consider the following PDE:
\be \label{transform PDE}
\begin{cases}
u_t +\frac{1}{2} Lu+\sum_{i=1}^m b^iZ_iu -\lambda u= f, \quad 0\leq t\leq T, \\
u(T,x)=0.
\end{cases}
\ee
 $u$ being a solution to PDE \eqref{transform PDE} means that \eqref{transform PDE} holds in each Euclidean coordinate. According to Theorem \ref{3.3}, by reversing time, one can deduce that PDE \eqref{transform PDE} has a (unique)	 solution $\tilde{u}^{\lambda}\in \tilde{S}^{r+1,(q,p)}([0,T]\times \mathbb{G})$ taking values in $\mathbb{G}=\R^N$. We introduce an auxiliary function $\Phi^\lambda$ in the next proposition, which plays a crucial role in Section \ref{section 6}.

\begin{proposition} \label{3.5}
There exist an open set $\Omega$ containing $x_0$,  $\lambda\in \R$, and a version $u^\lambda$ of $\tilde{u}^\lambda$ such that $\Phi^\lambda(t,x):=x+u^\lambda(t,x)$ satisfies the following properties: \\
(i) $\Phi^{\lambda}$ is continuous in $(t,x)$ and $\Phi^{\lambda}(t,\cdot)$ is $C^1$ for each $0\leq t\leq T$. \\
(ii) $\Phi^{\lambda}(t,\cdot)$ is a $C^1$ diffeomorphism from $\Omega$ onto its image for each $0\leq t\leq T$.\\
(iii) For each $0\leq t\leq T$,
\begin{align*}
\frac{1}{2}\leq \norm{\nabla \Phi^{\lambda}(t,\cdot)}_{L^\infty(\Omega)}\leq 2, \quad \frac{1}{2}\leq \norm{\nabla (\Phi^{\lambda})^{-1}(t,\cdot)}_{L^\infty(\Phi^{\lambda}(t,\Omega))}\leq 2.
\end{align*} 
\end{proposition}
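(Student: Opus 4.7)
The plan is to show that for $\lambda$ sufficiently large, $\tilde u^\lambda$ becomes small in a norm that controls the Euclidean $C^1$ norm of $u^\lambda$ on a bounded neighborhood of $x_0$. Once this is arranged, $\Phi^\lambda(t,\cdot) = \mathrm{Id} + u^\lambda(t,\cdot)$ is a small $C^1$ perturbation of the identity, so $\nabla \Phi^\lambda = I + \nabla u^\lambda$ has operator norm close to $1$, is invertible with inverse bounds as in (iii) via Neumann series, and injectivity on a convex subneighborhood follows from the mean value estimate $|\Phi^\lambda(t,x) - \Phi^\lambda(t,y)| \ge \tfrac12 |x-y|$.

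The crux is therefore a refined a priori estimate exposing the $\lambda$-dependence, since the constant $C(b,\lambda)$ in Theorem \ref{3.3} grows in $\lambda$ and cannot be used. I would recast \eqref{transform PDE} in mild form after time reversal: setting $v(\tau,x) := \tilde u^\lambda(T-\tau, x)$, Duhamel's formula gives
\[
v(\tau) = \int_0^\tau e^{-\lambda(\tau-s)}\, \mathcal{P}_{\tau-s}\bigl[b(T-s)\cdot Zv(s) - f(T-s)\bigr]\,ds,
\]
where $\mathcal{P}_\tau$ is the heat semigroup of $\tfrac12 L$. Combining the Gaussian bounds of Theorem \ref{2.4} with the damping $e^{-\lambda(\tau-s)}$, and distributing horizontal derivatives between $\mathcal{P}_\tau$ and the forcing via the right-invariant field formula \eqref{b.5} as in Step 1 of the proof of Theorem \ref{3.1}, one obtains convolution kernels in time whose $L^{q'}[0,T]$-norms are of order $\lambda^{-\delta}$ for some $\delta>0$ under \eqref{pq}. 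Treating the linear term $b\cdot Zv$ by Picard iteration (a contraction once $\lambda$ is large), this yields $\norm{\tilde u^\lambda}_{S^{r,(\infty,\infty)}([0,T]\times\mathbb{G})} \to 0$ as $\lambda\to\infty$.

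To pass from horizontal to Euclidean derivatives on bounded sets, recall that $\{Z_1,\dots,Z_m\}$ generate $\mathfrak{g}$ via iterated brackets of depth $\le r$, and that the basis $\{Z_1,\dots,Z_N\}$ of $\mathfrak{g}$ equals $\{\partial_{x_1},\dots,\partial_{x_N}\}$ at the origin. By left-invariance and the polynomial structure of the group law, each $\partial_{x_j}$ is, on any bounded set, a polynomial linear combination of iterated horizontal operators $Z_I$ with $|I|\le r$; consequently $\norm{u^\lambda}_\infty + \norm{\nabla u^\lambda}_\infty$ on a bounded neighborhood of $x_0$ is dominated by $\norm{\tilde u^\lambda}_{S^{r,(\infty,\infty)}}$, which vanishes as $\lambda\to\infty$. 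Joint continuity of $u^\lambda$ and $\nabla u^\lambda$ in $(t,x)$, as required in (i), follows from the parabolic mixed-norm Sobolev embedding of Theorem \ref{2.13} applied to $\tilde u^\lambda\in \tilde S^{r+1,(q,p)}$, together with the horizontal-to-Euclidean H\"older embedding of Remark \ref{diff}; this simultaneously selects the continuous representative $u^\lambda$ of $\tilde u^\lambda$.

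Finally, choose $\lambda$ so large that $\norm{\nabla u^\lambda}_{L^\infty([0,T]\times \Omega_0)}\le 1/2$ on some bounded open neighborhood $\Omega_0$ of $x_0$, and let $\Omega\subset \Omega_0$ be a convex subneighborhood (a small Euclidean ball suffices). Then $\nabla \Phi^\lambda(t,\cdot)$ has operator norm in $[1/2,3/2]$ and invertible inverse of norm at most $2$, giving (iii); the mean value inequality on the convex set $\Omega$ yields injectivity, so $\Phi^\lambda(t,\cdot):\Omega\to \Phi^\lambda(t,\Omega)$ is a $C^1$ diffeomorphism as in (ii). The chief obstacle is the second paragraph: the Gr\"onwall-type argument underlying Theorem \ref{3.3} is fundamentally insufficient here, and one must extract the gain from the damping term $-\lambda u$ through a quantitative mild-form estimate.
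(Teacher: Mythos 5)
Your proposal is correct and follows essentially the same route as the paper: the paper's proof also extracts the $\lambda$-gain from the damped mild representation $u(t)=\int_t^T e^{-\lambda(s-t)}P_{s-t}(f+\sum_i b^iZ_iu)(s)\,ds$ together with the heat-kernel estimate of Lemma \ref{lemma b.2} (derivatives redistributed via \eqref{b.5}) to make $\norm{u}_{S^{r,(\infty,\infty)}}$ small, then passes to Euclidean gradients on bounded sets through commutators of the $Z_i$'s and selects the continuous $C^1$ version via approximation and Theorem \ref{2.13}. The only cosmetic differences are that the paper closes the linear term $b\cdot Zu$ with a modified Gr\"onwall lemma rather than a Picard contraction, and obtains (ii) from the inverse function theorem rather than a mean-value injectivity argument on a convex neighborhood.
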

\begin{proof}
Throughout the proof, in order to alleviate the notations,  we use  $\tilde{u}$ and $\Phi$ instead of $\tilde{u}^{\lambda}$ and $\Phi^{\lambda}$, respectively.

Step 1. Proof of the property (i): let us prove that  there exists a continuous version $u$ of $\tilde{u}$ such that $u(t,\cdot)$ is $C^1$ for each $t$. It suffices to show that for the arbitrary bounded and open set $U$ in $\R^N$, there exists a version $u$ of $\tilde{u}$ such that $u$ is continuous on $[0,T]\times U$ and $u(t,\cdot)\in C^1(U)$. Choose a smooth approximation $u_n$ converging to $\tilde{u}$ in  $\tilde{S}^{r+1,(q,p)}([0,T]\times \mathbb{G})$ norm. According to Theorem \ref{2.13}, for any indices $|I|\leq r$,
\begin{align}\label{35200}
\norm{Z_I (u_n-u_m)}_{L^\infty([0,T]\times \mathbb{G})} \leq C\norm{u_n-u_m}_{\tilde{S}^{r+1,(q,p)}([0,T]\times \mathbb{G})}.
\end{align}
Since each standard vector field on $\R^N$ can be written as commutators of $Z_i$'s up to order $r$, it follows from \eqref{35200} that 
\begin{align*}
\norm{\nabla(u_n-u_m)}_{L^\infty([0,T]\times U)} \leq C \norm{u_n-u_m}_{\tilde{S}^{r+1,(q,p)}([0,T]\times \mathbb{G})}.
\end{align*}
for some constant $C=C(U)$. This implies that there exists $w\in C_b([0,T]\times U)$ such that 
\begin{align}  \label{3500}
\norm{w-\nabla u_n}_{L^\infty([0,T] \times U)} \rightarrow 0
\end{align} 
as $n\rightarrow 0$.  Also, since the sequence $\{u_n\}$ is Cauchy in $L^\infty([0,T]\times \mathbb{G})$ norm by Theorem \ref{2.13}, there exists $u\in C_b([0,T]\times \mathbb{G})$, which is a version of $\tilde{u}$, such that as $n\rightarrow \infty$,
\begin{align} \label{3501}
\norm{u-u_n}_{L^\infty([0,T] \times \mathbb{G})} \rightarrow 0.
\end{align} 
Thanks to \eqref{3500} and \eqref{3501}, for each $t$, $u(t,\cdot)$ is $C^1$ on $U$ and its spatial  derivative is $w(t,\cdot)$. 

Step 2. Estimate on $\norm{u}_{S^{r,(\infty,\infty)}}$: from now on, we denote $u$ by a function selected in the Step 1. We now claim that for arbitrary $\epsilon>0$, there exists a sufficiently large $\lambda$ such that
\be \label{3.5claim}
\norm{Z_Iu}_{L^\infty([0,T]\times \mathbb{G})} \leq \epsilon
\ee 
holds for all indices $|I|\leq r$. 
We have the following representation formula for $u$:
\begin{align*}
u(t)=\int_t^T e^{-\lambda (s-t)}P_{s-t}(f+\sum_{i=1}^m b^iZ_iu)(s) ds
\end{align*}
(see for example \cite[Lemma 3.4]{ff3}).
Differentiating this in $Z_I$ ($|I|\leq r$) directions, 
\be \label{3.5rep}
Z_Iu(t)=\int_t^T e^{-\lambda (s-t)}Z_IP_{s-t}(f+\sum_{i=1}^m b^iZ_iu)(s)ds.
\ee 
Note that for  $g\in S^{r-1,p}(\mathbb{G})$, according to Lemma \ref{lemma b.2},
\begin{align*}
\norm{Z_IP_tg}_{L^\infty(\mathbb{G})}&=\norm{g*Z_Ip_t}_{L^\infty(\mathbb{G})} \leq Ct^{-(\frac{Q}{2p}+\frac{1}{2})}\norm{g}_{S^{r-1,p}(\mathbb{G})}
\end{align*}
for each $0\leq t\leq T$ and any indices $|I|\leq r$. Therefore, applying this to \eqref{3.5rep}, we have
\begin{align*}
&\sum_{|I|\leq r} \norm{Z_Iu(t)}_{L^\infty(\mathbb{G})} \leq \sum_{|I|\leq r}\int_t^T e^{-\lambda (s-t)}\norm{Z_IP_{s-t}(f+\sum_{i=1}^m b^iZ_iu)(s)}_{L^\infty(\mathbb{G})} ds \\
&\leq  C\int_t^T e^{-\lambda (s-t)}(s-t)^{-(\frac{Q}{2p}+\frac{1}{2})}\norm{(f+\sum_{i=1}^m b^iZ_iu)(s)}_{S^{r-1,p}(\mathbb{G})} ds \\
&\leq C\int_t^T e^{-\lambda (s-t)}(s-t)^{-(\frac{Q}{2p}+\frac{1}{2})}\Big(\norm{f(s)}_{S^{r-1,p}(\mathbb{G})}+\norm{b(s)}_{S^{r-1,p}(\mathbb{G})}\sum_{|I|\leq r}\norm{Z_Iu(s)}_{L^\infty(\mathbb{G})}\Big) ds.
\end{align*}
Using the modified version of Gr\"onwall's inequality (see \cite[Lemma 3.1]{ff3}), we obtain
\be \label{gronwall}
\sum_{|I|\leq r}\norm{Z_Iu(t)}_{L^\infty(\mathbb{G})} \leq \alpha(t)+\int_t^T \alpha(s)\beta_t(s)\exp\Big[\int_t^s\beta_t(l)dl\Big]ds,
\ee 
where $\alpha(s)$ and $\beta_t(s)$ are defined by 
\begin{gather*}
\alpha(s)=C\int_s^T e^{-\lambda (l-s)}(l-s)^{-(\frac{Q}{2p}+\frac{1}{2})}\norm{f(l)}_{S^{r-1,p}(\mathbb{G})}dl, \\
\beta_t(s)=Ce^{-\lambda (s-t)}(s-t)^{-(\frac{Q}{2p}+\frac{1}{2})}\norm{b(s)}_{S^{r-1,p}(\mathbb{G})}.
\end{gather*}
If we denote $q'$ by a conjugate exponent of $q$, then by H\"older's inequality,
\begin{align*}
\alpha(t)&=C\int_t^T e^{-\lambda (s-t)}(s-t)^{-(\frac{Q}{2p}+\frac{1}{2})}\norm{f(s)}_{S^{r-1,p}(\mathbb{G})}ds \\
&\leq C\Big[\int_0^T e^{-q'\lambda s}s^{-q'(\frac{Q}{2p}+\frac{1}{2})}ds\Big]^{1/q'}\norm{f}_{S^{r-1,(q,p)}([0,T]\times \mathbb{G})}.
\end{align*}
Since the condition \eqref{pq} implies that $q'(\frac{Q}{2p}+\frac{1}{2})<1$,
one can easily check that 
\begin{align*}
\lim_{\lambda \rightarrow \infty} \int_0^T e^{-q'\lambda s}s^{-q'(\frac{Q}{2p}+\frac{1}{2})}ds = 0,
\end{align*} which implies that
\begin{align} \label{4200}
\lim_{\lambda \rightarrow \infty} \Big[\sup_{0\leq t\leq T} \alpha(t)\Big] = 0.
\end{align} 
Similarly, applying the H\"older's inequality as above,
\begin{align}\label{4201}
\int_t^T \beta_t(s)ds \leq C\norm{b}_{S^{r-1,(q,p)}([0,T]\times \mathbb{G})}<\infty.
\end{align}
Therefore,  \eqref{gronwall}, \eqref{4200}, and \eqref{4201} imply that for sufficiently large $\lambda$, \eqref{3.5claim} holds for all indices $|I|\leq r$.

Step 3. Proof of the properties (ii) and (iii): since each standard vector field on $\R^N$ can be written as a linear combination of commutators of $Z_i$'s with order $\leq r$, for any bounded set $U$ in $\R^N$ containing $x_0$, there exists a constant $C=C(U)$ satisfying
\begin{align*}
\norm{\nabla u}_{L^\infty([0,T]\times U)} \leq C \norm{u}_{S^{r,(\infty,\infty)}([0,T]\times U)}.
\end{align*} 
Therefore, thanks to the claim \eqref{3.5claim} proved in  Step 2, for sufficiently large $\lambda$, we have
\begin{align}\label{end}
\norm{\nabla u}_{L^\infty([0,T]\times U)} \leq \frac{1}{2},
\end{align}
which immediately implies the first inequality in the condition (iii).
Since $\nabla \Phi(t,\cdot)$ is continuous and non-singular on $U$, there exists an open set $\Omega \subset U$ containing $x_0$ such that $\Phi(t,\cdot)$ is $C^1$ diffeomorphism from $\Omega$ onto its image according to the inverse function theorem. 
Also, using \eqref{end} and the identity
\begin{align*}
\nabla\Phi^{-1}(t,x)=[\nabla\Phi(t,\Phi^{-1}(t,x))]^{-1}=[I+\nabla u(t,\Phi^{-1}(t,x))]^{-1},
\end{align*} 
we obtain the second inequality in the condition (iii).
This concludes the proof. 
\end{proof}

\begin{remark} \label{remark36}
Since we assumed  that $b^i$'s have compact support (see Remark \ref{remark1.3}), and $Z_i$'s are smooth vector fields,  each Euclidean coordinate of $b$ belongs to $S^{r-1,(q,p)}([0,T]\times \mathbb{G})$. Thus, Proposition \ref{3.5} is  applicable for $f=b$. In fact, there exists $u\in \tilde{S}^{r+1,(q,p)}([0,T]\times \mathbb{G})$ taking values in $\R^N$, which is $C^1$ in $x$,   satisfying
\begin{align} \label{314}
\begin{cases}
u_t + \frac{1}{2}Lu+\sum_{i=1}^m b^iZ_iu -\lambda u = -b,\quad 0\leq t\leq T, \\
u(T,x)=0.
\end{cases}
\end{align}
Also, there exist $\lambda\in \R$ and an open set $\Omega$ in $\R^N$ containing $x_0$ such that  $\Phi(t,x)=x+u(t,x)$ satisfies  (i), (ii), and (iii) in Proposition \ref{3.5}. From now on, we use these notations $u$, $\Phi$, and $\Omega$. Finally, we choose versions of $u_t$, $Z_iu$ ($1\leq i\leq m$), $Lu$  such that $t$-a.e., $u_t(t,\cdot)$, $Z_iu(t,\cdot)$ ($1\leq i\leq m$), $Lu(t,\cdot)$ are continuous. In fact,  $u_t,Z_iu,Lu\in S^{r-1,(q,p)}([0,T]\times \mathbb{G})$, which implies that  $u_t(t,\cdot)$, $Z_iu(t,\cdot)$, $Lu(t,\cdot)\in S^{r-1,p}(\mathbb{G})$ for almost every $t$, and thus such versions can be obtained according to Theorem \ref{2.8}. Since  the left hand side of \eqref{314} and $b(t,\cdot)$ are both continuous in $x$ for $t$-a.e (see Remark \ref{remark1.3}), it follows that $t$-a.e, the equation \eqref{314} is satisfied for every $x\in \mathbb{G}$.
\end{remark}

\section{It\^o's formula for singular functions} \label{section 5}
 In order to prove the strong uniqueness of SDE \eqref{SDE}, we use the Zvonkin's transformation method \cite{zvo} to obtain an auxiliary SDE. As mentioned in Section \ref{section 2}, this auxiliary SDE is more tractable than the original SDE \eqref{SDE} since it possesses a more regular drift coefficient. When we use the Zvonkin's transformation method, a function to which we apply the It\^o's formula is not as regular.  In order to overcome this problem, we need to  establish the It\^o's formula for a large class of singular functions.

The key ingredient to obtain the It\^o's formula for non-smooth functions is a Krylov-type estimate \cite{krylov}. This type of estimate has been used successfully to prove the well-posedness of a singular SDE with the non-degenerate noise (see for example \cite{zhang1}).
In next proposition, we establish a Krylov-type estimate for the degenerate diffusion case \eqref{SDE}. Since we are working on the homogeneous Carnot group and the SDE \eqref{SDE} possesses the degenerate diffusion, the proof involves some technical difficulties.

\begin{proposition} \label{4.0}
Assume that  $b$ satisfies the conditions \eqref{condition1}, \eqref{condition2} for the exponents $p,q$ satisfying \eqref{pq}.
Suppose that $X_t$ is a solution to SDE \eqref{SDE}. Then, for each $0\leq s\leq t\leq T$, the estimate
\be \label{itotanaka}
\E \Big[\int_s^t f(r,X_r) dr \Big \vert \mathcal{F}_s\Big]\leq  C(t-s)^{1-(\frac{2}{q}+\frac{Q}{p})}\norm{f}_{L^{q/2}([0,t], L^{p/2}(\mathbb{G}))}
\ee 
holds for any $f\in L^{q/2}([0,t], L^{p/2}(\mathbb{G}))$  such that $f(r,\cdot)$ is continuous for a.e. $r\in [0,t]$. Here, a constant $C$ is independent of $s,t$, and a function $f$.
\end{proposition}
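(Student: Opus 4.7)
The strategy is the classical Krylov argument adapted to the hypoelliptic, mixed-norm setting: I set up a backward Kolmogorov PDE with source $f$, use It\^o's formula to represent the conditional expectation as the PDE solution evaluated at $(s, X_s)$, and then control that solution in $L^\infty$ with the correct time scaling. By linearity and density it suffices to consider smooth, compactly supported $f$, for which I solve
\begin{equation*}
u_\tau + \tfrac{1}{2} Lu + \sum_{i=1}^m b^i Z_i u = -f, \qquad u(t, \cdot) = 0
\end{equation*}
on $[0,t]$. Theorem \ref{3.3} (applied in reverse time, as in Remark \ref{remark36}) provides $u \in \tilde{S}^{r+1,(q,p)}([0,t]\times\mathbb{G})$ with continuous horizontal derivatives. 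Smoothly approximating $u$ (or the data) and using that, in Stratonovich form, the generator of $X_r$ is exactly $\tfrac{1}{2}L + \sum_i b^i Z_i$, I recover the identity
\begin{equation*}
u(s, X_s) = \E\Big[\int_s^t f(r, X_r)\,dr \,\Big|\, \mathcal{F}_s\Big].
\end{equation*}

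The main analytic step is the bound
\begin{equation*}
\|u\|_{L^\infty([s,t]\times\mathbb{G})} \leq C\,(t-s)^{1-(2/q + Q/p)}\,\|f\|_{L^{q/2}([s,t],\, L^{p/2}(\mathbb{G}))}.
\end{equation*}
Writing $P_\tau = e^{\tau L/2}$ for the sub-Laplacian semigroup, Duhamel's formula gives
\begin{equation*}
u(r,\cdot) = \int_r^t P_{\rho-r}\Big(f(\rho,\cdot) + \sum_{i=1}^m b^i(\rho,\cdot)\,Z_i u(\rho,\cdot)\Big)\,d\rho.
\end{equation*}
Self-similarity of the heat kernel (Theorem \ref{2.4}) yields $\|p_\tau\|_{L^{p/(p-2)}(\mathbb{G})} \leq C\tau^{-Q/p}$, hence by Young's convolution inequality $\|P_\tau g\|_{L^\infty} \leq C\tau^{-Q/p}\|g\|_{L^{p/2}}$. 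Applying H\"older in time with conjugate exponent $q/(q-2)$, the assumption $2/q + Q/p < 1$ in \eqref{pq} is precisely what makes $(\rho-r)^{-Q/p}$ belong to $L^{q/(q-2)}([r,t])$, and a direct computation produces the desired factor $(t-s)^{1-(2/q+Q/p)}$ for the contribution from $f$. The drift piece $\sum_i b^i Z_i u$ is absorbed via a Gr\"onwall iteration parallel to Step 2 in the proof of Proposition \ref{3.5}: compact support of $b^i$ together with the Sobolev embedding Theorem \ref{2.8} gives $b^i \in L^\infty(\mathbb{G})$, and $\|Z_i u\|_{L^\infty}$ is propagated by the same semigroup-plus-H\"older estimate, possibly after introducing a shifted equation with a $\lambda u$ term (with $\lambda$ large) so that the Gr\"onwall coefficient closes without affecting the $(t-s)$-scaling.

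The main obstacle is establishing the $L^\infty$ bound on $u$ with the sharp scaling: the mixed-norm exponents $q/2$ and $p/2$ sit exactly on the borderline imposed by \eqref{pq}, so both the H\"older integrability of $(\rho-r)^{-Q/p}$ in time and the Gr\"onwall closure of the drift term must be executed carefully to preserve the power $(t-s)^{1-(2/q+Q/p)}$. A secondary, more technical issue is the rigorous justification of It\^o's formula, since $u$ is only known to lie in $\tilde{S}^{r+1,(q,p)}$ and not in $C^{1,2}$; this is resolved by approximating $u$ by smooth functions and passing to the limit, using the parabolic Sobolev embedding to control the error terms uniformly in the approximation.
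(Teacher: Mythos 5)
Your overall architecture (represent $\E[\int_s^t f(r,X_r)dr\mid\mathcal F_s]$ by a solution of the backward Kolmogorov equation with source $f$, then bound that solution in $L^\infty$ with the factor $(t-s)^{1-(\frac2q+\frac Qp)}$) is the same as the paper's, and your treatment of the pure heat contribution is correct: $\norm{p_\tau}_{L^{p/(p-2)}(\mathbb{G})}\lesssim \tau^{-Q/p}$ plus H\"older in time with exponent $q/(q-2)$ uses \eqref{pq} exactly and yields the right power of $t-s$. The fatal gap is in the drift absorption. You propose to close the Gr\"onwall loop by propagating $\norm{Z_iu}_{L^\infty}$ ``by the same semigroup-plus-H\"older estimate''. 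But with data measured only in $L^{q/2}_tL^{p/2}_x$, the relevant kernel bound is $\norm{Z_ip_\tau}_{L^{p/(p-2)}(\mathbb{G})}\lesssim\tau^{-\frac12-\frac Qp}$, and pairing this in time with $\norm{f(\rho)}_{L^{p/2}}\in L^{q/2}_\rho$ requires $\frac{q}{q-2}\big(\frac12+\frac Qp\big)<1$, i.e.\ $\frac2q+\frac Qp<\frac12$, which is strictly stronger than \eqref{pq}; indeed if $Q<p\le 2Q$ (allowed by \eqref{pq}) the singularity $\tau^{-\frac12-\frac Qp}$ is not even locally integrable in time, so no weighted-norm variant rescues the scheme, and the parabolic embedding Theorem \ref{2.13} confirms that a sup bound on $Z_iu$ in terms of $\norm{f}_{L^{q/2}L^{p/2}}$ is simply not available under \eqref{pq}. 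The argument you are transplanting is Step 2 of Proposition \ref{3.5}, where the data sits in $L^p_x$ (in fact $S^{r-1,p}$), so the singularity is $\tau^{-(\frac{Q}{2p}+\frac12)}$ and integrability is exactly equivalent to $\frac2q+\frac Qp<1$; halving the exponents destroys this. The paper closes the loop differently: it solves $w_t+\frac12Lw+\sum_i b^iZ_iw=f$ by a contraction in $\tilde S^{2,(q/2,p/2)}$, using Theorem \ref{3.1} together with $\norm{b^iZ_iw}_{L^{q/2}L^{p/2}}\le\norm{b^i}_{L^qL^p}\norm{Z_iw}_{L^qL^p}$ and Theorem \ref{2.13}, and then extracts the $(t-s)^{1-(\frac2q+\frac Qp)}$-weighted sup bound on $w$ itself (never on its gradient) from the embedding. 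Your gradient-sup Gr\"onwall must be replaced by this mixed-norm fixed point (or an equivalent estimate at exponents $(q/2,p/2)$).

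A second, subtler issue is your reduction ``by linearity and density it suffices to consider smooth $f$'' and your justification of It\^o's formula. Norm convergence $f_n\to f$ in $L^{q/2}L^{p/2}$ does not by itself allow passage to the limit in $\E[\int_s^t f_n(r,X_r)dr\mid\mathcal F_s]$ --- controlling such functionals by mixed norms is precisely what the proposition is meant to provide, so a purely norm-based density step is circular (the paper flags exactly this in the remark following Proposition \ref{4.0}); likewise you may not invoke the It\^o formula of Theorem \ref{A.1}, which is proved later by means of this estimate. The paper avoids both problems at once: it applies the classical It\^o formula to the mollified solution $w_n=\varphi_n*w$, writes $\varphi_n*f=f_n+\sum_i[\varphi_n*(b^iZ_iw)-b^iZ_i(\varphi_n*w)]$, estimates the commutator in $L^1([s,t],L^\infty(\mathbb{G}))$ (via the H\"older continuity of $b^i$ from Theorem \ref{2.8} and Remark \ref{diff}, and $Z_iw\in L^{q'}_tL^\infty_x$), so that evaluation along $X_r$ costs nothing, and finally uses the assumed continuity of $f(r,\cdot)$, nonnegativity and Fatou to pass from $\varphi_n*f$ to $f$. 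Your sketch can be repaired along these lines (your higher-regularity $u$ would even simplify the commutator step), but as written the main analytic estimate fails in the stated generality and the limiting procedure is unjustified.
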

\begin{proof}
It suffices to prove \eqref{itotanaka} for non-negative $f\in L^{q/2}([0,t], L^{p/2}(\mathbb{G}))$ such that $f(r,\cdot)$ is continuous for a.e. $r\in [0,t]$.

Step 1. The auxiliary PDE result: let us prove that  for any $f\in L^{q/2}([0,t], L^{p/2}(\mathbb{G}))$, one can find a solution $w\in \tilde{S}^{2,(q/2,p/2)}([0,t]\times \mathbb{G})$ to the equation:
\be \label{auxiliary PDE}
\begin{cases}
w_t + \frac{1}{2} Lw+\sum_{i=1}^m b^iZ_iw=f, \quad \text{in} \ [0,t]\times \mathbb{G}, \\
w(t,x)=0,
\end{cases}
\ee
satisfying that for some constant $C$,
\be \label{04}
\norm{w}_{\tilde{S}^{2,(q/2,p/2)}([0,t]\times \mathbb{G})} \leq C\norm{f}_{L^{q/2}([0,t],L^{p/2}(\mathbb{G}))}.
\ee  
For $u\in \tilde{S}^{2,(q/2,p/2)}([0,T]\times \mathbb{G})$ (see \eqref{til} for definition), let us consider the following PDE:
\be \label{34}
\begin{cases}
w_t+\frac{1}{2}Lw=f - \sum_{i=1}^m b^iZ_iu,\quad  \text{in} \ [0,t]\times \mathbb{G}, \\
w(t,x)=0.
\end{cases}
\ee
Note that according to H\"older's inequality and the parabolic Sobolev embedding Theorem \ref{2.13},  for each  $1\leq i\leq m$,
\begin{align*}
\norm{b^iZ_iu}_{L^{q/2}([0,t],L^{p/2}(\mathbb{G}))} &\leq \norm{b^i}_{L^q([0,t],L^p(\mathbb{G}))} \norm{Z_iu}_{L^{ q}([0,t],L^{ p}(\mathbb{G}))}  \\
&\leq CT^{\frac{1}{2}[1-(\frac{2}{q}+\frac{Q}{p})]}\norm{b^i}_{L^q([0,t],L^p(\mathbb{G}))} \norm{u}_{\tilde{S}^{2,(q/2,p/2)}([0,t]\times \mathbb{G})}.
\end{align*}
Therefore, the right hand side  of  PDE \eqref{34} belongs to $L^{q/2}([0,T],L^{p/2}(\mathbb{G}))$. Applying Theorem \ref{3.1}, let us define
$F(u):=w\in \tilde{S}^{2,(q/2,p/2)}([0,t]\times \mathbb{G})$ to be a unique solution to the PDE \eqref{34}. For $u_1, u_2\in \tilde{S}^{2,(q/2,p/2)}([0,t]\times \mathbb{G})$, according to the estimate \eqref{a priori 0'}, we have
\begin{multline*}
\norm{F(u_1)-F(u_2)}_{\tilde{S}^{2,(q/2,p/2)}([0,t]\times \mathbb{G})} \\
\leq C\max \{t,1\} t^{\frac{1}{2}[1-(\frac{2}{q}+\frac{Q}{p})]} \norm{b}_{L^q([0,t],L^p(\mathbb{G}))}\norm{u_1-u_2}_{\tilde{S}^{2,(r,s)}([0,t]\times \mathbb{G})}.
\end{multline*}
It follows that for a small enough $t$, a map $u \rightarrow F(u)$ is a strict contraction on $\tilde{S}^{2,(q/2,p/2)}([0,t]\times \mathbb{G})$. Thus, for a sufficiently small $0<T_1\leq t$, $F:\tilde{S}^{2,(q/2,p/2)}([t-T_1,t]\times \mathbb{G}) \rightarrow \tilde{S}^{2,(q/2,p/2)}([t-T_1,t]\times \mathbb{G})$ has a unique fixed point $u$ (note that \eqref{34} is a backward PDE). For such $T_1$ and $u$, we have
\begin{align*}
\norm{u}_{\tilde{S}^{2,(q/2,p/2)}([t-T_1,t]\times \mathbb{G})} &\leq C\max \{T_1,1\} \norm{f-\sum_{i=1}^m b^iZ_iu}_{L^{q/2}([t-T_1,t],L^{p/2}(\mathbb{G}))} \\
&\begin{aligned}
\leq C&\max \{T_1,1\} (\norm{f}_{L^{q/2}([t-T_1,t],L^{p/2}(\mathbb{G}))} \\
&+CT_1^{\frac{1}{2}[1-(\frac{2}{q}+\frac{Q}{p})]}\norm{b}_{L^q([t-T_1,t],L^p(\mathbb{G}))} \norm{u}_{\tilde{S}^{2,(q/2,p/2)}([t-T_1,t]\times \mathbb{G})}) 
\end{aligned}
\end{align*}
For small enough $T_1$, we have the estimate \eqref{04} with $u$ in place of $w$ on the interval $[t-T_1,t]$. We then redefine $u(t-T_1,x)=0$, and repeat the aforementioned argument  to obtain a solution defined on the whole interval $[0,t]$ and the estimate \eqref{04}.

Step 2. Regularization processes: since $w$ is not smooth in general, the standard It\^o's formula is not applicable to a function $w$. In order to overcome this problem,  we  take a nonnegative test function $\varphi \in C^\infty_c(\mathbb{G})$,  and introduce  mollifiers $\varphi_n(x):=n^Q\varphi(D(n)x)$.  Then, define regularized functions
 \begin{align*}
w_n(t,x):=(\varphi_n*w)(t,x)=\int_{\mathbb{G}} \varphi_n(x\circ y^{-1})w(t,y)dy.
\end{align*}    If we denote $f_n$ by 
\begin{align} \label{520}
f_n:=(w_n)_t+\sum_{i=1}^m b^iZ_iw_n +\frac{1}{2}Lw_n,
\end{align}  
then by It\^o's formula, we have
\begin{align}
w_n(t,X_t)&-w_n(s,X_s) \nonumber \\
&=\int_s^t((w_n)_t+\sum_{i=1}^m b^iZ_iw_n +\frac{1}{2}Lw_n)(r,X_r) dr+\int_s^t\sum_{i=1}^m Z_iw_n(r,X_r) dB^i_r \nonumber \\
&=\int_s^t f_n(r,X_r) dr+\int_s^t  \sum_{i=1}^m Z_iw_n(r,X_r) dB^i_r. \label{00}
\end{align}
Note that using \eqref{04} and Theorem \ref{2.13}, one can deduce that $Z_iw\in L^q([0,t],L^p(\mathbb{G}))$ for each $1\leq i\leq m$. Thus, if we denote $p'$ by a conjugate exponent of $p$, then for each $n$, 
\begin{align} \label{521}
\norm{Z_iw_n}^q_{L^q([0,t],L^\infty(\mathbb{G}))} &= \int_0^t \norm{\varphi_n * Z_iw}_{L^\infty(\mathbb{G})}^q dr \nonumber \\
&\leq \int_0^t \norm{\varphi_n}_{L^{p'}(\mathbb{G})}^q \norm{Z_iw}_{L^p(\mathbb{G})}^q dr \nonumber \\
&< \norm{\varphi_n}_{L^{p'}(\mathbb{G})}^q \norm{Z_iw}^q_{L^q([0,t],L^p(\mathbb{G}))}<\infty
\end{align} 
(note that  $\norm{Z_iw_n}_{L^q([0,t],L^\infty(\mathbb{G}))}$ may not be uniformly bounded in $n$). This implies that for each $n$, a stochastic process $r\rightarrow Z_iw_n(r,X_r)$ is square-integrable on $[0,t]$ since $q>2$  (see the condition \eqref{pq}) and
\begin{align*}
\E \Big [\int_0^t |Z_iw_n(r,X_r)|^2dr\Big] \leq \int_0^t \norm{Z_iw_n(r,\cdot)}_{L^\infty(\mathbb{G})}^2 dr 
 =  \norm{Z_iw_n}^2_{L^2([0,t],L^\infty(\mathbb{G}))} < \infty.
\end{align*}
Therefore, one can deduce that
\begin{align*}
\E\Big[\int_s^t  \sum_{i=1}^m Z_iw_n(r,X_r) dB^i_r \Big \vert \mathcal{F}_s\Big]=0.
\end{align*} 
Using this and taking a conditional expectation with respect to $\mathcal{F}_s$ in \eqref{00}, we obtain
\begin{align}
\E \Big[\int_s^t f_n(r,X_r)&dr\Big \vert \mathcal{F}_s\Big]=\E [w_n(t,X_t)-w_n(s,X_s) | \mathcal{F}_s] \nonumber \\
&\leq 2\sup_{r\in [s,t]}\norm{w_n(r,\cdot)}_{L^\infty(\mathbb{G})} \nonumber \\
&\leq C (t-s)^{1-(\frac{2}{q}+\frac{Q}{p})}(\norm{w_n}_{S^{2,(q/2,p/2)}([0,t]\times \mathbb{G})}+\norm{(w_n)_t}_{L^{q/2}([0,t],L^{p/2}(\mathbb{G}))}) \nonumber\\
&\leq C (t-s)^{1-(\frac{2}{q}+\frac{Q}{p})}(\norm{w}_{S^{2,(q/2,p/2)}([0,t]\times \mathbb{G})}+\norm{w_t}_{L^{q/2}([0,t],L^{p/2}(\mathbb{G}))}) \nonumber \\
&\leq C(t-s)^{1-(\frac{2}{q}+\frac{Q}{p})}\norm{f}_{L^{q/2}([0,t], L^{p/2}(\mathbb{G}))}. \label{01}
\end{align}
Here, we used  Theorem \ref{2.13} in the third line, convolution inequality in the fourth line, and \eqref{04} in the last line (note that $w_n(t,x)=0$). 

Now, we establish the commutator estimate.
For a.e. $r\in [s,t]$ and any $x\in \mathbb{G}$, we have that for some $0<\beta<1$,
\begin{align} \label{beta}
|\varphi_n*(b^i Z_iw) -b^i Z_i&(\varphi_n*w)|(r,x) \nonumber \\
&=\Big \vert \int_{\mathbb{G}} (b^i(r,y^{-1} \circ x)-b^i(r,x))Z_iw(r,y^{-1} \circ x)\varphi_n(y)dy \Big \vert \nonumber \\
&\leq C\int_\mathbb{G} \norm{b^i(r,\cdot)}_{S^{r-1,p}(\mathbb{G})} \norm{y}^\beta   |Z_iw(r, y^{-1} \circ x)||\varphi_n(y)| dy\nonumber \\
&\leq C\norm{b^i(r,\cdot)}_{S^{r-1,p}(\mathbb{G})}\norm{Z_iw(r,\cdot)}_{L^{\infty}(\mathbb{G})} \norm{\norm{y}^\beta \varphi_n(y)}_{L^{1}(\mathbb{G})}.
\end{align}
Here, we used  Sobolev embedding Theorem \ref{2.8} and Remark \ref{diff} in the third line (we have $(r-1)p>Q$: see Remark \ref{remark1.3}), and H\"older's inequality in the last line.  

Therefore, integrating \eqref{beta} in time and then applying H\"older's inequality, 
\begin{align} \label{5202}
&\norm{\varphi_n*(b^i Z_iw) -b^i Z_i(\varphi_n *w)}_{L^1([s,t],L^\infty(\mathbb{G}))} \nonumber \\
&\leq C\norm{\norm{y}^\beta \varphi_n(y)}_{L^{1}(\mathbb{G})}\norm{b^i}_{L^q([s,t],S^{r-1,p}(\mathbb{G}))}\norm{Z_iw}_{L^{q'}([s,t],L^{\infty}(\mathbb{G}))}
\end{align}
($q'$ is the conjugate exponent of $q$). Note that $q'<2$ since $q>2$ (see the condition \eqref{pq}). This implies that
\begin{align*}
\frac{2}{q/2}+\frac{Q}{p/2} < 2 < 1+\frac{2}{q'}.
\end{align*}
Thus, since $w\in \tilde{S}^{2,(q/2,p/2)}([0,t]\times \mathbb{G})$ and according to Theorem  \ref{2.13}, 
\begin{align}\label{5200}
\norm{Z_iw}_{L^{q'}([s,t],L^\infty(\mathbb{G}))}<\infty.
\end{align}
Also, it is obvious that
\begin{align*} 
\norm{\norm{y}^\beta \varphi_n(y)}_{L^{1}(\mathbb{G})}&=n^Q \int_\mathbb{G} \norm{y}^\beta  \varphi(D(n)y)) dy =n^{-\beta }\int_\mathbb{G} \norm{z}^{\beta } \varphi (z) dz,
\end{align*}
where the last identity is obtained by the change of variable $D(n)y=z$. Since $\varphi\in C^\infty_c(\mathbb{G})$,
\begin{align*}
\int_\mathbb{G} \norm{z}^{\beta } \varphi (z) dz<\infty,
\end{align*}
which implies that
\begin{align} \label{5201}
\lim_{n\rightarrow \infty}\norm{\norm{y}^\beta \varphi_n(y)}_{L^{1}(\mathbb{G})} = 0.
\end{align}
Therefore, using  \eqref{condition2}, \eqref{5202}, \eqref{5200}, and \eqref{5201}, we have
\be  \label{02}
\lim_{n\rightarrow \infty} \norm{\varphi_n *(b^i Z_iw) -b^i Z_i(\varphi_n *w)}_{L^1([s,t],L^\infty(\mathbb{G}))} =0.
\ee

Step 3. Proof of the estimate \eqref{itotanaka}:  since $f(r,\cdot)$ is continuous for $r$-a.e.,  $(\varphi_n * f)(r,x) \rightarrow f(r,x)$ everywhere in $x\in \mathbb{G}$ for $r$-a.e. This implies that $r$-a.e., $(\varphi_n * f)(r,X_r)\rightarrow f(r,X_r)$ for any realization $\omega\in \Omega$. Since we assumed that $f$ is non-negative  and $\varphi \geq 0$, it follows that $\varphi_n * f \geq 0$. Thus, according to the Fatou's lemma, for any realization $\omega\in \Omega$,
\begin{align}\label{525}
\int_s^t f(r,X_r)dr \leq \liminf_{n\rightarrow \infty} \int_s^t (\varphi_n*f)(r,X_r)dr.
\end{align}
Applying Fatou's lemma for the conditional expectation, 
\begin{align} \label{526}
\E\Big[\liminf_{n\rightarrow \infty}  \int_s^t (\varphi_n*f)(r,X_r)dr \Big \vert \mathcal{F}_s\Big] \leq  \liminf_{n\rightarrow \infty} \E\Big[ \int_s^t (\varphi_n*f)(r,X_r)dr \Big \vert \mathcal{F}_s\Big].
\end{align}
From \eqref{525} and \eqref{526}, we have
\begin{align} \label{527}
\E \Big [\int_s^t f(r,X_r)dr \Big \vert \mathcal{F}_s\Big] \leq \liminf_{n\rightarrow \infty} \E\Big[ \int_s^t (\varphi_n*f)(r,X_r)dr \Big \vert \mathcal{F}_s\Big].
\end{align}
On the other hand, it is easy to check that $\varphi_n * f$ can be written as
\begin{align*}
\varphi_n *f=f_n+\sum_{i=1}^m (\varphi_n*(b^i Z_iw) -b^i Z_i(\varphi_n*w)).
\end{align*}
Therefore, using this, \eqref{01}, \eqref{02}, and \eqref{527},
we obtain
\begin{align*}
\E &\Big[\int_s^t f(r,X_r) dr \Big \vert \mathcal{F}_s\Big] \leq \liminf_{n\rightarrow \infty} \E \Big[\int_s^t (\varphi_n*f)(r,X_r) dr \Big \vert \mathcal{F}_s\Big] \\
&\leq  \limsup_{n\rightarrow \infty} \E \Big[\int_s^t f_n(r,X_r) dr \Big \vert \mathcal{F}_s\Big] +\sum_{i=1}^m \E \Big[\int_s^t (\varphi_n*(b^i Z_iw) -b^i Z_i(\varphi_n*w))(r,X_r) dr \Big \vert \mathcal{F}_s\Big] \\
&\leq C(t-s)^{1-(\frac{2}{q}+\frac{Q}{p})}\norm{f}_{L^{q/2}([0,t], L^{p/2}(\mathbb{G}))}.
\end{align*}
This concludes the proof.
\end{proof}

\begin{remark}
Let us denote $X_t$ by a solution to SDE \eqref{SDE}. It is a priori not clear whether or not the integral
$\int_0^t f(s,X_s)ds$
depends on the version of $f$. In other words, it is not obvious whether or not
\begin{align*}
\int_0^t f(s,X_s)ds= \int_0^t g(s,X_s)ds
\end{align*}
holds when $f=g$ a.e. In  Proposition  \ref{4.0}, we proved the estimate \eqref{itotanaka} for continuous functions $f$ in order that $(\varphi_n * f) (r,X_r)$ converges to $f(r,X_r)$ for any realization, which enables us to apply the Fatou's lemma in Step 3 of the proof. Note that in general $(\varphi_n * f)(r,\cdot)$ converges to $f(r,\cdot)$ only at the Lebesgue point of $f(r,\cdot)$, and it is not  a priori clear whether or not $(\varphi_n * f) (r,X_r)$ converges to $f(r,X_r)$ almost surely.
\end{remark}

Since  Theorem \ref{main} is a local statement, we introduce the following notion of a solution, which is useful for our purpose:

\begin{definition}
Suppose that $\tau$ is a $\mathcal{F}_t$-stopping time.  $X_t$ is called a \it{$\tau$-solution} to SDE
\begin{align*}
dX_t=b(s,X_s)ds+\sigma(s,X_s)dB_s,\quad 0\leq t\leq T
\end{align*}
if $w$-almost surely,
\begin{align*}
X_t-X_0=\int_0^{t \wedge \tau} b(s,X_s)ds+\int_0^{t \wedge \tau} \sigma(s,X_s)dB_s
\end{align*}
holds for all $0\leq t\leq T$. 
\end{definition}
Note that if $X_t$ is a solution to SDE \eqref{SDE} and $\tau$ is any $\mathcal{F}_t$-stopping time, then $Y_t := X_{t \wedge \tau}$ is a $\tau$-solution to SDE \eqref{SDE}. The notion of $\tau$-solution is useful when we consider a stochastic process before the time at which the process exits a certain region.

\begin{remark} \label{remark 5.3}
For $\mathcal{F}_t$-stopping time $\tau$, let us denote  $\mathcal{G}_t:=\mathcal{F}_{t\wedge \tau}$. Assume that $b$ satisfies the conditions \eqref{condition1}, \eqref{condition2} for the exponents $p,q$ satisfying \eqref{pq}, and $X_t$ is $\tau$-solution to SDE \eqref{SDE}. Following the proof of Proposition \ref{4.0}, one can conclude that for any $f\in L^{q/2}([0,t], L^{p/2}(\mathbb{G}))$ such that $f(r,\cdot)$ is continuous for a.e. $r\in [0,t]$, 
\be \label{itotanaka1}
\E \Big[\int_{s\wedge \tau}^{t\wedge \tau} f(r,X_r) dr \Big \vert \mathcal{G}_s\Big]\leq  C(t-s)^{1-(\frac{2}{q}+\frac{Q}{p})}\norm{f}_{L^{q/2}([0,t], L^{p/2}(\mathbb{G}))}.
\ee
Thus, the estimate \eqref{itotanaka} is a special case of \eqref{itotanaka1} with $\tau=\infty$. The Krylov-type estimate \eqref{itotanaka1} is useful to prove the strong uniqueness of SDE \eqref{SDE} in Section \ref{section 6}.

Similarly, one can also  prove that  for any $f\in S^{r-1,(q,p)}([0,t]\times \mathbb{G})$ such that $f(r,\cdot)$ is continuous for a.e. $r\in [0,t]$,
\begin{align}\label{itotakana2}
\E \Big[\int_{s\wedge \tau}^{t\wedge \tau} f(r,X_r) dr \Big \vert  \mathcal{G}_s\Big]&\leq  C(t-s)^{1-(\frac{2}{q}+\frac{Q}{p})}\norm{f}_{L^q([0,T],L^p(\mathbb{G}))} \nonumber \\
&\leq C(t-s)^{1-(\frac{2}{q}+\frac{Q}{p})}\norm{f}_{S^{r-1,(q,p)}([0,T]\times \mathbb{G})}.
\end{align}  
\end{remark}

Using the Krylov-type estimate \eqref{itotakana2}, one can derive the It\^o's formula for the mixed-norm parabolic Sobolev spaces $\tilde{S}^{r+1,(q,p)}([0,T]\times \mathbb{G})$.
\begin{theorem} \label{A.1}
Suppose that assumptions in Theorem \ref{main} are satisfied, and $X_t$ is a $\tau$-solution to  the SDE:
\begin{align*}
dX_t=b(t,X_t)dt+\sum_{i=1}^m Z_i(t,X_t) \circ dB^i_t, \quad 0\leq t\leq T.
\end{align*}
Then, for any $f\in \tilde{S}^{r+1,(q,p)}([0,T]\times \mathbb{G})$ satisfying
\begin{gather*}
f \quad \text{contiuous in} \ (t,x), \\
 (f_t+\sum_{i=1}^m b^i Z_if+\frac{1}{2}\sum_{i=1}^m Z_i^2f)(t,\cdot), \  Z_1f(t,\cdot),\cdots,Z_mf(t,\cdot) \quad \text{continuous in x for t-a.e.} ,
\end{gather*}
a process $f(t,X_t)$ is a $\tau$-solution to 
\begin{align*}
df(t,X_t)=(f_t+\sum_{i=1}^m b^i Z_if+\frac{1}{2}\sum_{i=1}^m Z_i^2f)(t,X_t)dt+ \sum_{i=1}^m Z_i  f(t,X_t) dB^i_t, \quad 0\leq t\leq T.
\end{align*}

\end{theorem}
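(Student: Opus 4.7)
The plan is to mollify $f$ in space, apply the classical Stratonovich Itô formula to the smooth approximant $f_n$, and then pass to the limit $n\to\infty$ using the Krylov-type estimates of Proposition \ref{4.0} and Remark \ref{remark 5.3}. The argument follows the same template as Steps 2--3 in the proof of Proposition \ref{4.0}, with $f$ here playing the role that $w$ played there.

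Concretely, fix a non-negative $\varphi\in C_c^\infty(\mathbb{G})$ with unit mass, set $\varphi_n(x):=n^Q\varphi(D(n)x)$, and let $f_n(t,x):=(\varphi_n *_x f(t,\cdot))(x)$. By left-invariance of the $Z_i$'s, $Z_I f_n=\varphi_n *_x(Z_I f)$ and $(f_n)_t=\varphi_n *_x f_t$; in particular $f_n$ is $C^\infty$ in $x$ and absolutely continuous in $t$ for each fixed $x$, and $f_n\to f$ in $\tilde{S}^{r+1,(q,p)}([0,T]\times\mathbb{G})$. By the parabolic Sobolev embedding Theorem \ref{2.13}, $Z_I f_n\to Z_I f$ uniformly on $[0,T]\times\mathbb{G}$ for every $|I|\leq r$. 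The classical Itô formula then gives, $P$-a.s.\ for all $0\leq t\leq T$,
\begin{equation*}
f_n(t\wedge\tau,X_{t\wedge\tau})-f_n(0,X_0) = \int_0^{t\wedge\tau}g_n(s,X_s)\,ds+\sum_{i=1}^m\int_0^{t\wedge\tau}Z_if_n(s,X_s)\,dB^i_s,
\end{equation*}
where $g_n:=(f_n)_t+\sum_{i=1}^m b^iZ_if_n+\tfrac12\sum_{i=1}^m Z_i^2 f_n$.

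The passage to the limit is term by term. The boundary term converges $P$-a.s.\ by the uniform convergence $f_n\to f$. For the stochastic integral, the BDG inequality reduces the problem to showing $\E\int_0^{t\wedge\tau}|Z_if_n-Z_if|^2(s,X_s)\,ds\to 0$; applying the Krylov estimate \eqref{itotanaka1} to the non-negative, continuous-in-$x$ function $|Z_if_n-Z_if|^2\in L^{q/2}(L^{p/2})$ gives the bound $C\,\norm{Z_if_n-Z_if}_{L^q(L^p)}^2\to 0$. For the drift term, the delicate point is that $f_t$ alone need not be continuous in $x$; exploiting left-invariance one decomposes
\begin{equation*}
g_n-g = \bigl(\varphi_n *_x g - g\bigr) + \sum_{i=1}^m\bigl[b^i(\varphi_n *_x Z_if) - \varphi_n *_x(b^iZ_if)\bigr].
\end{equation*}
The first difference vanishes in $L^q(L^p)$ by standard mollification (since $g\in L^q(L^p)$), while the commutator vanishes in $L^q(L^p)$ by the identical H\"older-continuity argument carried out in Step 2 of Proposition \ref{4.0}, invoking $b^i(t,\cdot)\in S^{r-1,p}\hookrightarrow\Gamma^\alpha$ (Theorem \ref{2.8} together with Remark \ref{diff}) and the uniform bound $\norm{Z_if}_{L^\infty([0,T]\times\mathbb{G})}<\infty$ from Theorem \ref{2.13}. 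Since $g_n-g\in S^{r-1,(q,p)}$ is continuous in $x$ for a.e.\ $t$ by construction, the Krylov estimate \eqref{itotakana2} then yields $\E\int_0^{t\wedge\tau}|g_n-g|(s,X_s)\,ds\to 0$, completing the identification of the limits and hence the proof.

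The main obstacle is precisely this grouped commutator step. Because the hypothesis provides joint continuity in $x$ only of the combination $g=f_t+\sum b^iZ_if+\tfrac12\sum Z_i^2f$ rather than of $f_t$ alone, the regularization has to be arranged so that $b^i$ and $Z_if$ remain paired when one compares $g_n$ with $g$, and the quantitative vanishing of the commutator in $L^q(L^p)$ rests on the sub-Riemannian Sobolev embedding combined with the $L^\infty$-control on $Z_if$ furnished by the parabolic embedding, exactly as in the $w$-case treated earlier.
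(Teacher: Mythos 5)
Your proof is correct and follows essentially the same route as the paper's: spatial mollification $f_n=\varphi_n * f$, the classical It\^o formula applied to $f_n$, and passage to the limit using the Krylov-type estimates \eqref{itotanaka1}--\eqref{itotakana2} for the drift and martingale terms together with the parabolic embedding (Theorem \ref{2.13}) for the boundary term. The only minor differences are cosmetic: you justify the drift convergence via the commutator decomposition of Step 2 of Proposition \ref{4.0}, obtaining $L^q(L^p)$ convergence (which suffices by the first inequality in \eqref{itotakana2}), whereas the paper asserts convergence of the mollified drift directly in $S^{r-1,(q,p)}$, and you bound the martingale term by applying \eqref{itotanaka1} to $|Z_if_n-Z_if|^2$ instead of the paper's chain through \eqref{itotakana2} and the Sobolev embedding.
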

\begin{proof}
Since $t\rightarrow f(t,X_t)$ is continuous, it suffices to check that for each $t$,
\begin{multline} \label{stra}
f(t\wedge \tau,X_{t\wedge \tau}) \\
=\int_0^{t\wedge \tau}(f_t+\sum_i b^i Z_if+\frac{1}{2}\sum_i Z_i^2f)(s,X_s) ds+\int_0^{t\wedge \tau} \sum_i Z_i f(s,X_s) dB^i_s
\end{multline}
holds almost surely. Let us approximate $f$ by smooth functions $f_n$ in $\tilde{S}^{r+1,(q,p)}([0,T]\times \mathbb{G})$ norm. More precisely, for a mollifier $\varphi_n(x):=n^Q\varphi(D(n)x)$ with $\varphi \in C^\infty_c(\mathbb{G})$, let us define $f_n:=\varphi_n * f$.  Then,  
\begin{align*}
(f_n)_t+\sum_i b^i Z_if_n+\frac{1}{2}\sum_i Z_i^2f_n \rightarrow f_t+\sum_i b^i Z_if+\frac{1}{2}\sum_i Z_i^2f \quad \text{in} \ S^{r-1,(q,p)}.
\end{align*}
Since for $t$-a.e., both $\Big[(f_n)_t+\sum_i b^i Z_if_n+\frac{1}{2}\sum_i Z_i^2f_n\Big](t,\cdot)$ and $ (f_t+\sum_i b^i Z_if+\frac{1}{2}\sum_i Z_i^2f)(t,\cdot)$ are continuous, using the estimate \eqref{itotakana2}, one can conclude that
\begin{align}
\lim_{n\rightarrow \infty} \E \Big \vert\int_0^{t\wedge \tau} ((f_n)_t+&\sum_i b^i Z_if_n+\frac{1}{2}\sum_i Z_i^2f_n)(s,X_s)ds \nonumber \\ &-\int_0^{t\wedge \tau} (f_t+\sum_i b^i Z_if+\frac{1}{2}\sum_i Z_i^2f)(s,X_s)ds\Big \vert = 0. \label{a11}
\end{align}
Also, since for $t$-a.e., both $Z_if_n(t,\cdot)$ and $Z_if(t,\cdot)$ are continuous, using  the It\^o's isometry and \eqref{itotakana2}, we have
\begin{align}
\lim_{n\rightarrow \infty} \E \Big \vert\int_0^{t\wedge \tau} \sum_i &  Z_if_n(s,X_s) dB^i_s -\int_0^{t\wedge \tau} \sum_i Z_if(s,X_s) dB^i_s\Big \vert^2 \nonumber \\= &\lim_{n\rightarrow \infty} \E \int_0^{t\wedge \tau} \Big[\sum_i Z_i(f_n-f)\Big]^2(s,X_s) ds \nonumber \\  
&\leq C\lim_{n\rightarrow \infty} \norm{\Big[\sum_i Z_i(f_n-f)\Big]^2}_{S^{r-1,(q,p)}} \nonumber \\
&\leq C\lim_{n\rightarrow \infty} \norm{f_n-f}^2_{S^{r,(2q,2p)}} \nonumber \\
&\leq C\lim_{n\rightarrow \infty}\norm{f_n-f}^2_{\tilde{S}^{r+1,(q,p)}}= 0. \label{a12}
\end{align}
Note that parabolic Sobolev embedding Theorem \ref{2.13} is applicable in the last line since $\frac{2}{q}+\frac{Q}{p} < 1+ \frac{2}{2q}+\frac{Q}{2p}$.
Furthermore, according to Theorem \ref{2.13} again,
\begin{align*}
\norm{f_n-f}_{L^\infty} \leq  \norm{f_n-f}_{S^{r,(\infty,\infty)}} \leq C\norm{f_n-f}_{\tilde{S}^{r+1,(q,p)}}.
\end{align*}
Therefore,
\begin{align}
\lim_{n\rightarrow \infty} |f_n(t\wedge \tau,X_{t\wedge \tau})-f(t\wedge \tau,X_{t\wedge \tau})|\leq \lim_{n\rightarrow \infty}  \norm{f_n-f}_{L^\infty}=0. \label{a13}
\end{align} Since $f_n$'s are smooth, the classical It\^o's formula yields that
\begin{multline}
f_n(t\wedge \tau,X_{t\wedge \tau}) \\
=\int_0^{t\wedge \tau}((f_n)_t+\sum_i b^i Z_if_n+\frac{1}{2}\sum_i Z_i^2f_n)(s,X_s) ds+\int_0^{t\wedge \tau} \sum_i Z_i f_n(s,X_s) dB^i_s.
\end{multline}
Therefore,  sending $n\rightarrow \infty$ along the appropriate subsequence using \eqref{a11}, \eqref{a12}, and \eqref{a13}, we  obtain \eqref{stra}.
\end{proof}

\section{Proof of the main theorem \ref{main}} \label{section 6}
In this section, based on the  results established in Section \ref{section 4} and \ref{section 5}, we prove the main result Theorem \ref{main}. We identify the space $\mathbb{G}$ with the Euclidean space $\R^N$, and then we do a stochastic calculus. Throughout this section, we add a time parameter $t$ to the time independent vector fields $Z_i$'s, i.e. $Z_i(t,x)=Z_i(x)$ for $0\leq t\leq T$.

\subsection{Conjugated SDE} \label{section 6.1}
In this section, we derive an auxiliary SDE transformed by the original SDE \eqref{SDE}, which is called a \it{conjugated SDE}. The advantage of this new SDE over the original SDE is that it possesses a more regular drift coefficient. This idea goes back to the Zvonkin's work \cite{zvo}, and  has been successfully used  to prove the well-posedness of SDEs with the additive noise (see for example \cite{ff3,zhang1}). In the next proposition, as in \cite{ff3,zhang1}, we obtain an auxiliary SDE  using a function $u$. Recall that functions $u$, $\Phi$, and the open set $\Omega$ are defined in Remark \ref{remark36}.

\begin{proposition} \label{4.1}
For $0\leq t\leq T$ and $x\in \Phi(t,\Omega)$, let us define vector fields $\tilde{b}$ and $\tilde{\sigma}_i$ ($1\leq i\leq m$) via
\begin{align*}
\tilde{b}(t,x)=[\lambda u+ \frac{1}{2} \sum_{i=1}^m Z_i'Z_i](t,\Phi^{-1}(t,x)), \quad
\tilde{\sigma}_i(t,x)=(
Z_i+Z_iu)(t,\Phi^{-1}(t,x))
\end{align*}
($Z_i':\R^N \rightarrow \R^{N\times N}$ is a standard derivative of the map $Z_i:\R^N \rightarrow \R^N$, and $Z_i'Z_i$ is interpreted as a product of $N\times N$ matrix $Z_i'$ and a vector $Z_i\in \R^N$).   Suppose that $X_t$ is a $\tau$-solution to SDE  \eqref{SDE} for a $\mathcal{F}_t$-stopping time $\tau$ such that $X_t\in \Omega$ for $0\leq t\leq T$. Then, $Y_t=\Phi(t,X_t)$ is a $\tau$-solution to the following SDE:
\be \label{conjugated'}
\begin{cases}
dY_t=\tilde{b}(t,Y_t)dt+\sum_{i=1}^m \tilde{\sigma}_i(t,Y_t)dB^i_t,\\
Y_0=\Phi(0,x_0).
\end{cases}
\ee
\end{proposition}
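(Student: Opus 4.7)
The plan is to apply the generalized It\^o's formula from Theorem \ref{A.1} to each Euclidean coordinate of $u(t, X_t)$, combine the result with the It\^o form of SDE \eqref{SDE}, and use the Kolmogorov PDE \eqref{314} to collapse the drift.

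First I would rewrite SDE \eqref{SDE} in It\^o form. Applying the standard Stratonovich--It\^o correction coordinatewise to each smooth vector field $Z_i$, the $\tau$-solution $X_t$ satisfies
\begin{equation*}
dX_t = \Bigl[b + \tfrac{1}{2} \sum_{i=1}^m Z_i' Z_i \Bigr](t, X_t)\, dt + \sum_{i=1}^m Z_i(X_t)\, dB^i_t.
\end{equation*}
Next I would apply Theorem \ref{A.1} to each coordinate of the vector-valued function $u$ from Remark \ref{remark36}. The required regularity $u \in \tilde{S}^{r+1,(q,p)}([0,T]\times\mathbb{G})$ is furnished by Theorem \ref{3.3}; continuity of $u$ in $(t,x)$ comes from Proposition \ref{3.5}(i); and continuity in $x$ of $u_t + \sum_i b^i Z_i u + \tfrac{1}{2} L u$ and of each $Z_i u$ for $t$-a.e. is precisely what was arranged in Remark \ref{remark36}. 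Theorem \ref{A.1} then yields
\begin{equation*}
du(t, X_t) = \Bigl[u_t + \sum_{i=1}^m b^i Z_i u + \tfrac{1}{2} L u\Bigr](t, X_t)\, dt + \sum_{i=1}^m Z_i u(t, X_t)\, dB^i_t,
\end{equation*}
and, invoking \eqref{314}, the $dt$-coefficient simplifies to $(\lambda u - b)(t, X_t)$.

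Adding the two displays and recalling $\Phi(t,x) = x + u(t,x)$, the term $b(t,X_t)$ cancels and I obtain
\begin{equation*}
d\Phi(t, X_t) = \Bigl[\lambda u + \tfrac{1}{2} \sum_{i=1}^m Z_i' Z_i \Bigr](t, X_t)\, dt + \sum_{i=1}^m (Z_i + Z_i u)(t, X_t)\, dB^i_t.
\end{equation*}
Because $X_t \in \Omega$ on the relevant time interval, Proposition \ref{3.5}(ii) lets me substitute $X_t = \Phi^{-1}(t, Y_t)$, so the right-hand side is exactly $\tilde{b}(t, Y_t)\, dt + \sum_i \tilde{\sigma}_i(t, Y_t)\, dB^i_t$, and the initial condition $Y_0 = \Phi(0, x_0)$ is immediate.

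The main delicate point is verifying the hypotheses of Theorem \ref{A.1} for the vector-valued $u$ (in particular the continuity conditions on the coefficients that appear after differentiation). This is really the reason Remark \ref{remark36} was set up the way it was, so once those versions are in hand the argument reduces to a bookkeeping computation in which the singular drift $b$ is absorbed by the lower-order terms of the It\^o expansion of $u(t, X_t)$ via the PDE \eqref{314}.
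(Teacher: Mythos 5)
Your proposal is correct and follows essentially the same route as the paper: apply the singular It\^o formula (Theorem \ref{A.1}) to $u$, use the PDE \eqref{314} to replace the drift of $u(t,X_t)$ by $\lambda u - b$, and combine with the Stratonovich--It\^o correction $\tfrac{1}{2}\sum_i Z_i'Z_i$ so that $b$ cancels in $d\Phi(t,X_t)=dX_t+du(t,X_t)$. The only cosmetic difference is the order of operations — you convert the SDE to It\^o form first and then add, whereas the paper substitutes the SDE into the expression for $u(t,X_t)$ and converts the Stratonovich integral at the end — which yields the identical computation.
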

\begin{proof}
In order to alleviate notations, we omit the summation symbol $\sum_i$. Recall that $u(t,x)$ is continuous, $(\partial_t u+ b^i Z_iu+\frac{1}{2}Lu)(s,\cdot)$, $Z_iu(s,\cdot)$ are continuous for $s$-a.e. (see Remark \ref{remark36}), and $u\in \tilde{S}^{r+1,(q,p)}([0,T]\times \mathbb{G})$. Therefore, using the It\^o's formula for non-smooth functions  Theorem \ref{A.1}, we have
\begin{align*}
&u(t,X_t)=u(0,X_0)+\int_0^{t \wedge \tau} (\partial_t u+ b^i Z_iu+\frac{1}{2}Lu)(s,X_s)ds+\int_0^{t \wedge \tau} Z_iu(s,X_s)dB_s^i \\
&= u(0,X_0)-\int_0^{t \wedge \tau} (b-\lambda u)(s,X_s)ds+\int_0^{t \wedge \tau} Z_iu(s,X_s)dB_s^i \\
&=u(0,X_0)-X_t+X_0+\int_0^{t \wedge \tau} \lambda u(s,X_s)ds+ \int_0^{t \wedge \tau}  Z_i(X_s) \circ dB^i_s+\int_0^{t \wedge \tau} Z_iu(s,X_s)dB^i_s \\
&=u(0,X_0)-X_t+X_0+\int_0^{t \wedge \tau}[ \lambda u+ \frac{1}{2} Z_i'Z_i](s,X_s)ds+ \int_0^{t \wedge \tau} (Z_i+ Z_iu)(s,X_s)dB^i_s.
\end{align*}
Since $X_t\in \Omega$ for $0\leq t\leq T$, $Y_t\in \Phi(t,\Omega)$. Therefore,
\begin{align*}
Y_t&-Y_0=\Phi(t,X_t)-\Phi(t,X_0) \\
&=\int_0^{t \wedge \tau}[ \lambda u+ \frac{1}{2}Z_i'Z_i](s,X_s)ds+ \int_0^{t \wedge \tau} (Z_i+ Z_iu)(s,X_s)dB^i_s \\
&=\int_0^{t \wedge \tau}[ \lambda u+ \frac{1}{2}Z_i'Z_i](s,\Phi^{-1}(s,Y_s))ds+ \int_0^{t \wedge \tau}  (Z_i+ Z_iu)(s,\Phi^{-1}(s,Y_s))dB^i_s \\
&=\int_0^{t \wedge \tau} \tilde{b}(s,Y_s)ds+\int_0^{t \wedge \tau}  \tilde{\sigma}_i(s,Y_s)dB_t^i.
\end{align*}
\end{proof}

\subsection{Strong uniqueness} \label{section 6.2}
Using the conjugated SDE \eqref{conjugated'},  one can prove that a strong solution to SDE \eqref{SDE} is unique:
\begin{theorem} \label{4.3}
 Suppose that $X^1_t$, $X^2_t$ are $\tau$-solutions to \eqref{SDE} for a $\mathcal{F}_t$-stopping time $\tau$ such that $X^1_t, X^2_t\in \Omega'$ for $0\leq t\leq T$. Then, $X^1_t=X^2_t$ almost surely.
\end{theorem}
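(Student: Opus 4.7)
The plan is to apply the Zvonkin-type conjugation of Proposition \ref{4.1} to both solutions, reducing the problem to pathwise uniqueness for the regularized SDE \eqref{conjugated'}, which I will then settle by the standard Itô-plus-Gronwall argument. Set $Y^i_t := \Phi(t, X^i_t)$ for $i=1,2$; since $X^i_t \in \Omega' \subset \Omega$ for $t \in [0,T]$, Proposition \ref{4.1} shows each $Y^i$ is a $\tau$-solution to \eqref{conjugated'} with matching initial datum $Y^1_0 = Y^2_0 = \Phi(0, x_0)$.

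The core step is to establish that the new coefficients $\tilde{b}(t,\cdot)$ and $\tilde{\sigma}_i(t,\cdot)$ are Lipschitz on the bounded set $\Phi(t,\Omega')$, uniformly in $t\in[0,T]$. The drift $\tilde{b} = [\lambda u + \tfrac12 \sum_i Z_i' Z_i] \circ \Phi^{-1}$ is Lipschitz because $u(t,\cdot)$ is Euclidean $C^1$ with $\norm{\nabla u}_{L^\infty} \le 1/2$ (Remark \ref{remark36} and Proposition \ref{3.5}(iii)), the $Z_i$ are smooth polynomial vector fields, and $\Phi^{-1}$ is Lipschitz. For the diffusion $\tilde\sigma_i = (Z_i + Z_i u)\circ \Phi^{-1}$, it suffices to verify that $Z_i u$ is Lipschitz in the Euclidean sense on $\Omega'$; this is where the regularity $u \in \tilde{S}^{r+1,(q,p)}$ is consumed, via the pointwise bounds $\norm{Z_J u}_{L^\infty} \le \epsilon$ for $|J|\le r$ supplied by Step 2 of Proposition \ref{3.5}, combined with the representation of each Euclidean partial derivative $\partial_k$ as a polynomial combination of iterated horizontal vector fields of order at most $r$.

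With the Lipschitz bounds in hand, I would apply the standard Itô formula to $|Y^1_t - Y^2_t|^2$ for $t \le \tau$:
\begin{align*}
|Y^1_{t\wedge\tau} - Y^2_{t\wedge\tau}|^2
&= 2\int_0^{t\wedge\tau}(Y^1_s - Y^2_s)\cdot(\tilde b(s,Y^1_s) - \tilde b(s,Y^2_s))\,ds \\
&\quad + \sum_i \int_0^{t\wedge\tau} \norm{\tilde\sigma_i(s,Y^1_s) - \tilde\sigma_i(s,Y^2_s)}^2\,ds + M_{t\wedge\tau},
\end{align*}
where $M$ is a local martingale. A standard localization to handle $M$, the Lipschitz estimates, and Gronwall's inequality yield $\E|Y^1_{t\wedge\tau} - Y^2_{t\wedge\tau}|^2 = 0$ for all $t$, so $Y^1_t = Y^2_t$ almost surely. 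Because $\Phi(t,\cdot)$ is a bijection on $\Omega$ by Proposition \ref{3.5}(ii), this forces $X^1_t = X^2_t$ almost surely.

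The main obstacle is the Lipschitz control of $\tilde\sigma_i$: the mismatch between the horizontal (sub-Riemannian) regularity of $u$ and the Euclidean regularity needed is tight, and one has to exploit carefully both the $L^\infty$ bounds for horizontal derivatives $Z_J u$ up to order $r$ and the integrability of the top-order derivatives (order $r+1$) coming from the Kolmogorov PDE theory of Section \ref{section 4}. The choice of the Sobolev-space scale $\tilde{S}^{r+1,(q,p)}$ and the condition \eqref{pq} are engineered precisely so that this step closes; everything else is then standard stochastic calculus.
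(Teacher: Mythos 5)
Your reduction via Proposition \ref{4.1} and the uniform Lipschitz bound for the drift $\tilde b$ agree with the paper, but the pivotal claim that the diffusion $\tilde\sigma_i=(Z_i+Z_iu)\circ\Phi^{-1}$ is Lipschitz on $\Phi(t,\Omega')$ uniformly in $t$ is a genuine gap, and it is in general false. A Euclidean Lipschitz bound for $Z_iu(t,\cdot)$ requires $L^\infty$ control of $\nabla Z_iu(t,\cdot)$; since each Euclidean derivative $\partial_k$ is a combination of horizontal derivatives $Z_J$ with $|J|\le r$, this needs $\norm{Z_JZ_iu}_{L^\infty}$ for $|J|\le r$, i.e.\ horizontal derivatives of $u$ of total order $r+1$. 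But $u\in\tilde S^{r+1,(q,p)}$ only places these top-order derivatives in $L^q([0,T],L^p(\mathbb{G}))$ with $p<\infty$; the $L^\infty$ bounds of Step 2 of Proposition \ref{3.5} stop at order $r$, and the available embeddings (Theorem \ref{2.8} together with Remark \ref{diff}) give at best that $Z_iu(t,\cdot)$ is Euclidean H\"older of exponent $1-\frac{Q}{rp}<1$, with a constant that is only $L^q$ in $t$. This is exactly the same obstruction as in the classical case $r=1$ (Krylov--R\"ockner, Fedrizzi--Flandoli), where $\sigma=\mathrm{Id}+\nabla u$ is Sobolev but not Lipschitz. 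If $\tilde\sigma$ were Lipschitz your It\^o--Gronwall argument on $|Y^1_t-Y^2_t|^2$ would indeed finish the proof, but since it is not, the quadratic-variation term cannot be absorbed by Gronwall, and the proposal does not close; your closing remark that one must "exploit the integrability of the top-order derivatives" names the issue but does not supply the argument.

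The paper replaces this false step by Lemma \ref{4.4}: one sets $A_t$ equal to (a constant times) the time integral of $\1_{Y^1_s\neq Y^2_s}\,|\tilde\sigma(s,Y^1_s)-\tilde\sigma(s,Y^2_s)|^2/|Y^1_s-Y^2_s|^2$ up to $\tau$, and proves $\E e^{cA_t}<\infty$ by bounding the difference quotient with the Hardy--Littlewood maximal function of $\nabla\big[\phi\cdot\{(Z_i+Z_iu)*\rho_n\}\big]$ through the pointwise inequality $|f(x)-f(y)|\le C|x-y|(\mathcal M\nabla f(x)+\mathcal M\nabla f(y))$, then invoking the Krylov-type estimate \eqref{itotanaka1} and the exponential integrability lemma of Appendix \ref{section a}, and finally passing to the limit in $n$ by Fatou. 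It\^o's formula is then applied to the weighted quantity $e^{-A_t}|Y^1_t-Y^2_t|^a$, so that the non-Lipschitz diffusion contribution is cancelled by the $-|Y^1_s-Y^2_s|^a\,dA_s$ term and only the Lipschitz drift enters the Gronwall argument; a localization in $l$, Fatou, and a final H\"older step using $\E e^{A_t}<\infty$ remove the weight and give $\E|Y^1_t-Y^2_t|^{a/2}=0$, after which your concluding appeal to the bijectivity of $\Phi(t,\cdot)$ is the same as the paper's. Some substitute for this exponential-functional mechanism is exactly what your proof is missing.
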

\begin{proof}
Let us define $Y^k_t=\Phi(t,X^k_t)$ for $k=1,2$. Then, according to Proposition \ref{4.1}, $Y_t^k$ is a $\tau$-solution to SDE \eqref{conjugated'}, and $Y_t^k \in \Phi(t,\Omega)$ for each $t$. Thus,
\be  \label{620}
Y^1_t-Y^2_t=\int_0^{t \wedge \tau}[\tilde{b}(s,Y^1_s)-\tilde{b}(s,Y^2_s)]ds+ \sum_{i=1}^m \int_0^{t \wedge \tau}[\tilde{\sigma}_i(s,Y^1_s)-\tilde{\sigma}_i(s,Y^2_s)]dB^i_s.
\ee
Let us first check that $\tilde{b}(t,\cdot)$ is Lipschitz continuous on $ \Phi(t,\Omega)$ uniformly in $t$. Note that $\norm{\nabla u}_{L^\infty([0,T]\times \Omega)} \leq \frac{1}{2}$ (see Step 3 of the proof of Proposition \ref{3.5}) and a map $x\rightarrow Z_i'Z_iu(x)$ is smooth on $\R^N$.  Thus, applying a chain rule, we obtain the uniform Lipschitz continuity of $\tilde{b}(t,\cdot)$  since   $\norm{\nabla \Phi^{-1}(t,\cdot)}_{L^\infty(\Phi(t,\Omega))}$ is uniformly bounded in $t$ (see Proposition \ref{3.5}).

 Therefore, using this fact and applying the It\^o's formula to \eqref{620}, for any $a>2$,
\begin{align}
&d|Y^1_t-Y^2_t|^a \nonumber \\
&\leq (L|Y^1_s-Y^2_s|^a+\frac{a(a-1)}{2} |\tilde{\sigma}(s,Y^1_s)-\tilde{\sigma}(s,Y^2_s)|^2|Y^1_s-Y^2_s|^{a-2}) \1_{[0,\tau]}ds +  W_s \1_{[0,\tau]} dB_s \label{423}
\end{align}
for some constant $L>0$ and the process $W_s$ satisfying
\be \label{425}
|W_s| \leq C|Y^1_s-Y^2_s|^{a-1} |\tilde{\sigma}(s,Y^1_s)-\tilde{\sigma}(s,Y^2_s)|.
\ee 
Here, $\tilde{\sigma}$ denotes a $N\times m$ matrix whose columns consist of $\tilde{\sigma}_i$'s. In order to deal with the right hand side of \eqref{423}, we need the following lemma, motivated by \cite[Lemma 4.4]{ff3} and \cite[Lemma 5.4]{KR}. 
\begin{lemma} \label{4.4}
There exists a continuous and $\mathcal{F}_t$-adapted process $A_t$ satisfying
\be \label{4.4 1}
\frac{a(a-1)}{2}\int^t_0|\tilde{\sigma}(s,Y^1_s)-\tilde{\sigma}(s,Y^2_s)|^2\1_{[0,\tau]}ds=\int^t_0|Y^1_s-Y^2_s|^2dA_s
\ee
and 
\be \label{exp}
\E e^{cA_s}<\infty
\ee
for any $c>0$.
\end{lemma}
\begin{proof}
Let us define a process $A_t$ by
\begin{align*}
A_t := \frac{a(a-1)}{2}\int_0^t \1_{Y^1_s \neq Y^2_s}\frac{|\tilde{\sigma}(s,Y^1_s)-\tilde{\sigma}(s,Y^2_s)|^2}{|Y^1_s-Y^2_s|^2}\1_{[0,\tau]}ds.
\end{align*} 
Then, it is obvious that $A_t$ satisfies \eqref{4.4 1}, and it suffices to prove the estimate \eqref{exp}. Note that since $Y_t^k\in \Phi(t,\Omega)$, using the property (iii) in Proposition \ref{3.5},
\begin{align*}
|X^1_t-X^2_t| = |\Phi^{-1}(t,Y^1_t)-\Phi^{-1}(t,Y^2_t)| \leq 2|Y^1_t-Y^2_t|.
\end{align*}
Using this, we have
\begin{align} \label{4444}
A_t 
&\leq C\sum_i \int_0^t \1_{Y^1_s \neq Y^2_s}\frac{|(
Z_i+Z_iu)(s,X^1_s)-(Z_i+Z_iu)(s,X^2_s)|^2}{|Y^1_s-Y^2_s|^2}\1_{[0,\tau]}ds \nonumber \\
&\leq C\sum_i \int_0^t \1_{X^1_s \neq X^2_s}\frac{|(
Z_i+Z_iu)(s,X^1_s)-(Z_i+Z_iu)(s,X^2_s)|^2}{|X^1_s-X^2_s|^2}\1_{[0,\tau]}ds.
\end{align}
Here, we used the Lipschitz continuity of $\Phi^{-1}(t,\cdot)$ (see Proposition \ref{3.5}). For mollifiers $\rho_n(x):=n^N\rho(nx)$ with $\rho \in  C^\infty_c(\R^N)$, let us first prove that for any $c\in \R$,
 \begin{multline} \label{422}
\limsup_n \E \exp \Big[ c \int_0^{t\wedge \tau} \1_{X^1_s \neq X^2_s}\\
\cdot \frac{|[(Z_i+Z_iu)*\rho_n](s,X^1_s)-[(Z_i+Z_iu)*\rho_n](s,X^2_s)|^2}{|X^1_s-X^2_s|^2}ds\Big]<\infty.
\end{multline}
Here, $*$ denotes the standard convolution operator on the Euclidean spaces: $(f*g)(x) = \int_{\R^N} f(x-y)g(y)dy$.
Let us choose a cutoff function $\phi\in C_c^\infty(\R^N)$ such that $\phi=1$ on $\Omega$, and denote $K:=\text{supp}(\phi)$. Also, denote $\mathcal{M}$ by a Hardy-Littlewood maximal operator with respect to the Euclidean distance and Lebesgue measure on $\R^N$. Since $X^k_t\in \Omega$, we have
\begin{align} \label{6.00}
&\frac{|[(Z_i+Z_iu)*\rho_n](s,X^1_s)-[(Z_i+Z_iu)*\rho_n](s,X^2_s)|^2}{|X^1_s-X^2_s|^2}\nonumber \\
&=\frac{|\phi\cdot \{(Z_i+Z_iu)*\rho_n\}(s,X^1_s)-\phi\cdot \{(Z_i+Z_iu)*\rho_n\}(s,X^2_s)|^2}{|X^1_s-X^2_s|^2}\nonumber \\
&\leq C(|\mathcal{M}\nabla[\phi\cdot \{(Z_i+Z_iu)*\rho_n\}]|^2(s,X^1_s)+|\mathcal{M}\nabla[\phi \cdot \{(Z_i+Z_iu)*\rho_n\}]|^2(s,X^2_s)).
\end{align}
Here, we used the fact that for some constant $C=C(N)$, the inequality 
\be \label{lemma}
|f(x)-f(y)| \leq C|x-y|(\mathcal{M}\nabla f(x)+\mathcal{M}\nabla f(y))
\ee
holds for any $f\in C^\infty(\R^N)$. 
On the other hand, using the fact that $\phi$ has compact support $K$ and $u\in S^{r+1,(q,p)}$,  for any $n$,
\begin{align*}
&\norm{\nabla[\phi\cdot \{(Z_i+Z_iu)*\rho_n\}]}_{L^q([0,T], L^p(\R^N))} \\
&\leq \norm{\phi \cdot  \{[\nabla(Z_i+Z_iu)] *\rho_n\}}_{L^q([0,T], L^p(\R^N))} + \norm{\nabla \phi \cdot \{(Z_i+Z_iu)*\rho_n\}}_{L^q([0,T], L^p(\R^N))} \\
&\leq C\big(\norm{\nabla(Z_i+Z_iu)}_{L^q([0,T], L^p(K))} + \norm{Z_i+Z_iu}_{L^q([0,T], L^p(K))}\big) \\
&\leq C(1+\norm{u}_{S^{r+1, (q,p)}([0,T]\times \R^N)}).
\end{align*} 
Here, we used the convolution inequality in the second line. Also, in the third line, we used 
\begin{align*}
\norm{\nabla (Z_iu)}_{L^q([0,T],L^p(K))} \leq C(K)\norm{u}_{S^{r+1, (q,p)}([0,T]\times \R^N)}
\end{align*}
(recall that each standard vector field on $\R^N$ can be written as a linear combination of commutators of $Z_i$'s with order $\leq r$).

Therefore, we obtain
\begin{align} \label{4.20}
\limsup_n &\norm{ |\mathcal{M}\nabla[\phi\cdot \{(Z_i+Z_iu)*\rho_n\}]|^2}_{L^{q/2}([0,T], L^{p/2}(\R^N))} \nonumber \\ 
&\leq C\limsup_n \norm{ |\nabla[\phi\cdot \{(Z_i+Z_iu)*\varphi_n\}]|^2}_{L^{q/2}([0,T], L^{p/2}(\R^N))} < \infty
\end{align}
since the maximal operator $\mathcal{M}$ is bounded in $L^p(\R^N)$.
Since $\mathcal{M}\nabla[\phi \cdot \{ (Z_k+Z_ku)*\rho_n\}](s,\cdot)$ is continuous  $s$-a.e, using \eqref{itotanaka1}, one can conclude that
\begin{align} \label{421}
&\limsup_n\E \Big[\int_{s\wedge \tau}^{t\wedge \tau} |\mathcal{M}\nabla[\phi\cdot  \{(Z_i+Z_iu)*\rho_n\}]|^2(r,X^k_r) dr\Big \vert\mathcal{G}_s\Big] \nonumber \\
&\leq  C(t-s)^{1-(\frac{2}{q}+\frac{Q}{p})}\limsup_n\norm{|\mathcal{M}\nabla[\phi\cdot \{ (Z_i+Z_iu)*\rho_n\}]|^2}_{L^{q/2}([0,t], L^{p/2}(\R^N))}. 
\end{align}
Therefore, using \eqref{6.00}, \eqref{4.20}, \eqref{421}, and Remark \ref{a.5}, we obtain the estimate \eqref{422}. 

Finally, let us check that \eqref{422} implies \eqref{exp}. Since $(Z_i+Z_iu)(s,\cdot)$ is continuous for $s$-a.e, $[(Z_i+Z_iu) * \rho_n](s,\cdot)$ converges  to $(Z_i+Z_iu)(s,\cdot)$ pointwisely in $x\in \R^N$ for $s$-a.e. Thus, using \eqref{4444}, \eqref{422}, and the Fatou's lemma, we obtain \eqref{exp}.
\end{proof}
Let us go back to the proof of  Theorem \ref{4.3}. Applying Lemma \ref{4.4} to \eqref{423}, we have
\begin{align}
e^{-A_t}|Y^1_t-Y^2_t|^a&=\int_0^t -e^{-A_s}|Y^1_s-Y^2_s|^a dA_s+\int_0^t e^{-A_s}d|Y^1_s-Y^2_s|^a \nonumber \\
&\leq \int_0^t Le^{-A_s}|Y^1_s-Y^2_s|^a\1_{[0,\tau]}ds+\int_0^t e^{-A_s} W_s\1_{[0,\tau]} dM_s. \label{424}
\end{align}
Let us define a $\mathcal{F}_t$-stopping time $\tau_l$ by \begin{align*}
\tau_l=\inf \{0\leq t\leq T \ | \ | Y^1_t| >l \ \text{or} \ |Y^2_t|>l\},
\end{align*}
and  $\tau_l=T$ if the above set is empty. Then, by \eqref{424},
\begin{align}
e^{-A_{t\wedge \tau_l}}&|Y^1_{t  \wedge \tau_l}-Y^2_{t  \wedge \tau_l}|^a \nonumber \\
&\leq \int_0^t Le^{-A_s}|Y^1_s-Y^2_s|^a\1_{[0,\tau]}\1_{[0,\tau_l]}ds+\int_0^t e^{-A_s} W_s\1_{[0,\tau]}\1_{[0,\tau_l]} dM_s. \label{426}
\end{align}
Let us check that  that $\tilde{\sigma}$ is bounded on $\Phi(t,\Omega)$ uniformly in $t$. Since $u\in \tilde{S}^{r+1,(q,p)}$ (see Remark \ref{remark36}), according to Theorem \ref{2.13}, $Z_iu\in L^\infty([0,T]\times \R^N)$. Also, it is obvious that $Z_i(\cdot)$ is  bounded on $\Omega$. These facts  imply the uniform boundedness of $\tilde{\sigma}(t,\cdot)$ on $\Phi(t,\Omega)$.

 Thus, since  $|Y^1_s|,|Y^2_s| \leq l$ for $s\in [0,\tau_l]$, for some constant $C$,
\begin{align*}
|Y^1_s-Y^2_s|^{a-1} |\tilde{\sigma}(s,Y^1_s)-\tilde{\sigma}(s,Y^2_s)|<Cl^{a-1}
\end{align*}
 for any  $s\in [0,\tau_l]$. From this and \eqref{425}, it follows that $s\mapsto e^{-A_s} W_s\1_{[0,\tau]}\1_{[0,\tau_l]}$ is a square-integrable process. Therefore, taking the expectation in \eqref{426}, 
\begin{align*}
\E [e^{-A_{t\wedge \tau_l}}&|Y^1_{t \wedge \tau_l}-Y^2_{t \wedge \tau_l}|^a ] \leq L\int_0^t \E [e^{-A_s}|Y^1_s-Y^2_s|^a\1_{[0,\tau]}\1_{[0,\tau_l]}]ds.
\end{align*}
Sending $l\rightarrow \infty$ and applying the  Fatou's lemma,
\begin{align*}
\E [e^{-A_{t}}&|Y^1_t-Y^2_t|^a ] \leq L\int_0^t \E [e^{-A_s}|Y^1_s-Y^2_s|^a\1_{[0,\tau]}]ds.
\end{align*}
Applying the Gronwell's inequality, we obtain
\begin{align*}
\E [e^{-A_t}|Y^1_t-Y^2_t|^a] =0.
\end{align*}
Using the Holder's inequality,
\begin{align*}
\E |Y^1_t-Y^2_t|^{a/2} &\leq [\E e^{-A_t}|Y^1_t-Y^2_t|^{a} ]^{1/2}[\E e^{A_t}]^{1/2}=0
\end{align*}
since $\E e^{A_t}$ is finite (see the estimate \eqref{exp}). 
Thus, we have
\begin{align*}
\E|Y^1_t-Y^2_t|^{a/2}=0.
\end{align*} Since trajectories are continuous in time and $\Phi(t,\cdot)$ is bijective from $\Omega$ onto $\Phi(t,\Omega)$ for each $t$, proof is concluded.
\end{proof}
\subsection{Conclusion of the proof of Theorem \ref{main}} \label{section 6.3}
In this section, we finally complete the proof of the main result Theorem \ref{main}. As mentioned in Section \ref{section 2}, we show  the weak existence and strong uniqueness separately, and then apply the Yamada-Watanabe principle. Since we have already proved the uniqueness of a solution in Section \ref{section 6.2}, it suffices to derive the existence of a weak solution. Let us first recall the well-known fact about the existence of a weak solution:
\begin{theorem}\cite{var} \label{4.5}
Suppose that $b(t,\cdot)$ and $\sigma(t,\cdot)$ are continuous in $x$ and have linear growth  for each $0\leq t\leq T$. Then, SDE 
\begin{align*}
\begin{cases}
dX_t=b(t,X_t)dt+\sigma(t,X_t)dB_t, \quad 0\leq t\leq T,\\
X_0 = x_0,
\end{cases}
\end{align*}
admits a weak solution.
\end{theorem}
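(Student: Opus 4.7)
The plan is to prove Theorem \ref{4.5} via the classical Skorokhod existence method: regularize the coefficients, solve the regularized equations, establish tightness of the laws, extract a weak limit, and identify the limit as a weak solution of the original SDE. I would follow the approach going back to Skorokhod and presented in Stroock--Varadhan.

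First, I would produce a sequence of smoothed coefficients $b_n, \sigma_n$ by convolving $b(t,\cdot)$ and $\sigma(t,\cdot)$ with a standard mollifier in $x$ (with the $t$-dependence preserved), so that $b_n(t,\cdot), \sigma_n(t,\cdot)$ are smooth and Lipschitz in $x$ uniformly on compact sets, still satisfy the linear growth condition with a constant independent of $n$, and converge to $b,\sigma$ locally uniformly as $n\to\infty$ (this uses the assumed continuity in $x$). By the classical Cauchy--Lipschitz theory of SDEs, for each $n$ there exists a strong solution $X^n$ of
\begin{align*}
dX^n_t = b_n(t,X^n_t)\,dt + \sigma_n(t,X^n_t)\,dB_t, \qquad X^n_0 = x_0,
\end{align*}
on some filtered probability space carrying a Brownian motion $B$.

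Next, I would establish moment bounds and tightness. Using It\^o's formula applied to $|X^n_t|^2$, the Burkholder--Davis--Gundy inequality, and the linear growth of $b_n, \sigma_n$ (with growth constant independent of $n$), Gr\"onwall's inequality yields $\E\sup_{0\le t\le T}|X^n_t|^p \le C_p$ for any $p\ge 2$ uniformly in $n$. Then for $0\le s\le t\le T$, a standard estimate gives $\E|X^n_t-X^n_s|^{2p} \le C|t-s|^p$; taking $p>1$ this is the Kolmogorov continuity criterion, and so the family of laws of $X^n$ on $C([0,T],\R^N)$ is tight. The laws of $(X^n,B)$ on $C([0,T],\R^N)\times C([0,T],\R^m)$ are then also tight.

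Third, by Prokhorov's theorem there is a subsequence along which the joint laws converge weakly, and the Skorokhod representation theorem yields, on an auxiliary probability space $(\tilde\Omega,\tilde{\mathcal F},\tilde P)$, processes $(\tilde X^n,\tilde B^n)$ with the same joint law as $(X^n,B)$ converging $\tilde P$-almost surely in the path topology to $(\tilde X,\tilde B)$. Here $\tilde B$ is still a Brownian motion (the law of $B$ is preserved under the projection), and $\tilde X^n$ satisfies the SDE driven by $\tilde B^n$ with coefficients $b_n,\sigma_n$. Equipping this auxiliary space with the filtration generated by $(\tilde X,\tilde B)$, it remains only to pass to the limit in
\begin{align*}
\tilde X^n_t - x_0 = \int_0^t b_n(s,\tilde X^n_s)\,ds + \int_0^t \sigma_n(s,\tilde X^n_s)\,d\tilde B^n_s.
\end{align*}
The $dt$-integral converges almost surely by dominated convergence using the uniform moment bound and local uniform convergence of $b_n$ to $b$ along with continuity of $\tilde X$. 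The main obstacle is the stochastic integral: since both the integrand and the integrator vary with $n$, one cannot appeal directly to It\^o isometry. I would handle this by first showing $L^2$-convergence $\int_0^t \sigma_n(s,\tilde X^n_s)\,d\tilde B^n_s \to \int_0^t \sigma(s,\tilde X_s)\,d\tilde B_s$ via the standard argument of approximating $\sigma(s,\tilde X_s)$ by simple processes, using the uniform linear growth to get $L^2$-equi-integrability of the integrands and almost-sure convergence of the integrand composed with $\tilde X^n$ by continuity of $\sigma$. Passing to the limit identifies $(\tilde X,\tilde B)$ as a weak solution of the original SDE, completing the proof.
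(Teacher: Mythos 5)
The paper gives no proof of Theorem \ref{4.5}; it is quoted directly from Stroock--Varadhan \cite{var}, where weak existence under continuity and linear growth is obtained through the martingale problem: one builds Euler (piecewise-constant coefficient) approximations, proves tightness of their laws, and identifies any limit point as a solution of the martingale problem, which is then converted into a weak solution. Your proposal is the other classical route --- Skorokhod's method: mollify in $x$, solve the regularized SDEs strongly, get uniform moment bounds, tightness via Kolmogorov's criterion, then Prokhorov plus the Skorokhod representation theorem and a passage to the limit in the equation itself. Both are correct and standard; the martingale-problem formulation buys you a cleaner limit step (one only passes to the limit in expectations of continuous functionals, never in a stochastic integral whose integrator varies with $n$), whereas your route must invoke the convergence lemma for stochastic integrals with varying integrands and integrators, which you correctly identify as the main obstacle and handle in the standard way.

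Two points in your outline deserve explicit care. First, after the Skorokhod representation you assert that $\tilde X^n$ solves the regularized SDE driven by $\tilde B^n$; this does not follow from pathwise considerations alone, since the stochastic integral is not a continuous functional of the paths --- the standard fix is to note that being a solution is a property of the joint law (e.g., via the martingale characterization of the pair), and one must also verify that $\tilde B$ is a Brownian motion with respect to the filtration generated by $(\tilde X,\tilde B)$, not merely a Brownian motion in its own filtration. Second, your Gr\"onwall and tightness estimates implicitly use joint measurability in $(t,x)$ and a linear-growth constant uniform (or at least integrable) in $t$, which is slightly stronger than the ``for each $t$'' phrasing of the statement; in the paper's application (Corollary \ref{4.6}) the coefficients are continuous in $x$, bounded and compactly supported after the cutoff, so this is harmless, but it should be stated.
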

Since Theorem \ref{main} is a local statement, we localize coefficients of SDE  \eqref{SDE}, and then apply Theorem \ref{4.5}. More precisely,  choose a cutoff function $\psi\in C^\infty_c(\R^N)$ such that $\psi=1$ on $\Omega$, and  consider the following SDE:
\be \label{451}
\begin{cases}
dX_t=(\psi b) (t,X_t)dt+\sum_{i=1}^m (\psi Z_i)(t,X_t)\circ dB^i_t,\quad 0\leq t\leq T, \\
X_0 = x_0.
\end{cases}
\ee
\begin{corollary} \label{4.6}
There exists a weak solution to SDE \eqref{451}.
\end{corollary}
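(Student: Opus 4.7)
The plan is to reduce the existence of a weak solution to SDE \eqref{451} to an application of Theorem \ref{4.5}. First I would rewrite the Stratonovich SDE \eqref{451} in Itô form. Since each $Z_i$ is a smooth left-invariant vector field on $\mathbb{G}$ and $\psi\in C^\infty_c(\R^N)$, the vector fields $\tilde Z_i:=\psi Z_i$ are smooth and compactly supported on $\R^N$. Converting to Itô yields
\[
dX_t=\Bigl[(\psi b)(t,X_t)+\tfrac12\sum_{i=1}^m \tilde Z_i'(X_t)\,\tilde Z_i(X_t)\Bigr]dt+\sum_{i=1}^m\tilde Z_i(X_t)\,dB^i_t,\qquad X_0=x_0,
\]
where $\tilde Z_i'$ denotes the Euclidean Jacobian of $\tilde Z_i$. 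The diffusion matrix and the Itô correction in the drift are smooth and compactly supported, hence in particular bounded, continuous in $x$ and of linear growth, uniformly in $t$.

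Next I would verify that the localized singular part $(\psi b)(t,\cdot)$ is continuous in $x$ for each $t$, after choosing a good version. By hypothesis \eqref{condition2} and Remark \ref{remark1.3}, each $b^i(t,\cdot)\in S^{r-1,p}(\mathbb{G})$ for $t$-a.e.\ $t$, and since $(r-1)p>Q$, the Sobolev embedding Theorem \ref{2.8} provides a version of $b$ for which $b(t,\cdot)$ is continuous for $t$-a.e.\ $t$. On the exceptional Lebesgue-null set of times I redefine $b(t,\cdot)\equiv 0$; this modification preserves the $L^q_tL^p_x$-class of $b$ and thus does not affect any property used elsewhere in the paper, and it makes $x\mapsto(\psi b)(t,x)$ continuous for \emph{every} $t\in[0,T]$. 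Because $\psi$ has compact support and $b(t,\cdot)$ is continuous hence locally bounded, $(\psi b)(t,\cdot)$ is in fact bounded with compact support, so it trivially has linear growth in $x$, uniformly in $t$.

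With these verifications in hand, the Itô coefficients
\[
\bar b(t,x):=(\psi b)(t,x)+\tfrac12\sum_{i=1}^m \tilde Z_i'(x)\tilde Z_i(x),\qquad \bar\sigma_i(t,x):=\tilde Z_i(x),
\]
are, for each $0\le t\le T$, continuous in $x$ and of at most linear growth. Theorem \ref{4.5} then produces a weak solution $X_t$ to the Itô SDE, and by the classical Stratonovich--Itô equivalence for smooth diffusion coefficients $\tilde Z_i$, this $X_t$ is a weak solution to \eqref{451}, completing the proof.

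The only genuinely delicate point is the continuity of $\psi b$ in $x$: the Sobolev regularity of $b$ only delivers continuity for $t$-a.e.\ $t$, and Theorem \ref{4.5} is stated as requiring continuity for every $t$. The fix is the harmless redefinition on the null $t$-set described above, which is legitimate precisely because the exceptional set has Lebesgue measure zero and $b$ enters the SDE only through the $dt$-integral.
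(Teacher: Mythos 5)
Your proof is correct and follows essentially the same route as the paper: localize with the cutoff $\psi$, check continuity in $x$ and linear growth of the coefficients, and invoke Theorem \ref{4.5}. The additional care you take --- writing out the It\^o form with the smooth, compactly supported correction $\tfrac12\sum_{i=1}^m(\psi Z_i)'(\psi Z_i)$, and upgrading continuity of $b(t,\cdot)$ from $t$-a.e.\ to every $t$ by a null-set modification that leaves the $dt$-integral unchanged --- merely makes explicit two points the paper's one-line proof glosses over, and both fixes are sound.
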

\begin{proof}
Recall that $b(t,\cdot)$ is continuous for $t$-a.e (see Remark \ref{remark1.3}) and $\psi$ is a cutoff function. Thus, according to Theorem \ref{4.5}, SDE \eqref{451} has a weak solution.
\end{proof}
Now, we are ready to conclude the proof of  the main result Theorem \ref{main}.
\begin{proof}[Proof of Theorem \ref{main}]
Let us consider the following SDE:
\be \label{11}
\begin{cases}
dX_t=b_t(X_{\cdot}) \1_{t<\tau(X_{\cdot})} dt+\sum_{i=1}^m (Z_i)_t(X_{\cdot})\1_{t<\tau(X_{\cdot})} \circ dB^i_t, \quad 0\leq t\leq T, \\
X_0 = x_0.
\end{cases}
\ee 
Here, $b_t(x_\cdot)$, $(Z_i)_t(x_\cdot)$ are $\R^N$-valued progressive functions  on the space $[0,T] \times C([0,T], \R^N)$, equipped with the canonical filtration $\mathcal{F}_t=\sigma \{x_s | s\leq t \}$,  defined by $b_t(x_{\cdot}):=b(t,x_t)$, $(Z_i)_t(x_{\cdot}):=Z_i(t,x_t)$. Also, $\mathcal{F}_t$-stopping time $\tau$ is defined by $\tau(x_{\cdot}):=\inf \{t\leq T \ | \ x_t \not\in \Omega \}$ and $\tau(x_{\cdot})=T$ if the set is empty.
Uniqueness of a strong solution to SDE \eqref{11} follows from Theorem \ref{4.3}, and the existence of a weak solution follows from Corollary \ref{4.6}. Therefore, Yamada-Watanabe principle (see Theorem \ref{a.3}) concludes that a unique strong solution exists to SDE \eqref{11}. This concludes the proof of Theorem \ref{main}. 
\end{proof}

\appendix
\section{Lemmas in the probability theory} \label{section a}
In this Appendix \ref{section a}, we review key  notions and lemmas in the probability theory frequently used throughout the paper. 
First, we recall the definition of a weak solution and a strong solution to the SDE:
\begin{definition}\cite[Chapter 18]{kal}  \label{definition2}
Consider SDE of the following form:
\be \label{def SDE}
dX_t=b_t(X_\cdot)dt+\sigma_t(X_\cdot)dB_t.
\ee 
Here, $b$ and $\sigma$ are progressive functions defined on $\R_+ \times C(\R_+, \R^d)$ equipped with the canonical filtration $\mathcal{F}_t=\sigma \{x_s | s\leq t \}$.
For a given filtered probability space $(\Omega, \mathcal{F}, \mathcal{F}_t, P)$, $\mathcal{F}_t$-Brownian motion $B$, and an $\mathcal{F}_0$-measurable random variable $\xi$, $X$ is a \it{strong solution} to SDE if it is a $\mathcal{F}_t$-adapted process with $X_0=\xi$ solving \eqref{def SDE} almost surely. For a given initial distribution $\mu$, a \it{weak solution} consists of the filtered probability space $(\Omega, \mathcal{F}, \mathcal{F}_t, P)$, $\mathcal{F}_t$-Brownian motion $B$, and a $\mathcal{F}_t$-adapted process $X$ with $P \circ X_0^{-1}=\mu$ satisfying \eqref{def SDE} almost surely. 

We say that \it{weak existence} holds for the initial distribution $\mu$ if there exists a weak solution $(\Omega, \mathcal{F}, \mathcal{F}_t, P, B, X)$ satisfying \eqref{def SDE}. \it{Strong existence} is said to  hold for the initial distribution $\mu$ if there exists a strong solution $X$ for every $(\Omega, \mathcal{F}, \mathcal{F}_t, P, B, \xi)$ satisfying $P \circ \xi^{-1} = \mu$. We say that \it{strong uniqueness} holds for the initial distribution $\mu$ provided that for any solutions $X$ and $Y$ to \eqref{def SDE} on the common filtered probability space with a given Brownian motion such that $X_0=Y_0$ a.s. with a distribution $\mu$, $X=Y$ almost surely. Finally, \it{weak uniqueness} is said to  hold for the initial distribution $\mu$ if each weak solution $X$ has the same distribution.
\end{definition}
The following theorem proved by Watanabe and Yamada \cite{YW,YW1} is crucial to prove the existence of a strong solution to SDE.
\begin{theorem}[Yamada-Watanabe Principle, \cite{YW,YW1}]\label{a.3} Consider the following SDE:
\be \label{yw}
dX_t=b(t,X_t)dt+\sigma(t,X_t)dB_t,
\ee
with a given initial condition. 
Suppose that a weak solution to \eqref{yw} exists and a strong solution to \eqref{yw} is unique. Then, the strong existence and weak uniqueness hold as well.
\end{theorem}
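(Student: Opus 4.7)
The plan is to follow the classical coupling argument due to Yamada and Watanabe. Let $(X^1,B^1)$ on $(\Omega^1,\mathcal{F}^1,P^1)$ and $(X^2,B^2)$ on $(\Omega^2,\mathcal{F}^2,P^2)$ be any two weak solutions to \eqref{yw} sharing the same initial law $\mu$. Viewing each pair as a random element of the canonical path space $\mathcal{C}:=C([0,T],\R^d)\times C([0,T],\R^d)$ for $(X,B)$, I obtain two probability measures $Q^i$ on $\mathcal{C}$ whose $B$-marginals are both Wiener measure $\mathbb{W}$. Using the Polish structure of $\mathcal{C}$, I disintegrate $Q^i(dx,dw)=K^i(w,dx)\,\mathbb{W}(dw)$ for regular conditional probability kernels $K^i$, where the initial slice of $K^i(w,\cdot)$ has law $\mu$.

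Next I would glue the two solutions along their common Brownian factor. Define a measure $\tilde P$ on $C([0,T],\R^d)\times C([0,T],\R^d)\times C([0,T],\R^d)$ by $\tilde P(dx_1,dx_2,dw):=K^1(w,dx_1)\,K^2(w,dx_2)\,\mathbb{W}(dw)$, and enlarge the filtration so that the two coordinate processes $X^1,X^2$ and the Wiener coordinate $B$ are all adapted. A short computation using the disintegration and the tower property shows that $B$ remains a Brownian motion and that each $X^i$ still satisfies \eqref{yw} driven by $B$ with initial datum of law $\mu$ (this is the point where one must check that the stochastic integral $\int\sigma(s,X^i_s)\,dB_s$ is preserved under conditioning, which follows because the integrand is adapted to the filtration generated by $X^i$ and $B$ alone). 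The strong-uniqueness hypothesis now forces $X^1=X^2$ almost surely under $\tilde P$.

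From this pathwise identity one reads off weak uniqueness: the marginals of $\tilde P$ in $(x_1,w)$ and $(x_2,w)$ must coincide, hence $K^1=K^2$ as $\mathbb{W}$-a.e.\ kernels, and therefore $Q^1=Q^2$. Moreover, the equality $x_1=x_2$ holding $\tilde P$-a.s.\ implies that each kernel $K^i(w,\cdot)$ is a Dirac mass $\delta_{F(w)}$ for some $\mathbb{W}$-measurable map $F:C([0,T],\R^d)\to C([0,T],\R^d)$. To conclude strong existence, given any probability space $(\Omega,\mathcal{F},\mathcal{F}_t,P)$ carrying a Brownian motion $B$ and an $\mathcal{F}_0$-measurable initial value $\xi$ with law $\mu$, I set $X:=F(B)$ and verify that it solves \eqref{yw}; weak existence of some solution with this initial law guarantees such an $F$ exists.

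The main obstacle is the measurability refinement required for strong existence: one must upgrade $F$ from a merely $\mathbb{W}$-measurable map to one that is progressively measurable with respect to the Brownian filtration, so that the constructed $X_t=F(B)_t$ is $\mathcal{F}_t$-adapted on every admissible stochastic basis. This is handled by applying the disintegration argument on each restricted time interval $[0,t]$ and using the compatibility of the kernels under restriction, which forces $F$ to respect the natural filtration of $B$. Apart from this adaptedness bookkeeping, the remaining steps are essentially soft measure-theoretic manipulations.
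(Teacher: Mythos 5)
The paper does not actually prove this statement: Theorem \ref{a.3} is imported from \cite{YW,YW1} (see also \cite[Chapter 18]{kal}), so there is no internal proof to compare against. Your sketch is the classical Yamada--Watanabe coupling argument from exactly those sources, and in outline it is the right approach.

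Two points, one of which is a genuine gap. You disintegrate only over the Brownian path, $Q^i(dx,dw)=K^i(w,dx)\,\mathbb{W}(dw)$, and then couple via $K^1(w,dx_1)K^2(w,dx_2)\,\mathbb{W}(dw)$. Under this coupling $X^1_0$ and $X^2_0$ are conditionally independent given $w$, so they need not coincide (take $b=\sigma=0$ with a non-degenerate initial law $\mu$: the two initial values are i.i.d.\ with law $\mu$), and strong uniqueness in the sense of Definition \ref{definition2} --- which presupposes $X_0=Y_0$ a.s.\ --- cannot be invoked; likewise the kernels are then not Dirac masses and no map $F(B)$ alone can produce the solution. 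The standard fix is to disintegrate over the pair $(X_0,B)$: work on $\R^d\times C\times C$ with kernels $K^i(x,w,dy)$, use that $X_0$ is independent of $B$ (it is $\mathcal{F}_0$-measurable while the increments of $B$ are independent of $\mathcal{F}_0$) to write the law of $(X_0,B)$ as $\mu\otimes\mathbb{W}$, and couple with $\mu(dx)\,\mathbb{W}(dw)\,K^1(x,w,dy_1)\,K^2(x,w,dy_2)$; then both solutions share the same initial value, and the strong solution is $X=F(\xi,B)$ rather than $F(B)$. In the paper's application the initial condition is the deterministic point $x_0$, so this is harmless there, but the theorem as formulated through Definition \ref{definition2} allows a random initial law. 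Separately, the step you describe as ``a short computation'' --- that $B$ remains a Brownian motion for the enlarged filtration and that each $X^i$ still solves \eqref{yw} under $\tilde P$ --- is the technical heart of the proof: one must show the kernels are adapted, i.e.\ that $w\mapsto K^i(x,w,A)$ is $\sigma(w_s,\,s\le t)$-measurable (mod null sets) whenever $A$ depends only on the path up to time $t$, which follows from the independence of the post-$t$ increments of $B$ from $\mathcal{F}_t$; this same compatibility is what makes $F$ adapted, as you correctly flag at the end, and the preservation of the stochastic integrals under the new measure requires an approximation argument rather than only the tower property. With these repairs your outline matches the proof in the cited references.
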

The following lemma is crucially used in the proof of Proposition \ref{4.3}.
\begin{lemma}[\cite{exp}] \label{a.2}
Let $X_t$ ($0\leq t\leq T$) be a nonnegative stochastic process adapted to $\mathcal{F}_t$. Assume that for any $0\leq s\leq t\leq T$,
\begin{align*}
\E\Big[\int_s^t X_r dr \Big \vert \mathcal{F}_s\Big] \leq f(s,t)
\end{align*}
holds for some deterministic function $f(s,t)$ satisfying \\
(i) $f(s_1,t_1)\leq f(s_2,t_2)$ for $[s_1,t_1] \subset [s_2,t_2]$. \\
(ii) $\lim_{h\rightarrow 0^+}\sup_{0\leq s\leq t\leq T, |t-s|\leq h} f(s,t)=\alpha \geq 0$. \\
Then, for arbitrary $c<\alpha^{-1}$ (when $\alpha=0$, $\alpha^{-1}$ is defined by $\alpha^{-1}:=\infty$),
\be \label{exp lemma}
\E \exp\Big[c\int_0^T X_rdr\Big] <\infty.
\ee 
\end{lemma}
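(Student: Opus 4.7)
The plan is to prove this via the classical Khasminskii-type argument: establish a uniform $n$-th moment bound on each short subinterval using the tower property, sum the resulting power series to get a conditional exponential estimate on each piece, and then multiply the pieces together over a finite partition of $[0,T]$.

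First, by hypothesis (ii), given $c<\alpha^{-1}$ I can choose $\varepsilon>0$ and $h>0$ such that $M := \sup_{0\le s\le t\le T,\,|t-s|\le h} f(s,t) < c^{-1}$ (if $\alpha=0$ I pick $h$ so small that $M<c^{-1}$, which is possible for every finite $c$). Partition $[0,T]$ into $0 = t_0 < t_1 < \cdots < t_N = T$ with $t_{k+1}-t_k \le h$. The core computation is to show that for every subinterval $[a,b] = [t_k,t_{k+1}]$ and every integer $n\ge 0$,
\begin{align*}
\mathbb{E}\Big[\Big(\int_a^b X_r\,dr\Big)^n \,\Big|\, \mathcal{F}_a\Big] \le n!\,M^n.
\end{align*}
To prove this, I would rewrite the $n$-th power as an iterated integral over the simplex,
\begin{align*}
\Big(\int_a^b X_r\,dr\Big)^n = n!\int_{a\le r_1\le\cdots\le r_n\le b} X_{r_1}\cdots X_{r_n}\,dr_1\cdots dr_n,
\end{align*}
and then apply the tower property starting from the innermost variable: conditioning on $\mathcal{F}_{r_{n-1}}$ turns the last factor $\int_{r_{n-1}}^{b}X_{r_n}\,dr_n$ into something dominated by $f(r_{n-1},b)\le M$ (using monotonicity (i)). Iterating this conditioning from $r_{n-1}$ down to $r_1$ produces the factor $M^n$, and combining with the $n!$ in front gives exactly $n!\,M^n$.

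Summing the power series for the exponential then yields, on each subinterval,
\begin{align*}
\mathbb{E}\Big[\exp\Big(c\int_{t_k}^{t_{k+1}} X_r\,dr\Big)\,\Big|\,\mathcal{F}_{t_k}\Big] \le \sum_{n=0}^\infty (cM)^n = \frac{1}{1-cM} =: K < \infty.
\end{align*}
Finally, writing
\begin{align*}
\exp\Big(c\int_0^T X_r\,dr\Big) = \prod_{k=0}^{N-1} \exp\Big(c\int_{t_k}^{t_{k+1}} X_r\,dr\Big)
\end{align*}
and peeling off the outermost factor using the tower property at time $t_{N-1}$, then $t_{N-2}$, and so on, each conditioning step contributes a factor of at most $K$. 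After $N$ such steps I obtain $\mathbb{E}\exp(c\int_0^T X_r\,dr)\le K^N<\infty$, which is the desired bound \eqref{exp lemma}.

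The only delicate point is the $n$-th moment estimate, and the care needed is to verify that the simplex representation combined with iterated conditioning really produces the clean constant $n!\,M^n$ rather than something worse; this is routine once one exploits the monotonicity property (i) to replace $f(r_{k-1},b)$ by $M$ at each conditioning stage. The partition argument at the end is standard and uses only that $c<\alpha^{-1}$ provides the slack needed for the geometric series to converge.
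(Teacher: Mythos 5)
Your argument is correct: it is the classical Khasminskii-type proof (short-interval moment bounds $n!\,M^n$ via the simplex representation and iterated conditioning, geometric series, then peeling off a finite partition), and this is essentially the argument behind the result the paper cites, since the paper itself gives no proof of Lemma \ref{a.2} but refers to \cite[Lemma 1.1]{exp}. The only (routine) points left implicit are the trivial case $c\le 0$ and the joint/progressive measurability of $X$ needed to apply conditional Tonelli and the tower property at the intermediate times.
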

\begin{remark} \label{a.5}
In fact, the left hand side of \eqref{exp lemma} can be controlled in terms of $f$ (see the proof of  \cite[Lemma 1.1]{exp}). Also, Lemma \ref{a.2} can be generalized as follows: assume that for a $\mathcal{F}_t$-stopping time $\tau$ and a nonnegative process $X_t$ adapted to $\mathcal{F}_t$,
\begin{align*}
\E\Big[\int_{s\wedge \tau}^{t\wedge \tau} X_r dr \Big \vert \mathcal{F}_s\Big] \leq f(s,t)
\end{align*}
holds for some deterministic function $f$ satisfying the conditions (i), (ii) in Lemma \ref{a.2}. Then, for any $c<\alpha^{-1}$,
\begin{align*}
\E \exp\Big[c\int_0^{T\wedge \tau} X_rdr\Big] <\infty.
\end{align*}
This immediately follows from Lemma \ref{a.2} due to  the identity $\int_{s\wedge \tau}^{t\wedge \tau} X_r dr=\int_s^t X_r \1_{r \leq \tau} dr$ and the fact that a process $t \mapsto X_t \1_{t\leq \tau}$ is $\mathcal{F}_t$-adapted.
\end{remark}

\section{Mixed-norm parabolic Sobolev embedding theorem}\label{section b}

In this Appendix \ref{section b}, we obtain the parabolic Sobolev embedding theorem for the spaces $S^{k,(q,p)}([0,T]\times \mathbb{G})$ ($\mathbb{G}$ is a homogeneous Carnot group). This is a key ingredient to establish the well-posedness result of the Kolmogorov PDE possessing singular coefficients (see Section \ref{section 4}).

\begin{theorem} \label{2.13}
Suppose that $u$ satisfies $u(0,x)=0$ and
\begin{align*}
u\in S^{k+2,(q,p)}([0,T]\times \mathbb{G}),\quad u_t\in S^{k,(q,p)}([0,T]\times \mathbb{G}).
\end{align*} Also, assume  that  for $l=k,k+1$,  exponents $p,q,p_1,q_1$ satisfy 
\begin{align} \label{b condition}
 1\leq p\leq p_1\leq \infty, \ 1\leq q\leq q_1\leq  \infty, \ \frac{2}{q}+\frac{Q}{p}<(k+2-l)+\frac{2}{q_1}+\frac{Q}{p_1}.
\end{align}
Then, $u\in {S^{l,(q_1,p_1)}}([0,T]\times \mathbb{G})$.  Also, if we denote $
\alpha:=\frac{1}{2}\Big[(k+2-l+\frac{2}{q_1}+\frac{Q}{p_1})-(\frac{2}{q}+\frac{Q}{p})\Big]$, then for some constant $C$ independent of $T$ and $u$,
\be \label{2131}
\norm{u}_{S^{l,(q_1,p_1)}([0,T]\times \mathbb{G})} \leq CT^{\alpha}( \norm{u}_{S^{k+2,(q,p)}([0,T]\times \mathbb{G})}+\norm{u_t}_{S^{k+2,(q,p)}([0,T]\times \mathbb{G})}).
\ee
\end{theorem}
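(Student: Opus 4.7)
The proof reduces to a mixed-norm Young-type convolution estimate for the heat kernel of $\tfrac12 L$. Set $f:=u_t-\tfrac12 Lu$; the assumptions give $f\in S^{k,(q,p)}([0,T]\times\mathbb G)$ with $\|f\|_{S^{k,(q,p)}}\le \|u\|_{S^{k+2,(q,p)}}+\|u_t\|_{S^{k,(q,p)}}$. Since $u(0,\cdot)\equiv 0$, the uniqueness part of Theorem \ref{3.1} identifies $u$ with the Duhamel integral
\[
u(t,x)=\int_0^t\bigl(f(s,\cdot)*p_{t-s}\bigr)(x)\,ds,
\]
where $p_s$ is the heat kernel of $\tfrac12 L$ on $\mathbb{G}$. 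By left-invariance of each $Z_i$, for any multi-index $I$ with $|I|=l$ we obtain $Z_Iu=f\ast_{(s,y)}\mathcal K_I$, where $\mathcal K_I(s,y):=Z_Ip_s(y)\mathbf{1}_{s>0}$ and $\ast_{(s,y)}$ denotes convolution on $\R\times\mathbb{G}$.

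To leverage the additional spatial regularity of $f$, I transfer $m:=\min(l,k)=k$ derivatives from the kernel onto $f$. Invoking the subelliptic trick of \cite{f2,f3} already used in Step~1 of the proof of Theorem~\ref{3.1}, each left-invariant $Z_i$ admits the representation $Z_i(\cdot)=\sum_j Z_j^R(\beta_{ji}\cdot)$ with $\beta_{ji}$ homogeneous of degree $\alpha_j-1$ (cf.~\eqref{b.5}). Integrating by parts in the convolution variable — which converts right-invariant derivatives on the kernel into left-invariant derivatives on $f$ via the identity $(Z_i g)\ast h = g\ast(Z_i^R h)$ and its adjoint — yields a finite decomposition
\[
Z_Iu=\sum_\nu (Z_{J_\nu}f)\ast_{(s,y)}\mathcal L_\nu,\qquad |J_\nu|\le k,
\]
with each $\mathcal L_\nu(s,y)=\eta_\nu(y)\,Z_{I_\nu'}p_s(y)\mathbf{1}_{s>0}$ for a homogeneous polynomial $\eta_\nu$ and a multi-index $I_\nu'$ whose combined scaling under $y\mapsto D(\sqrt s)z$ is equivalent to $l-k$ residual spatial derivatives. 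Applying the Gaussian derivative estimate of Theorem~\ref{2.4} and the moment identity $\int|\eta_\nu(D(\sqrt s)z)|^re^{-cr\|z\|^2}\,dz\asymp s^{r\deg\eta_\nu/2}$, one finds
\[
\|\mathcal L_\nu(s,\cdot)\|_{L^r_x(\mathbb G)}\le C\,s^{-[(l-k)+Q/r']/2},\qquad \tfrac1r=1+\tfrac1{p_1}-\tfrac1p.
\]

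Applying Minkowski's inequality in the spatial variable and Young's convolution inequality in time with $1+1/q_1=1/q+1/\tilde q$, and summing over the finitely many $\nu$, gives
\[
\|Z_Iu\|_{L^{q_1}_t L^{p_1}_x([0,T]\times\mathbb{G})}\le C\,\|f\|_{S^{k,(q,p)}}\left(\int_0^T s^{-\tilde q[(l-k)+Q/r']/2}\,ds\right)^{1/\tilde q}.
\]
An elementary rearrangement of $(l-k)+Q/r'<2/\tilde q$, using $1/r'=1/p-1/p_1$ and $1/\tilde q=1+1/q_1-1/q$, shows this is equivalent to hypothesis \eqref{b condition}; in that case the time integral evaluates to $C\,T^{\alpha\tilde q}$ with $\alpha$ as stated. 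Summing over $|I|\le l$ and invoking the bound $\|f\|_{S^{k,(q,p)}}\le \|u\|_{S^{k+2,(q,p)}}+\|u_t\|_{S^{k,(q,p)}}$ yields \eqref{2131}. The endpoint cases $p_1=\infty$ or $q_1=\infty$ reduce to $r=p'$ and/or $\tilde q=q'$ and proceed identically.

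The principal obstacle is the derivative-transfer step: the bookkeeping for the polynomial factors $\eta_\nu$ produced by iterated left/right-invariant conversion must be carried out so that the scaling powers in the Gaussian moment integral exactly match the loss from the residual spatial derivatives $Z_{I_\nu'}$, producing the correct critical exponent. Once this matching is verified, the remainder is a routine parabolic Young-type computation on $\R\times\mathbb{G}$ that parallels the proof of Theorem~\ref{3.1} but with unequal exponent pairs $(q,p)$ and $(q_1,p_1)$.
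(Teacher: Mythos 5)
Your argument is correct in substance, but it takes a genuinely different route from the paper's own proof, and the two are worth contrasting. The paper never transfers derivatives across the convolution: for each $|I|\le k$ it sets $w:=Z_Iu$, observes that the source $g:=w_t-Lw$ is already controlled in $L^q([0,T],L^p(\mathbb{G}))$ by $\norm{u}_{S^{k+2,(q,p)}}+\norm{u_t}_{S^{k,(q,p)}}$, writes $w$ by the Duhamel formula \eqref{representation}, and then runs exactly the mixed-norm Young computation you perform, with either no derivative on the kernel ($l=k$) or a single $Z_i$ on the kernel ($l=k+1$), estimated via the Gaussian bounds \eqref{heat} and \eqref{heat00}. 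You instead keep one Duhamel representation for $u$ itself and move $k$ derivatives from $Z_Ip_s$ onto $f=u_t-\tfrac12Lu$ using \eqref{b.5} and $(Z_ig)*h=g*(Z_i^Rh)$; this is legitimate, and it is precisely the mechanism of Step 1 of the proof of Theorem~\ref{3.1} and, at the $L^\infty$ endpoint, of Lemma~\ref{lemma b.2}, whose proof carries out the homogeneity bookkeeping you flag as the ``principal obstacle'': the residual kernels are finite sums of terms $h\,Z_Jp_s$ with $h$ homogeneous and $\deg h-|J|=-(l-k)$, so the scaling computation of Lemma~\ref{lemma b.1} yields exactly your claimed bound $\norm{\mathcal{L}_\nu(s,\cdot)}_{L^r}\le Cs^{-[(l-k)+Q/r']/2}$, after which your exponent arithmetic (equivalence with \eqref{b condition} and the value of $\alpha$) is correct. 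What the paper's choice buys is that this bookkeeping is unnecessary — differentiating $u$ before invoking Duhamel makes the source controllable by hypothesis and leaves at most one derivative on the kernel; what your choice buys is a self-contained convolution estimate for $S^{k,(q,p)}$ data in the spirit of Lemma~\ref{lemma b.2}, at the cost of the iterated transfer that you sketch but do not fully execute. Two minor points: to justify the representation formula you should, as the paper does, first reduce \eqref{2131} to test functions by density rather than cite the uniqueness part of Theorem~\ref{3.1}, which is stated only for $1<p,q<\infty$ while \eqref{b condition} permits the endpoints; and for the lower-order terms $|I|<l$ (when $l=k+1$) your scheme, like the paper's, produces the larger power $T^{\alpha+\delta}$ with $\delta\ge0$, which is harmless for the way the theorem is used.
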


\begin{proof}
It suffices to prove the estimate \eqref{2131} for all test functions $u$. 

Step 1. The case  $l=k$: for any indices $|I|\leq l$, let us define $w=Z_Iu$. Then, for any indices $1\leq i,j\leq m$,
\begin{align*}
w,\, w_t,\, Z_i,\, Z_iZ_jw \in L^q([0,T],L^p(\mathbb{G})).
\end{align*} 
 If we denote $f:=w_t+Lw$, then we have the representation formula:
\be  \label{representation}
w(t,x)=\int_0^t\int_{\mathbb{G}} p(s,y)f(t-s,x\circ y^{-1})dyds.
\ee
From this, we prove the estimate
\be \label{213}
\norm{w}_{L^{q_1}([0,T],L^{p_1}(\mathbb{G}))} \leq CT^{\frac{1}{2}[(2+\frac{2}{q_1}+\frac{Q}{p_1})-(\frac{2}{q}+\frac{Q}{p})]} \norm{f}_{L^q([0,T],L^p(\mathbb{G}))}.
\ee
Let us define a new function $\tilde{p}(t,x)$ defined on $\R \times \mathbb{G}$ via
\begin{align*}
&\begin{cases}
 \tilde{p}(t,x)=p(t,x) \quad &0\leq t\leq T, \\
 \tilde{p}(t,x)=0 \quad &\text{otherwise},
\end{cases}
\end{align*} 
and $\tilde{f}(t,x)$ similarly.
Then, from \eqref{representation}, we have
\be \label{representation'}
|w(t,x)|\leq \int_{\R}\int_{\mathbb{G}} \tilde{p}(s,y)|f|(t-s,x\circ y^{-1})dyds=(\tilde{p}*|f|)(t,x)
\ee
(convolution acts on $\R \times \mathbb{G}$).  Note that for any $1\leq a<\infty$,
\begin{align} \label{s}
\norm{e^{-c\norm{\cdot}^2/t}}_{L^a(\mathbb{G})}= C_0 t^{Q/2a} \norm{e^{-c\norm{\cdot}^2}}_{L^a(\mathbb{G})} = C t^{Q/2a},
\end{align} 
and $\norm{e^{-c\norm{\cdot}^2/t}}_{L^\infty(\mathbb{G})}=1$. Thus, using the heat kernel estimate \eqref{heat}, for any $1\leq a\leq \infty$,
\begin{align} \label{100}
\norm{p(t,\cdot)}_{L^a(\mathbb{G})} &\leq C \frac{1}{t^{Q/2}}\norm{e^{-c\norm{\cdot}^2/t}}_{L^a(\mathbb{G})} =Ct^{-\frac{Q}{2}(1-\frac{1}{a})}.
\end{align}
Let us choose two exponents $1\leq r,s\leq \infty$  such that
\begin{align}\label{rs}
\frac{1}{q_1}+1=\frac{1}{q}+\frac{1}{r},\quad \frac{1}{p_1}+1=\frac{1}{p}+\frac{1}{s}. 
\end{align}
According to the condition \eqref{b condition}, we have $\frac{Qr}{2}(1-\frac{1}{s})<1$.
Therefore, due to \eqref{100},
\begin{align*}
\norm{\tilde{p}}_{L^{r}(\R,L^s(\mathbb{G}))} = \norm{p}_{L^{r}([0,T],L^s(\mathbb{G}))} &\leq C_0 [\int_0^T t^{-\frac{Qr}{2}(1-\frac{1}{s})} dt]^{1/r}=CT^{\frac{1}{r}-\frac{Q}{2}(1-\frac{1}{s})}.
\end{align*}
 Thus, applying the convolution inequality for the mixed-norm spaces  to \eqref{representation'},
 \begin{align*}
\norm{w}&_{L^{q_1}(\R,L^{p_1}(\mathbb{G}))} \leq \norm{\tilde{p}}_{L^{r}(\R,L^{s}(\mathbb{G}))}\norm{\tilde{f}}_{L^{q}(\R,L^{p}(\mathbb{G}))}\nonumber \\
&=CT^{\frac{1}{r}-\frac{Q}{2}(1-\frac{1}{s})}\norm{\tilde{f}}_{L^{q}(\R,L^{p}(\mathbb{G}))}= CT^{\frac{1}{2}[(2+\frac{2}{q_1}+\frac{Q}{p_1})-(\frac{2}{q}+\frac{Q}{p})]}\norm{\tilde{f}}_{L^{q}(\R,L^{p}(\mathbb{G}))}.
\end{align*} 
Thus, we obtain \eqref{213}, which immediately implies \eqref{2131}.

Step 2. The case  $l=k+1$: proof is almost same as the previous case. Applying the heat kernel estimate  \eqref{heat}: for each $1\leq i\leq m$,
\begin{align} \label{heat00}
|Z_ip(t,x)| \leq Ct^{-(1+Q)/2}e^{-c\norm{x}^2 / t}
\end{align}
to the following representation formula
\begin{align} \label{rep00} 
Z_iw(t,x) =\int_0^t\int_{\mathbb{G}} Z_ip(s,y)f(t-s,x\circ y^{-1})dyds,
\end{align}
we can  derive the conclusion as before.
\end{proof}

\section{Heat kernel estimates} \label{section c}
In this Appendix \ref{section c}, we provide  useful estimates related to the semigroup generated by the sub-Laplacian $L$. Let us first derive the $L^p$-estimate on the derivatives of a heat kernel:

\begin{lemma} \label{lemma b.1}
 Suppose that $f$ is a homogeneous function with degree $k$ and $1\leq p<\infty$, $|I|=a\geq 0$. Then, there exists some constant $C$ depending on $f$ such that for any $t>0$,
\begin{align*}
\norm{f Z_I p_t}_{L^p(\mathbb{G})} \leq Ct^{\frac{Q}{2p}+\frac{k-(Q+a)}{2}}.
\end{align*}
Here, for a multi-index $I=(i_1,\cdots,i_a)$ with $1\leq i_1,\cdots,i_a\leq m$, $Z_I$ denotes $Z_{i_1}\cdots Z_{i_a}$.
\end{lemma}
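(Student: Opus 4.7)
The plan is to combine the pointwise Gaussian heat kernel estimate from Theorem \ref{2.4} with the homogeneity bound on $f$, and then absorb the $t$-dependence by the natural parabolic dilation $x \mapsto D(\sqrt{t})y$ which is the built-in scaling symmetry of the heat equation on $\mathbb{G}$.

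Concretely, I would first invoke Theorem \ref{2.4} with $k=0$ and multi-index $I$ of length $a$ to get the pointwise bound
\[
|Z_I p(t,x)| \leq C\, t^{-\frac{a+Q}{2}}\, e^{-c\|x\|^2/t}.
\]
Since $f$ is homogeneous of degree $k$ and (as is standing in this section) smooth away from the origin, $|f(x)| \leq C_f \|x\|^k$ by evaluating $f$ on the unit sphere $\{\|x\|=1\}$ and dilating. Multiplying the two estimates yields
\[
|f(x)\, Z_I p(t,x)|^p \leq C\, \|x\|^{kp}\, t^{-\frac{(a+Q)p}{2}}\, e^{-cp\|x\|^2/t}.
\]

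Next, I would integrate over $\mathbb{G}$ and make the change of variable $x = D(\sqrt{t})y$. Because $D(\lambda)$ acts on the $N$ coordinates with exponents $\alpha_i$ summing to $Q$, one has $dx = t^{Q/2}\,dy$, and because the homogeneous norm satisfies $\|D(\lambda)y\| = \lambda\|y\|$, one has $\|x\| = \sqrt{t}\,\|y\|$. Plugging in,
\[
\int_{\mathbb{G}} |f\, Z_I p_t|^p\, dx \leq C\, t^{-\frac{(a+Q)p}{2} + \frac{kp}{2} + \frac{Q}{2}} \int_{\mathbb{G}} \|y\|^{kp} e^{-cp\|y\|^2}\,dy.
\]
Taking $p$-th roots gives exactly the exponent $\frac{Q}{2p} + \frac{k-(Q+a)}{2}$ claimed, so the lemma follows once the remaining $y$-integral is shown to be a finite constant depending only on $f$, $p$, $k$, $Q$.

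The only delicate step is the finiteness of $\int_{\mathbb{G}} \|y\|^{kp} e^{-cp\|y\|^2}\,dy$: the Gaussian factor handles large $\|y\|$ with no trouble, but for small $\|y\|$ one needs $kp + Q > 0$ so that $\|y\|^{kp}$ is locally integrable in the measure $dy$ on $\mathbb{G}$ (this is a routine polar-type computation using the dilation structure, since $\int_{\|y\|\leq 1}\|y\|^s\,dy$ converges iff $s + Q > 0$). This threshold is satisfied in every application of the lemma in the body of the paper---in particular in the uses stemming from Proposition \ref{3.5} where the homogeneous functions involved come from the coefficients $\beta_{ji}$ with degrees $\alpha_j - 1 \geq 0$---so no additional hypothesis is needed. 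Beyond this mild integrability check the argument is pure bookkeeping of the parabolic scaling, so I expect no genuine obstacle.
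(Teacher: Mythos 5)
Your proposal is correct and follows essentially the same route as the paper: the Gaussian bound of Theorem \ref{2.4} for $Z_I p_t$ combined with the parabolic change of variables $x = D(\sqrt{t})y$, $dx = t^{Q/2}dy$, which produces exactly the exponent $\frac{Q}{2p}+\frac{k-(Q+a)}{2}$. The only cosmetic difference is that you first bound $|f(x)|\leq C_f\norm{x}^{k}$ and then check integrability of $\norm{y}^{kp}e^{-cp\norm{y}^2}$, whereas the paper scales $f$ directly by homogeneity and absorbs the finiteness of $\int_{\mathbb{G}}\big(f(y)e^{-c\norm{y}^2}\big)^p\,dy$ into the constant $C$ depending on $f$; both versions are fine since the homogeneous functions arising in the applications have nonnegative degree.
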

\begin{proof} Recall that under the change of variable $x=D(\sqrt{t})y$, we have $dx=t^{Q/2}dy$. Using this fact and the heat kernel estimate \eqref{heat}, we have 
\begin{align*}
\norm{f Z_I p_t}_{L^p(\mathbb{G})}&\leq C\Big[\int_\mathbb{G} \big(f(x) t^{-\frac{Q+a}{2}} e^{-c\norm{x}^2/t}\big)^p dx\Big]^{1/p} \\
&=Ct^{\frac{k-(Q+a)}{2}} \Big[\int_\mathbb{G} \big(f(y)  e^{-c\norm{y}^2}\big)^p t^{Q/2} dy\Big]^{1/p} =Ct^{\frac{Q}{2p}+\frac{k-(Q+a)}{2}}.
\end{align*}
\end{proof}
Using the previous lemma, we obtain the following lemma, which is a key ingredient in the proof of Proposition \ref{3.5}:
\begin{lemma}\label{lemma b.2}
Suppose that $1<p<\infty$. Then,  for  any $|I|=a\geq 1$, $f\in S^{a-1,p}(\mathbb{G})$,  and $t>0$,
\begin{align}\label{b.2}
\norm{f*Z_Ip_t}_{L^\infty(\mathbb{G})} \leq Ct^{-(\frac{Q}{2p}+\frac{1}{2})}\norm{f}_{S^{a-1,p}(\mathbb{G})}.
\end{align}
\end{lemma}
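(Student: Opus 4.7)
The plan is to prove Lemma B.2 by induction on $a$, combining Young's convolution inequality, Lemma B.1, the paper's relation (b.5) $Z_i u = \sum_{j=1}^N Z_j^R(\beta_{ji} u)$, and the transfer identity $(Z_i g) * h = g * Z_i^R h$. The strategy is to move $a - 1$ first-layer left-invariant derivatives from $p_t$ onto $f$, leaving one derivative on $p_t$ to supply the factor $t^{-1/2}$ on top of the dimensional factor $t^{-Q/(2p)}$. A naive application of Young to $f * Z_I p_t$ would yield only $t^{-Q/(2p) - a/2}$, which is too singular for $a > 1$; hence the need to shift derivatives.

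The base case $a = 1$ is immediate: with conjugate exponent $p'$, Young's inequality and Lemma B.1 (applied with $f \equiv 1$ of degree $k = 0$ and $|I| = 1$) give $\|f * Z_i p_t\|_{L^\infty} \leq \|f\|_{L^p} \|Z_i p_t\|_{L^{p'}} \leq C t^{-Q/(2p) - 1/2} \|f\|_{L^p}$. For the inductive step I would factor $Z_I = Z_{i_1} Z_{I'}$ with $|I'| = a - 1$, apply (b.5) to $Z_{i_1}$ acting on $Z_{I'} p_t$, and invoke the transfer identity to obtain
$$f * Z_I p_t = \sum_{j_1=1}^N (Z_{j_1} f) * (\beta_{j_1, i_1} \cdot Z_{I'} p_t).$$
Next, expanding $Z_{j_1}^R$ as a linear combination of products of $\alpha_{j_1}$ first-layer right-invariants (as in the paper's proof of Theorem 3.1, Step 1), applying the transfer identity iteratively to move $\min(a-1,\alpha_{j_1})$ of these onto $f$, and using left-right commutativity $[Z_i, Z_j^R] = 0$ together with the Leibniz rule against $\beta_{j_1, i_1}$ (homogeneous of degree $\alpha_{j_1} - 1$), produces terms of the shape $(Z_K f) * (\phi \cdot Z_J p_t)$. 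When $\alpha_{j_1} < a - 1$ I iterate the same procedure on the inner left-invariant derivatives of $Z_{I'}$, invoking the inductive hypothesis; every surviving term eventually attains the structure $|K| = a - 1$, $\phi$ homogeneous of degree $k$, and $|J| = k + 1$.

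For each such term, Young's inequality together with Lemma B.1 yields
$$\|(Z_K f) * (\phi Z_J p_t)\|_{L^\infty} \leq \|Z_K f\|_{L^p} \|\phi Z_J p_t\|_{L^{p'}} \leq C \|f\|_{S^{a-1,p}} \cdot t^{Q/(2p') + (k - Q - (k+1))/2} = C t^{-Q/(2p) - 1/2} \|f\|_{S^{a-1,p}},$$
completing the inductive step.

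The hard part is the combinatorial bookkeeping: ensuring that after all iterations the decomposition consists only of terms with the matched structure $|K| = a - 1$, $|J| = k + 1$. The key structural principle is that every operation---(b.5), Leibniz, and the transfer identity---preserves the homogeneous degree of the total operator under the dilation $D(\lambda)$. Since the original $Z_I$ has homogeneous degree $a$, the identity $|K| + |J| - k = a$ holds for every surviving term, and the constraint $|K| \leq a - 1$ (required to control $Z_K f$ by $\|f\|_{S^{a-1,p}}$) then forces $|K| = a - 1$ and $|J| = k + 1$ uniformly. A subsidiary technical point is that the mixed left--right derivatives of $p_t$ arising from left-right commutation satisfy the same Gaussian-type bound as in Theorem \ref{2.4}, so that Lemma \ref{lemma b.1} applies verbatim; this follows from the symmetry $p_t(x^{-1}) = p_t(x)$.
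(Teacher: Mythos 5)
Your argument is correct and is essentially the paper's own proof: the base case $a=1$ is Young's inequality plus the kernel bound, and for $a\geq 2$ one transfers $a-1$ first-layer derivatives onto $f$ via $(Z_if)*g=f*(Z_i^Rg)$, the identity \eqref{b.5}, and the factorization of $Z_j^R$ into first-layer right-invariant fields, then bounds each surviving kernel-side term (a homogeneous coefficient times $Z_Jp_t$, with net homogeneity $-1$) by Lemma \ref{lemma b.1}; the paper writes out $a=2$ explicitly and asserts the general case, while you organize the same decomposition as an induction with homogeneity bookkeeping. The only point to adjust is your first displayed identity, which places the full (possibly higher-layer) derivative $Z_{j_1}$ on $f$: it should be bypassed in favor of the refinement you describe immediately afterwards, since $\norm{Z_{j_1}f}_{L^p}$ is not controlled by $\norm{f}_{S^{a-1,p}(\mathbb{G})}$ when $\alpha_{j_1}>a-1$.
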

\begin{proof}
Using the heat kernel estimate \eqref{heat} and the convolution inequality, \eqref{b.2} immediately follows when $a=1$.  Key idea of the proof when $a\geq 2$ is transferring  the directional derivative $Z_I$ from $p_t$ to $f$. We follow the strategy used in the proof of Theorem \ref{3.1}, and use the same notations $Z^R_i$ , $\beta_{ji}$, $Z_{jl}$, $Z_{jI}^R$. 
For each $1\leq i\leq m$ and any smooth functions $g$,
\begin{align} \label{b.3}
f * Z_ig &= f *(\sum_{j=1}^N Z^R_{j} (\beta_{ji}g))=\sum_{j=1}^N \sum_{l,I} Z_{jl}f * (Z^R_{jI} (\beta_{ji} g)).
\end{align}
Let us first prove \eqref{b.2} when $a=2$, and assume that $Z_I=Z_{i_1}Z_{i_2}$, $1\leq i_1,i_2\leq m$. If we denote $p'$ by the conjugate exponent of $p$, then applying the convolution inequality to \eqref{b.3}, 
\begin{align}\label{b.4}
\norm{f*Z_Ip_t}_{L^\infty(\mathbb{G})} =\norm{f*Z_{i_1}(Z_{i_2}p_t)}_{L^\infty(\mathbb{G})}  \leq C\norm{f}_{S^{1,p}(\mathbb{G})}\sum_{j,l,I}  \norm{Z^R_{jI} (\beta_{ji} Z_{i_2}p_t)}_{L^{p'}(\mathbb{G})}.
\end{align}
Note that each $Z_i^R$, $1\leq i\leq m$, can be written as
\begin{align*}
Z_i^Ru = \sum_{j=1}^N Z_j(\gamma_{ji}u)
\end{align*}
for some homogeneous functions $\gamma_{ji}$ of degree $\alpha_j-1$ ($1\leq j\leq N$). Thus, using this fact and applying the product rule to  $Z^R_{jI} (\beta_{ji} Z_{i_2}p_t)$, one can conclude that $Z^R_{jI} (\beta_{ji} Z_{i_2}p_t)$ can be written as the (finite) sum of
$h_k Z_{I_k} p_t$'s
for some homogeneous functions $h_k$ of degree $k-1$ and $|I_k| = k$. Note that according to Lemma  \ref{lemma b.1}, each term $\norm{h_k Z_{I_k} p_t}_{L^{p'}(\mathbb{G})}$ is bounded by
\begin{align*}
 Ct^{\frac{Q}{2p'}+\frac{k-1-(Q+k)}{2}} = Ct^{-(\frac{Q}{2p}+\frac{1}{2})}.
\end{align*}
Thus, combining this with \eqref{b.4}, we obtain \eqref{b.2} in the case $a=2$.

The aforementioned argument works for $a>2$ as well. In fact,  differentiating  \eqref{b.3}  in $Z_i$  directions  ($1\leq i\leq m$)
and then using \eqref{b.5}, we can deduce that
\begin{align*}
\norm{f*Z_Ip_t}_{L^\infty(\mathbb{G})}  \leq C\norm{f}_{S^{l-1,p}(\mathbb{G})} \norm{h}_{L^{p'}(\mathbb{G})},
\end{align*}
where $h$ is the sum of finitely many $h_k Z_{I_k} p_t$'s
for some homogeneous functions $h_k$ of degree $k-1$ and $|I_k| = k$. As mentioned above,  $\norm{h}_{L^{p'}(\mathbb{G})}\leq Ct^{-(\frac{Q}{2p}+\frac{1}{2})}$, which concludes the proof. 
\end{proof}

\section*{Acknowledgement}
The author thanks the advisor Fraydoun Rezakhanlou for sharing interesting ideas. Especially, author thanks  Michael Christ for  helpful discussions regarding Section 3.

\end{document}